\begin{document}
\def\fubao#1 {\fbox {\footnote {\ }}\ \footnotetext {From Fubao: #1}}
\def\chao#1 {\fbox {\footnote {\ }}\ \footnotetext {From Chao: #1}}
\def\fuke#1 {\fbox {\footnote {\ }}\ \footnotetext {From Fuke: #1}}

\makeatletter \@addtoreset{equation}{section}
\renewcommand{\thesection}{\arabic{section}}
\renewcommand{\theequation}{\thesection.\arabic{equation}}

\newcommand{\bdd}{\hspace*{-0.08in}{\bf.}\hspace*{0.05in}}
\def\para#1{\vskip 0.12\baselineskip\noindent{\em #1}}
\def\qed{\hfill$\qquad \Box$}

\newcommand{\bed}{\begin{displaymath}}
\newcommand{\eed}{\end{displaymath}}
\newcommand{\bea}{\bed\begin{array}{rl}}
\newcommand{\eea}{\end{array}\eed}
\newcommand{\disp}{\displaystyle}
\newcommand{\ad}{&\!\!\!\disp}
\newcommand{\aad}{&\disp}
\newcommand{\barray}{\begin{array}{ll}}
\newcommand{\earray}{\end{array}}
\newcommand{\beq}[1]{\begin{equation} \label{#1}}
\newcommand{\eeq}{\end{equation}}
\newcommand{\nd}{\noindent}

\def\rr {{\mathbb R}}
\newcommand{\R}{\mathbb R}
\def\ss{{\mathbb S}}
\def\zz{{\mathbb Z}}
\def\G{{\mathcal G}}
\def\d{{\mathrm{d}}}
\newcommand{\La}{\Lambda}
\newcommand{\la}{\lambda}
\newcommand{\al}{\alpha}
\newcommand{\sg}{\sigma}
\newcommand{\Ga}{\Gamma}
\newcommand{\ga}{\gamma}
\newcommand{\M}{{\mathscr M}}
\newcommand{\lf}{\lfloor}
\newcommand{\rf}{\rfloor}
\newcommand{\wdt}{\widetilde}
\newcommand{\wdh}{\widehat}
\newcommand{\E}{{\mathbb E}}
\newcommand{\bQ }{{\mathbb Q}}
\newcommand{\Q}{{\mathbb Q}}
\newcommand{\bP}{{\mathbb P}}
\newcommand{\PP}{{\mathscr P}}
\newcommand{\cadlag}{c\`adl\`ag}
\newcommand{\wrt}{with respect to }
\newcommand{\sgn}{\mathrm {sgn}}
\def\P{\mathbb P}
\newcommand{\fD}{\mathfrak D}
\newcommand{\LL}{{\mathcal L}}
\newcommand{\A}{{\mathcal A}}
\newcommand{\B}{{\mathcal B}}
\newcommand{\F}{{\mathcal F}}
\newcommand{\N}{{\mathcal N}}
\newcommand{\dif}{\nabla}
\newcommand{\NN}{{\mathbb N}}
\newcommand{\e}{{\varepsilon}}
\newcommand{\one}{\mathbf 1}
\newcommand{\m}{\mathbf m}
\def\tr{\hbox{tr}}
\def\cd{(\cdot)}
\newcommand{\lan}{\langle}
\newcommand{\ran}{\rangle}
\newcommand{\set}[1]{\left\{#1\right\}}

\newtheorem{Theorem}{Theorem}[section]
%[section] means numbering within sections
\newtheorem{Corollary}[Theorem]{Corollary}
%[Theorem] means sharing numbering system with theorem-environment

\newtheorem{Lemma}[Theorem]{Lemma}
\newtheorem{Note}[Theorem]{Note}
\newtheorem{Proposition}[Theorem]{Proposition}
\theoremstyle{definition}
\newtheorem{Definition}[Theorem]{Definition}
\newtheorem{Remark}[Theorem]{Remark}
\newtheorem{Example}[Theorem]{Example}
\newtheorem{Counterexample}[Theorem]{Counterexample}
\newtheorem{Assumption}[Theorem]{Assumption}

\parskip=3pt

\title% {\Large Stochastic Damping Hamiltonian Systems with State-Dependent Switching: Existence and Uniqueness, Strong Feller Property and Exponential Ergodicity
 {On Strong Feller Property, Exponential Ergodicity and Large Deviations Principle for Stochastic Damping Hamiltonian Systems with State-Dependent Switching
 \footnote{Supported in part by the National Natural Science Foundation of China under Grant Nos. 11671034 and 61873320, and the Simons Foundation Collaboration Grant (No. 523736).}}
\author{ Fubao Xi\thanks{School of Mathematics and Statistics, Beijing Institute of Technology, Beijing 100081, China,
xifb@bit.edu.cn.} \and Chao Zhu\thanks{Department of Mathematical
Sciences, University of Wisconsin-Milwaukee, Milwaukee, WI 53201, zhu@uwm.edu.} \and Fuke Wu\thanks{School of Mathematics and Statistics, Huazhong University of Science and Technology, Wuhan, Hubei 430074, China, wufuke@hust.edu.cn.}}

%%%%\date{{\small Revised MS of SICON-}}
\maketitle

\begin{abstract}
% This work focuses on a class of  stochastic damping Hamiltonian systems with state-dependent switching, where the switching process has a countably infinite state space. First, the existence and uniqueness of a global  weak solution is constructed by the martingale approach under the assumption that each subsystem possesses a unique weak solution. %{\it on the basis of that only weak solution to each single system is available}.
%   Then, the strong Feller property is proved by the killing technique together with the resolvent and transition probability identities. This approach relaxes the continuity assumption for the switching rates $q_{kl}(\cdot)$ commonly used in the literature to  being merely measurable. Next, exponential ergodicity is obtained under a Foster-Lyapunov drift condition. Finally, a  regime-switching van der Pol system is studied  for illustration.
{This work focuses on a class of  stochastic damping Hamiltonian systems with state-dependent switching, where the switching process has a countably infinite state space. After  establishing the existence and uniqueness of a global weak solution via the martingale approach under very mild conditions, the paper next proves the strong Feller property for  regime-switching stochastic damping Hamiltonian systems  by the killing technique together with some resolvent and transition probability identities. The commonly used continuity assumption for the switching rates $q_{kl}(\cdot)$ in the literature is relaxed to measurability in this paper. Finally the paper provides sufficient conditions for exponential ergodicity and large deviations principle  for  regime-switching   stochastic damping Hamiltonian systems. Several examples on  regime-switching van der Pol and (overdamped) Langevin systems  are studied in detail for illustration.}

\medskip

\noindent{\bf Key Words.} Stochastic Hamiltonian system, damping, regime-switching, martingale problem, strong Feller property, exponential ergodicity, large deviation principle.

\medskip

% \noindent{\bf Running Title.} Regime-Switching Stochastic Damping Hamiltonian Systems

% \bigskip

\noindent{\bf 2000 MR Subject Classification.} 60J60, 60J27, 34D25.
\end{abstract}

\section{Introduction}\label{I}

  We consider a  damping Hamiltonian system subject to {random perturbations}. More precisely, let $X(t) $ and $ Y(t) $ denote respectively the position and velocity of a particle moving in $\R^{d}$ at time $t\ge 0$. Suppose   $(X,Y)$ is governed by the following stochastic differential equation (SDE)
  \begin{equation}\label{XY}
\begin{cases}
{\d} X(t)=Y(t){\d} t,\\
{\d} Y(t) =-\bigl[c(X(t),Y(t),\La(t))Y(t)+\nabla_{x} V(X(t),\La(t))\bigr]{\d} t
  %\\ \aad \qquad \qquad \qquad \qquad \qquad \quad
  +\sg(X(t),Y(t),\La(t)){\d} B(t),
\end{cases}
\end{equation} where $B$ is a standard Brownian
motion in $\R ^d$,  and $\La$ is  a right-continuous
random process with a countably infinite state space $\ss :=\{1, 2, 3, \cdots \}$ such that
\begin{equation}\label{La} \bP \{\La(t+\Delta)=l |
\La(t)=k, (X(t),Y(t))=(x,y) \}= \begin{cases} q_{kl}(x,y) \Delta +o(\Delta),
& \hbox{if}\, \, k \ne l, \\
1+q_{kk}(x,y) \Delta +o(\Delta), & \hbox{if}\, \, k = l,\end{cases}
\end{equation}
uniformly in $\R ^{2d}$, provided $\Delta \downarrow 0$.
  %, where $0<q_{kl}(x,y)<+\infty$ for all $k \ne l \in \ss$.
 % Let us give a brief explanation about the system $(X,Y,\La)$ given by (\ref{XY}) and (\ref{La}). In this system,
The matrix $Q(x,y):=\bigl(q_{kl}(x,y)\bigr)_{k,l\in\ss}$ is the formal generator of the switching  process $\La$.
In \eqref{XY},   the matrix-valued function $c(x,y,k)$ is  the damping coefficient and $-c(x,y,k)y$ describes the damping force, the function $V(x,k)$ is  the potential function and $-\nabla V(x,k)$ is the force driven by the potential, and the matrix-valued function $\sg(x,y,k)$ describes the strength of the random perturbation.

 Note that in addition to the dependence on $x$ and $y$, the functions $V$, $c,$ and $\sg$ also depend on the discrete component $k\in \ss$;
  the motivation for such a formulation will be explained shortly.
  When they are independent of $k\in \ss$, or equivalently in the special case when $\ss$ is a singleton set, \eqref{XY} reduces to the usual stochastic damping Hamiltonian system \begin{equation}\label{xy}
\begin{cases}
{\d} x(t) =y(t) {\d} t,\\
{\d} y(t) =-\bigl[c(x(t),y(t))y(t)+\nabla V(x(t))\bigr]{\d} t+\sg(x(t),y(t)){\d} B(t).
\end{cases} \end{equation} With different choices for the damping coefficient $c$ and the potential function $V$, the model \eqref{xy}  covers the generalized Duffing oscillator
($c(x,y)\equiv c>0$ and $V(x)$ is a lower bounded polynomial) and the van der Pol oscillator ($c(x,y)=x^{2}-1$,
$V(x)=\frac{1}{2}\omega^{2}_{0}x^{2}$).
In some special situation, the Li\'enard oscillator or Li\'enard equation ($c(x,y)=f(x)$ and $V(x)=\int^{x}_{0}g(u){\d} u$
with $f$ and $g$ being appropriate continuously differentiable functions on $\R $) can also be regarded as a Hamiltonian system.
  Hamiltonian systems have a wide range of applications and are    commonly  used as models
for virtually all fields of  mechanics and  physics.
  % Especially,
  For example,  the   Duffing equation is often used to model  a periodically forced oscillator with a nonlinear elasticity. The  van der Pol equation has a long history of being used
in both   physical and biological sciences. % In recent decades, growing attention has been attracted to the investigation of stochastic Hamilton systems.
Hamiltonian systems subject to random perturbations are particularly interesting as many real mechanical and physical systems are   unavoidably influenced by random noises. In addition,   they present many interesting and challenging mathematical problems.
In recent decades, growing attention has been attracted to the investigation of stochastic Hamilton systems.
 We refer to % Such a system and its variants have been extensively studies in the literature; see
 \cite{Wu-01,Talay-02,EckmPR-99,Zhang-10,EckmH-00,Carmona-07,EckmPR-99b} and the references therein for studies of \eqref{xy} and its variants.

This paper aims to study stochastic Hamiltonian systems living in random environments \eqref{XY}--\eqref{La}.
The rationale is that  the potential function, damping force, and perturbations may change randomly and abruptly, resulting {\em structural changes} for the Hamiltonian system in many applications. Consider, for instance,    nonlinear vibration systems under random excitation, particles or electromagnetic waves propagate through  different media,   etc. These variations  can have important effects on the mechanical dynamic systems.
This leads us to % consider stochastic Hamiltonian systems living in random environments \eqref{XY}--\eqref{La}. In this
the  formulation  \eqref{XY}--\eqref{La},
% More precisely,    we consider a stochastic Hamiltonian system with both damping and continuous-state-dependent switching defined by $(X,Y,\La)$, where
in which  the continuous components $X(t)$ and $Y(t)$  respectively describe the position and velocity  of a physical system moving in $\R^{d}$ at time $t\ge 0$, whereas the discrete component $\La(t)$ models the randomly changing mechanical regimes (or environments) at time $t\ge 0$.
  % The continuous components $(X,Y)$ satisfy the following stochastic differential equation:
   Compared with \eqref{xy},  the hybrid system setup using a switching process adds another dimension of sophistication to the problem formulation. It allows to describe the random environment that is otherwise not representable by the traditional differential equations. Indeed, compared with \eqref{xy}, the  model  \eqref{XY}--\eqref{La}  is more versatile and has a wider range of applicability. Nevertheless, owing to the addition of the state-dependent switching with countably many switching states in  \eqref{XY}--\eqref{La}, the analysis  is more involved and complicated.

 The system \eqref{XY}--\eqref{La} can also be interpreted as a system of weakly coupled oscillators indexed by $k\in \ss= \{ 1,2,\dots\}$. At time $t=0$, only one oscillator, say, $i\in \ss$, is active, whose position $X^{(i)}(t)$ and velocity $Y^{(i)}(t) $ are described  by the SDE   \begin{align*}
\begin{cases}
{\d} X^{(i)}(t)=Y^{(i)}(t){\d} t,\\
{\d} Y^{(i)}(t) =-\bigl[c(X^{(i)}(t),Y^{(i)}(t),i)Y^{(i)}(t)+\nabla_{x} V(X^{(i)}(t),i)\bigr]{\d} t
    % \\\hfill
    +\sg(X^{(i)}(t),Y^{(i)}(t), i){\d} B(t).
\end{cases}
\end{align*} After  a random amount of time, the oscillator $i$ becomes dormant and another oscillator, say, $j\neq i$, becomes active. The position $X^{(j)}(t)$ and velocity $Y^{(j)}(t) $ of oscillator $j$ are described by the SDE \begin{align*}
\begin{cases}
\!{\d} X^{(j)}(t)=Y^{(j)}(t){\d} t,\\
\!{\d} Y^{(j)}(t) =-\bigl[c(X^{(j)}(t),Y^{(j)}(t),j)Y^{(j)}(t)+\nabla_{x} V(X^{(j)}(t),j)\bigr]{\d} t
   %\\\hfill
   +\sg(X^{(j)}(t),Y^{(j)}(t), j){\d} B(t).
\end{cases}
\end{align*} The  oscillator $j$ will  stay  active for another random amount of time until it becomes dormant and another oscillator becomes active.  And so on.  The former generator $Q(x,y)$ provides the switching mechanism between the activations of the oscillators. Note that this model description is   in the same spirit as but
 different from those in \cite{Carmona-07,EckmPR-99,EckmH-00,EckmPR-99b}, in which a  chain of a finite number of oscillators in contact with two heat baths is studied.

 The system \eqref{XY}--\eqref{La} belongs to the class of regime-switching diffusions or hybrid diffusions. Owing to their ability to delineate complex   systems subject to various stochastic perturbations, regime-switching diffusions have received growing attentions recently. Some of the representative works can be found in \cite{MaoY} and \cite{YZ-10}. The former dealt with regime-switching diffusions in which the switching process is a continuous-time Markov chain independent of the Brownian motion, whereas the latter treated  processes in which the switching component depends on the continuous-state component.

It is important to construct appropriate solution to
the system \eqref{XY}--\eqref{La}.
% We know from \cite{Wu-01} that {\red for the general case,} the stochastic damping Hamiltonian system \eqref{xy} has only weak solution.
%\chao{I think this sentence is not quite accurate. In case when  $c$ and $\nabla V$ are nice,  for example, those in Example \ref{example-6.4}, then  \eqref{XY}-\eqref{La} has a unique strong solution. Of course we do not need to assume local Lipschitz condition for $c$ and $\nabla V$ for the general system \eqref{XY}-\eqref{La}  and therefore the weak formulation is more appropriate.}
% \fubao{I added `{\red for the general case}'. Please revise this sentence to be good and accurate.}
 Instead of the strong solution formulation, which usually requires restrictive conditions such as local Lipschitz continuity and linear growth conditions on the coefficients of \eqref{XY}--\eqref{La}, we will
 establish the solution to the system  \eqref{XY}--\eqref{La} in the
weak sense. More precisely, using the martingale problem machinery together with the related results in \cite{Wu-01},
 we will show that the system \eqref{XY}--\eqref{La}
has a weak solution and that the weak solution is unique in the sense of
probability law under fairly mild conditions (Assumption \ref{sgcvq}). %  and \ref{Assumption-q-ratio-bdd}).
 Our motivation stems from the fact that in many interesting and commonly used Hamiltonian systems, the   damping coefficient $c$ and  the potential function $V$ can be very rough and may not satisfy the  local Lipschitz continuity and linear growth conditions at all. Indeed, our formulation imposes minimal conditions on   $c$,   $V$, and the formal generator $Q(x,y)$ of the $\La$ process:   $c $ and $\nabla V$  are merely continuous, and $Q(x,y)$ is just bounded and measurable. % Also,  one can see below that the martingale approach is appropriate to characterize the weak solution to the system \eqref{XY}-\eqref{La}.

% \chao{Shall we rearrange the following paragraphs to Section \ref{PS}?}
% \fubao{Yes, to be in good order, we shall rearrange the following formulation to Section \ref{PS}.}

 Thanks to the state-dependent switching mechanism specified in \eqref{La}, the components $(X,Y)$ and $\La$ are intertwined and correlated. It is difficult to establish the existence and uniqueness for the martingale solution to the operator $\A$ of \eqref{A} directly. We will first look at the special  case when the switching rates of $\La$ are independent of the state $(X,Y)$. More  precisely, we first consider the case when the component $\La$ is generated  by the constant $Q$-matrix given by \eqref{eq-Q-hat}; consequently the generator $\A$  takes the special form $\wdh \A$ in \eqref{eq-A-hat}. Since $\La$ is independent of $(X,Y)$, we can derive a unique martingale solution $\wdh\P$ to  $\wdh \A$ by piecing-together the martingale solutions to $\{\LL_{k}, k\in \ss\}$ between the switching times $\{ \tau_{n}, n\ge 1\}$ of $\La$. The details are spelled out in Theorem \ref{specialsolution}. With this $\wdh\P$,  we use a carefully designed exponential martingale $M$ of \eqref{Mtmartingale} to obtain a martingale solution to the operator $\A$ and further show that the martingale solution is unique; establishing that the martingale problem for $\A$ is well-posed in Theorem \ref{thm-general}.

We next investigate the strong Feller property and exponential ergodicity for the system
\eqref{XY}--\eqref{La}. Note that the diffusion matrix of \eqref{XY} is degenerate. %This creates much difficulty in establishing the strong Feller property.
 Uniform ellipticity is a standard assumption  to establish strong Feller property in the literature; see, for example, \cite{XiZ-17,XiZ-18,YZ-10,PriolaW-06} and the references therein. We also note that the coupling method and/or related  results from non-degenerate  partial differential equations are the primary tools in  the aforementioned papers    to establish the strong Feller property. In this paper,  we use a different approach  to establish the strong Feller property for the regime-switching diffusion process $(X,Y,\La)$. More exactly,  using the killing technique,   the resolvent and transition probability identities, we prove that under Assumption \ref{sgcvq},
  % and \ref{Assumption-q-ratio-bdd},
  the system  \eqref{XY}--\eqref{La} is strong Feller  in Theorem \ref{thm-sFeller}. With the additional assumption that $Q(x,y)$ is irreducible (see Assumption \ref{Qirreducible} for the precise statement), we further obtain in Theorem \ref{thm-exp-ergodicity} the exponential ergodicity for  the system  \eqref{XY}--\eqref{La} under a Foster-Lyapunov drift condition. {This, in turn,  leads to a set of sufficient conditions in terms of the potential, damping coefficients and the switching rates of the system \eqref{XY}--\eqref{La} for exponential ergodicity in Theorem \ref{thm2-exp-erg}.}  The Donsker and Varadhan levels 2 and 3  large deviations principle for  the system  \eqref{XY}--\eqref{La}  are discussed in  Proposition \ref{prop-LDP}. Finally, in Example \ref{example-6.4} we study a stochastic van der Pol system in random environments for illustration. Example \ref{new-exm} studies a regime-switching overdamped Langevin system, which demonstrates that even some subsystems do not satisfy the large deviations principle, the overall system does satisfy the large deviations principle due to regime switching.

To facilitate the later presentation, we introduce some frequently used notations here. For $z=(x,y) \in \R^{2d}$, let
$|z|=|(x,y)|=\sqrt{|x|^{2}+|y|^{2}}$, where $|x|^{2}= \sum_{i=1}^{d}x_{i}^{2}  $. Define a metric $\lambda
(\cdot,\cdot)$ on $\R ^{2d} \times \ss$ as
$$\lambda \bigl((x,y,m), (\wdt{x},\wdt{y},\wdt{m})\bigr)=|(x,y)-(\wdt{x},\wdt{y})|+   \one_{\{m\neq\wdt{m}\}}. $$
%where $d(\cdot,\cdot)$ is the discrete metric.
Let ${\cal B}(\R ^{2d}
\times \ss)$ be the Borel $\sigma$-algebra on $\R ^{2d} \times \ss$.
Then $(\R ^{2d} \times \ss, \lambda (\cdot,\cdot), {\cal B}(\R ^{2d}
\times \ss))$ is a locally compact and separable metric space. As
usual, let $C([0,\infty), \R ^{2d})$ be the continuous function space
endowed with the sup norm topology and $D([0,\infty), \ss)$ be the c\`{a}dl\`{a}g space endowed with the Skorohod topology. Moreover, let $\Omega:=C([0,\infty),\R ^{2d})\times D([0,\infty),\ss)$ be endowed with the product topology of the sup norm topology on $C([0,\infty),\R ^{2d})$ and the Skorohod topology on $D([0,\infty),\ss)$.
% For $\omega \in \Omega$, let $\omega (t) =(X(t),Y(t),\La(t))$ be the coordinate process.
 Let ${\F}_{t}$ be the $\sg$-field generated by the cylindrical sets on $\Omega$ % $C([0,\infty),\R ^{2d})\times D([0,\infty),\ss)$
 up to time $t$ and set
${\F}=\bigvee_{t=0}^{\infty} {\F}_{t}$. Next, let $C^{\infty}_{c}(\R ^{2d}\times \ss)$ denote the family of functions defined on $\R^{2d}\times \ss$
such that $f(\cdot,k) \in C^{\infty}_{c}(\R ^{2d})$ for each $k \in \ss$, and $f(x, y, \cdot)$ is a bounded function on $\ss$ for each $(x, y)\in \R^{2d}$, where $C^{\infty}_{c}(\R ^{2d})$ denotes the family of
functions defined on $\R ^{2d}$ which are infinitely differentiable and have compact support.

\section{Preliminary results}\label{PS}
We recall the notion of martingale problem for the generator $\A$ corresponding to   the system \eqref{XY}--\eqref{La} in this section. After stating the standing assumption, we next  collect some preliminary results in this section. These preliminary results  will be crucial for our later developments.

%  Now we proceed to consider the martingale problem for the operator $\A$ defined in (\ref{A}). %  on $(\Omega,\F,\{\F_{t}\}_{t\ge 0})$.

For each $f\in C^{\infty}_{c}(\R ^{2d}\times \ss)$, we define the following operator corresponding to   the system \eqref{XY}-\eqref{La}:
\begin{equation}\label{A}
\A f(x,y,k):=\LL_{k}f(x,y,k)+Q(x,y)f(x,y,k).
\end{equation}
Here, for each $k \in \ss$, ${\LL}_{k}$ is a differential operator defined as follows:
\begin{equation}\label{L}
\begin{array}{ll}
{\LL}_{k} f(x,y,k)\ad :=\frac {1}{2}\hbox{tr}\bigl(a(x,y,k)\nabla^{2}_{y} f(x,y,k)\bigr)+\langle y, \nabla_{x} f(x,y,k)\rangle\\
\aad \qquad -\langle c(x,y,k)y+\nabla_{x}V(x,k), \nabla_{y}f(x,y,k)\rangle,
\end{array}
\end{equation} and the switching operator $Q(x,y)$ is defined as follows:
\begin{equation}\label{Q(x,y)}
Q(x,y)f(x,y,k): =\sum_{l \in \ss} q_{kl}(x,y) \bigl(f(x,y,l)-f(x,y,k)\bigr).
\end{equation}
Here and hereafter,  $a(x,y,k)=\sg(x,y,k)\sg(x,y,k)^{T}$,  $\nabla$ and $\nabla^{2}$ denote respectively the gradient and the Hessian matrix of functions with respect to the corresponding variable, and $\langle \cdot , \cdot \rangle$ denotes the inner product in $\R^{d}$. Moreover, if $A$ is a vector or matrix, $A^{T}$ denotes its transpose.

\begin{Definition}\label{defn-mg-A}  For a given $(x,y,k)\in
\R ^{2d} \times \ss$, we say a probability measure $\bP^{(x,y,k)}$ on
$C([0,\infty),\R ^{2d})\times D([0,\infty),\ss)$ is a solution to the martingale problem for the operator $\A$ starting from $(x,y,k)$, if
$\bP^{(x,y,k)}((X(0),Y(0),\La(0))=(x,y,k))=1$ and for each function $f \in
C^{\infty}_{c}(\R ^{2d}\times \ss)$, \begin{equation}\label{martingale1}
M_{t}^{(f)}:=f(X(t),Y(t),\La(t))-f(X(0),Y(0),\La(0))-\int_{0}^{t} \A f(X(s),Y(s),\La(s)){\d} s \end{equation}
is an $\{\F_{t}\}$-martingale with respect to $\bP^{(x,y,k)}$, where $(X,Y,\La)$ is the coordinate process defined by $(X(t,\omega),Y(t,\omega), \La(t,\omega)) = \omega(t)\in \R ^{2d}\times \ss$ for all $t \ge 0$ and $\omega\in \Omega$. Sometimes, we simply say
that the probability measure $\bP^{(x,y,k)}$ is a martingale solution
for the operator $\A$ starting from $(x,y,k)$ or a weak solution to the system \eqref{XY}--\eqref{La} with initial data $(x,y,k)$.
\end{Definition}

%\begin{Definition}\label{wsolution} {For a given $(x,y,k)\in \R ^{2d} \times \ss$, we call a martingale solution $\P^{(x,y,k)}$ for the operator $\A$ starting from $(x,y,k)$ as a  weak solution to the system (\ref{XY}) and (\ref{La}) with initial data $(x,y,k)$.} \end{Definition}

For the existence and uniqueness of the weak solution to system \eqref{XY}  and  \eqref{La}, we make the following standing assumption throughout the paper.

\begin{Assumption} \label{sgcvq}
For each $k\in \ss$, we assume that
\begin{itemize}\parskip=1pt
\item[{(i)}] the potential function $V(\cdot,k)$ is lower bounded and continuously differentiable over $\R ^{d}$;
\item[{(ii)}] the damping coefficient $c(\cdot,\cdot,k)$ is continuous and for all $N>0$: $$\sup\{\|c(x,y,k)\|_{\hbox{\tiny{H.S.}}}: |x|\le N, y\in \R ^{d}\}<\infty,$$ and there exist $c$, $L>0$ such that
    $$c^{s}(x,y,k)\ge cI>0 \,\, \hbox{for all}\,\, |x|>L \,\, \hbox{and}\,\, y\in \R ^{d};$$
\item[{(iii)}] the random perturbation $\sg(\cdot,\cdot,k)$ is symmetric, infinitely differentiable and for some $\hat{\sg}>0$: $0<\sg(x,y,k)\le \hat{\sg}I$ over $\R ^{2d}$, where $I$ is the $d$-dimensional identity matrix;
\item[{(iv)}] the formal generator of the switching process $Q(x,y):=\bigl(q_{kl}(x,y)\bigr)$ is
a matrix-valued measurable function
on $\R ^{2d}$ such that   for all $(x,y) \in  \R ^{2d}$,
$q_{kl}(x,y) \ge 0$ for $k \ne l$ and % $\sum_{l \in \ss} q_{kl}(x,y)=0$,
 $q_{kk}(x,y)=-\sum_{l \in \ss\setminus\{k\}} q_{kl}(x,y)\le 0$ %  and $|q_{kk}(x,y)| \le H<\infty$
 for all $(x,y)\in \R ^{2d}$ and $k \in \ss$.  In addition, there exists a constant  $H>0$ such that \begin{equation}\label{eq-Q-new-cond}
\sup_{k\in \ss} \sum_{l\in \ss\setminus\{k\}} \sup_{(x,y)\in \R^{2d}} q_{kl}(x,y) \le H.
\end{equation}
\end{itemize} Here $c^{s}(x,y,k)= \frac12(c(x,y,k) + c^{T}(x,y,k))$ is the symmetrization of the matrix $c(x,y,k)$,
   % given by $\bigl(\frac{1}{2}(c_{ij}(x,y,k)+c_{ji}(x,y,k))\bigr)$,
   $\|\cdot\|_{\hbox{\tiny{H.S.}}}$ is the Hilbert-Schmidt norm of matrix, the order relation on symmetric matrices is the usual one defined by the definite non-negativeness; and $\sg>0$ means that $\sg$ is strictly positive definite.\end{Assumption}

For each $k\in \ss$, let $Z^{(k)}(t):=(X^{(k)}(t),Y^{(k)}(t))$ satisfy the following stochastic differential equation
\begin{equation}\label{XYk}
\begin{cases}
{\d} X^{(k)}(t) =Y^{(k)}(t) {\d} t,\\
{\d} Y^{(k)}(t)  =-\bigl[c(X^{(k)}(t),Y^{(k)}(t),k)Y(t)+\nabla_{x} V(X^{(k)}(t),k)\bigr]{\d} t\\ \hfill \qquad \qquad \qquad \qquad \qquad \quad +\sg(X^{(k)}(t),Y^{(k)}(t),k){\d} B(t).
\end{cases}  \end{equation}
% {\blue and let $Z^{(k)0}(t):=(X^{(k)0}(t),Y^{(k)0}(t))$ \chao{Do we need to use \eqref{XYk0} later on? I think this is the basic sde used in \cite{Wu-01}.} \fubao{No, we do not use \eqref{XYk0} later on. We only use \eqref{XYk0} just here to give some explanation of the works in \cite{Wu-01}. Actually, we can delete two blue sentences here and the word `also' below. Moreover, we can simplify the explanation here too.} satisfy the following stochastic differential equation
% \begin{equation}\label{XYk0} \begin{cases} {\d} \wdh{X}^{(k)0}(t) =\wdh{Y}^{(k)0}(t) {\d} t,\\ {\d} \wdh{Y}^{(k)0}(t) =\sg(\wdh{X}^{(k)0}(t),\wdh{Y}^{(k)0}(t),k){\d} B(t). \end{cases}\end{equation} Note that for each $k\in \ss$, stochastic differential equation (\ref{XYk0}) has a unique strong solution which is non-explosive, and the corresponding diffusion is hypoelliptic.}
Note  that for each $k\in \ss$, the diffusion corresponding to stochastic differential equation (\ref{XYk}) is degenerate, and that the coefficients $\nabla_{x} V(x,k)$ and $c(x,y,k)$ are only continuous but not smooth. Besides, $\nabla_{x} V(x,k)$ and $c(x,y,k)$ perhaps satisfy neither the linear growth nor the Lipschitz conditions. Meanwhile, the hypoellipticity need not hold for (\ref{XYk}), and the existence and uniqueness of solution and the strong Feller property of the corresponding Markov process are not obvious. Nevertheless, by virtue of the Girsanov formula, the Dunford-Pettis theorem and the Egorov lemma, the following two basic but very important lemmas were proved \cite{Wu-01}.

\begin{Lemma} \label{Wu-Lem1.1}
For each $k \in \ss$ and for each initial state $z=(x,y)\in \R ^{2d}$, the stochastic differential equation \eqref{XYk} admits a unique weak solution $\bP_{k}^{(z)}$, a probability measure on the space $C([0,\infty), \R ^{2d})$, and this solution is non-explosive. \end{Lemma}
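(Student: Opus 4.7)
The plan is to construct $\bP_{k}^{(z)}$ by a Girsanov reduction to a drift-free reference Hamiltonian SDE, combined with a localization that simultaneously handles the potentially unbounded Hamiltonian drift $-c(x,y,k)y-\nabla_{x}V(x,k)$ and the non-uniform lower bound on $\sg(\cdot,\cdot,k)$. To begin, consider the auxiliary reference SDE
\[
d\bar X(t)=\bar Y(t)\,dt,\qquad d\bar Y(t)=\sg(\bar X(t),\bar Y(t),k)\,d\bar B(t)
\]
starting from $z=(x,y)$. By Assumption \ref{sgcvq}(iii), $\sg(\cdot,\cdot,k)$ is smooth and globally bounded by $\hat\sg I$, so standard Stroock--Varadhan theory yields a unique non-explosive weak solution $\bar\bP_{k}^{(z)}$: the velocity $\bar Y$ cannot blow up in finite time because $\int_{0}^{\cdot}\sg\,d\bar B$ has bounded quadratic-variation density, and then $\bar X(t)=x+\int_{0}^{t}\bar Y\,ds$ is automatically non-explosive.

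Next, set $b(x,y,k):=-c(x,y,k)y-\nabla_{x}V(x,k)$, $\eta(x,y,k):=\sg(x,y,k)^{-1}b(x,y,k)$, and $\tau_{N}:=\inf\{t\ge 0:|\bar X(t)|+|\bar Y(t)|\ge N\}$. On $\{t\le\tau_{N}\}$, Assumption \ref{sgcvq}(ii) makes $\|c\|_{\hbox{\tiny{H.S.}}}$ bounded by some $C_{N}$, while continuity and pointwise positive definiteness of $\sg$ make $\sg^{-1}$ bounded on $\{|x|+|y|\le N\}$; hence $\eta$ is bounded on the stochastic interval $[0,\tau_{N}]$. Novikov's criterion then makes
\[
M_{N}(t):=\exp\Bigl(\int_{0}^{t\wedge\tau_{N}}\eta^{T}\,d\bar B-\tfrac{1}{2}\int_{0}^{t\wedge\tau_{N}}|\eta|^{2}\,ds\Bigr)
\]
a uniformly integrable martingale, and setting $d\bP_{k}^{(z),N}/d\bar\bP_{k}^{(z)}:=M_{N}$, Girsanov's theorem shows that the canonical process solves \eqref{XYk} up to $\tau_{N}$ under $\bP_{k}^{(z),N}$. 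The family $\{\bP_{k}^{(z),N}\}_{N\ge 1}$ is consistent on the filtration $\{\F_{\tau_{N}}\}$ and therefore extends to a unique probability $\bP_{k}^{(z)}$ on $(\Omega,\F)$.

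For non-explosion the Hamiltonian energy $W(x,y,k):=\frac{1}{2}|y|^{2}+V(x,k)-\inf_{x}V(x,k)$ is crucial: a direct computation gives
\[
\LL_{k}W(x,y,k)=\tfrac{1}{2}\hbox{tr}(a(x,y,k))-\langle c^{s}(x,y,k)y,y\rangle,
\]
so that the potentially large force $\nabla_{x}V$ cancels. Combining $\hbox{tr}(a)\le d\hat\sg^{2}$ with $\langle c^{s}y,y\rangle\ge c|y|^{2}$ on $\{|x|>L\}$ and the local estimate $|\langle c^{s}y,y\rangle|\le C_{L}|y|^{2}$ on $\{|x|\le L\}$ yields $\LL_{k}W\le C(1+W)$. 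Itô's formula, Doob's maximal inequality applied to the local martingale $\int_{0}^{\cdot}\langle Y,\sg\,dB\rangle$, and Gronwall's lemma then produce $\E[\sup_{s\le t}W(X(s\wedge\tau_{N}),Y(s\wedge\tau_{N}),k)]\le C(t,z)$ uniformly in $N$. Since $|Y|^{2}\le 2W$ and $|X(t)|\le|x|+\int_{0}^{t}|Y|\,ds$, this controls $\sup_{s\le t}(|X|+|Y|)(s\wedge\tau_{N})$ uniformly in $N$, and Markov's inequality gives $\bP_{k}^{(z)}(\tau_{N}\le t)\to 0$, establishing non-explosion.

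For uniqueness, any other weak solution $\bP'$ of \eqref{XYk} can be transformed back by the reciprocal density $M_{N}^{-1}$ (still bounded on $[0,\tau_{N}]$) into a weak solution of the reference SDE up to $\tau_{N}$; by uniqueness of the reference SDE this must coincide with $\bar\bP_{k}^{(z)}$ there, so $\bP'|_{\F_{\tau_{N}}}=\bP_{k}^{(z)}|_{\F_{\tau_{N}}}$, and $\tau_{N}\uparrow\infty$ forces $\bP'=\bP_{k}^{(z)}$. The main technical hurdle I anticipate is precisely the Girsanov step: since $\sg$ is not uniformly bounded below, the densities $M_{N}$ need not converge in $L^{1}(\bar\bP_{k}^{(z)})$ without extra care. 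The Dunford--Pettis theorem is the natural tool to extract a weakly convergent $L^{1}$ subsequence of $\{M_{N}\}$ and identify its limit as a bona fide Radon--Nikodym density (so that $\bP_{k}^{(z)}$ is a probability rather than a sub-probability measure), and Egorov's lemma supplies the almost-uniform convergence on the exhausting events $\{\tau_{N}\ge t\}$ needed to identify the localized stochastic integrals with a global Girsanov exponent in the limit.
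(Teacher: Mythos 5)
Your argument is essentially correct, and it follows the route the paper itself points to: the paper does not prove this lemma but quotes it from Wu (2001), stating only that the proof there rests on the Girsanov formula together with the Dunford--Pettis theorem and the Egorov lemma. Your reconstruction — reduce to the driftless kinetic SDE, apply Girsanov locally on $[0,\tau_N]$ where $\sg^{-1}b$ is bounded, glue the consistent family $\{\bP_k^{(z),N}\}$, and rule out explosion with the energy function $W=\frac12|y|^2+V-\inf V$ — is sound; in particular you correctly notice that $W$ is not coercive in $x$ (since $V$ is only lower bounded), so that the plain Khasminskii test fails and one must instead bound $\E[\sup_{s\le t}W]$ via BDG/Doob and then recover $|X|$ from $|X(t)|\le|x|+\int_0^t|Y|\,\d s$. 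Two small points of packaging: for the reference SDE the diffusion matrix of the full $(\bar X,\bar Y)$ system is degenerate, so classical Stroock--Varadhan uniqueness does not apply directly; what you actually use is that $\sg(\cdot,\cdot,k)$ is $C^\infty$, hence locally Lipschitz, so pathwise uniqueness plus Yamada--Watanabe gives well-posedness up to the (easily excluded) explosion time. And your closing paragraph overstates the role of Dunford--Pettis and Egorov for this particular statement: once $\bP_k^{(z),N}(\tau_N\le t)\to 0$ is established by the Lyapunov bound, the consistent-family extension already yields a genuine probability measure, with no need for $L^1$-weak compactness of the densities $M_N$; those tools are really what Wu (2001) needs for the absolute continuity and $L^1$-continuity of the transition densities in Lemma \ref{Wu-Prop1.2}, not for existence, uniqueness, and non-explosion here.
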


\begin{Lemma} \label{Wu-Prop1.2}
For each $k \in \ss$, let $\bigl(P_{k}(t,z,\cdot)\bigr)$ be the transition probability family of Markov process
$\bigl((Z^{(k)}(t))_{t\ge 0},(\bP_{k}^{(z)})_{z\in \R ^{2d}}\bigr)$ (solution of  \eqref{XYk}). For each $k\in \ss$, $t>0$ and $z\in \R ^{2d}$,
$P_{k}(t,z,{\d} z^{\prime})=p_{k}(t,z,z^{\prime}){\d} z^{\prime}$,
$p_{k}(t,z,z^{\prime})>0$, ${\d} z^{\prime}$-a.e. and
\begin{equation}\label{p-continuous}
z\to p_{k}(t,z,\cdot) \, \, \hbox{is continuous from} \, \, \R ^{2d} \, \, \hbox{to} \, \, L^{1}(\R ^{2d},{\d} z^{\prime}).\end{equation}  In particular, for each $k\in \ss$, $P_{k}(t,z,\cdot)$ is strong Feller for all $t>0$. \end{Lemma}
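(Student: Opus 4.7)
The plan is to reduce the full degenerate SDE \eqref{XYk} to a tractable reference model by a Girsanov change of measure and then transfer the smoothness and positivity of the reference transition density to the original process, finally upgrading pointwise/weak continuity to $L^1$-continuity using the Dunford-Pettis theorem and Egorov's lemma (as hinted in the text).

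\textbf{Step 1: Reference model with smooth positive density.} I would first consider the auxiliary ``driftless'' system
\begin{equation*}
\d X^{0}(t) = Y^{0}(t)\,\d t, \qquad \d Y^{0}(t) = \sg(X^{0}(t), Y^{0}(t), k)\,\d B(t).
\end{equation*}
Under Assumption \ref{sgcvq}(iii) the dispersion $\sg(\cdot,\cdot,k)$ is smooth and uniformly elliptic in $y$. The vector fields $V_{0} = y\cdot\nabla_{x}$ and $V_{j}=\sg_{\cdot j}(\cdot,\cdot,k)\partial_{y_{j}}$ satisfy H\"ormander's bracket condition because $[V_{j},V_{0}] = \sg_{\cdot j}\partial_{x_{j}}$ spans the missing $x$-directions. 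Hence the reference transition probability admits a jointly smooth density $p^{0}_{k}(t,z,z')$, and the Stroock-Varadhan support theorem together with the controllability of the Hamiltonian control system gives strict positivity $p^{0}_{k}(t,z,z')>0$.

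\textbf{Step 2: Localized Girsanov transformation.} For $N>0$, set $\tau_{N}=\inf\set{t\ge 0:|Z^{(k)}(t)|\ge N}$. On $[0,\tau_{N}]$, define
\begin{equation*}
h(s) := \sg^{-1}(X^{(k)}(s),Y^{(k)}(s),k)\bigl[c(X^{(k)}(s),Y^{(k)}(s),k)Y^{(k)}(s)+\nabla_{x}V(X^{(k)}(s),k)\bigr].
\end{equation*}
By the local boundedness parts of Assumption \ref{sgcvq}(i)--(iii) (the lower bound on $\sg$ gives a bounded $\sg^{-1}$ on compacts, and $c,\nabla_{x}V$ are continuous hence bounded on compacts), $h$ is bounded on $[0,\tau_{N}]$, so Novikov's criterion is satisfied and the Dol\'eans-Dade exponential $\mathcal{E}_{t} := \exp\bigl\{-\int_{0}^{t\wedge\tau_{N}}h(s)^{T}\d B(s)-\tfrac12\int_{0}^{t\wedge\tau_{N}}|h(s)|^{2}\d s\bigr\}$ is a genuine martingale. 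Under the equivalent measure $\d\tilde\P_{N}=\mathcal{E}_{t}\d\P$, the stopped process $Z^{(k)}(\cdot\wedge\tau_{N})$ solves the reference SDE driven by the $\tilde\P_{N}$-Brownian motion.

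\textbf{Step 3: Existence and positivity of the density.} For any bounded Borel $f\ge 0$,
\begin{equation*}
\E_{z}\bigl[f(Z^{(k)}(t));\,t<\tau_{N}\bigr] = \widetilde \E_{z}\bigl[f(Z^{0}(t))\mathcal{E}_{t}^{-1};\,t<\tau_{N}^{0}\bigr] = \int_{\R^{2d}} f(z')\,p_{k,N}(t,z,z')\,\d z',
\end{equation*}
where $p_{k,N}(t,z,z')$ is obtained by disintegrating the conditional expectation of $\mathcal{E}_{t}^{-1}\one_{\{t<\tau^{0}_{N}\}}$ against the smooth reference density $p^{0}_{k}(t,z,\cdot)$. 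Non-explosion (Lemma \ref{Wu-Lem1.1}) gives $\P_{z}(\tau_{N}\to\infty)=1$, so monotone convergence yields a density $p_{k}(t,z,z')=\lim_{N\to\infty}p_{k,N}(t,z,z')$ satisfying $P_{k}(t,z,\d z')=p_{k}(t,z,z')\d z'$. Positivity almost everywhere follows from the positivity of $p^{0}_{k}$ and the strict positivity of the Girsanov density.

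\textbf{Step 4: $L^{1}$-continuity in $z$.} Fix $z_{n}\to z_{0}$ in $\R^{2d}$. I would combine the smooth dependence of $p^{0}_{k}(t,z,\cdot)$ on $z$ with uniform $L^{p}$-bounds on $\mathcal{E}_{t}^{-1}$ (for some $p>1$, uniform in $z$ within a compact) to show that the family $\{p_{k}(t,z_{n},\cdot)\}_{n}$ is uniformly integrable on $\R^{2d}$. By the Dunford-Pettis theorem this gives relative weak $L^{1}$-compactness, while pointwise a.e.\ convergence along a subsequence (coming from the smooth reference density and Fatou) plus Egorov's lemma upgrades the weak convergence to strong $L^{1}$-convergence. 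The limit is independent of the subsequence, so $p_{k}(t,z_{n},\cdot)\to p_{k}(t,z_{0},\cdot)$ in $L^{1}(\R^{2d},\d z')$, proving \eqref{p-continuous}; the strong Feller property follows since for bounded measurable $f$, $|P_{k}(t)f(z)-P_{k}(t)f(z_{0})|\le\|f\|_{\infty}\|p_{k}(t,z,\cdot)-p_{k}(t,z_{0},\cdot)\|_{L^{1}}$.

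\textbf{Main obstacle.} The delicate point is Step 4: obtaining uniform integrability of $\{p_{k}(t,z_{n},\cdot)\}$ uniformly in the tail of $\R^{2d}$ despite the unbounded coefficients $c$ and $\nabla_{x}V$. I would handle this by splitting $\R^{2d}=B_{R}\cup B_{R}^{c}$ and using a Lyapunov/energy estimate for the Hamiltonian structure (exploiting the dissipativity in Assumption \ref{sgcvq}(ii) for large $|x|$) to control $\P_{z_{n}}(Z^{(k)}(t)\in B_{R}^{c})$ uniformly in $n$; on $B_{R}$ the bounded Girsanov density plus smooth reference density suffice.
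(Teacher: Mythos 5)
First, a point of reference: the paper does not prove Lemma \ref{Wu-Prop1.2} at all --- it is imported verbatim from \cite{Wu-01}, and the only indication of method in the text is the phrase ``by virtue of the Girsanov formula, the Dunford--Pettis theorem and the Egorov lemma.'' Your proposal --- a localized Girsanov transform against a hypoelliptic kinetic reference model, followed by Dunford--Pettis/Egorov to pass from weak to strong $L^{1}$ convergence --- is therefore a reconstruction of exactly the cited argument rather than an alternative to an in-paper proof. Steps 1--3 are sound in outline, modulo one overstatement: Assumption \ref{sgcvq}(iii) gives only pointwise strict positive definiteness of $\sg$ with a uniform \emph{upper} bound, not uniform ellipticity; this is still enough for H\"ormander's condition and for the boundedness of $\sg^{-1}$ on the compact sets used in the stopped Novikov argument, but the phrase ``uniformly elliptic in $y$'' should be dropped.

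The genuine soft spot is Step 4. Dunford--Pettis only yields a weakly convergent subsequence in $L^{1}$, and weak $L^{1}$ convergence of probability densities does not imply strong convergence; the upgrade requires almost-everywhere convergence of $p_{k}(t,z_{n},\cdot)$ to $p_{k}(t,z_{0},\cdot)$ (Vitali/Scheff\'e), which you assert ``comes from the smooth reference density and Fatou.'' But in your representation the density is the product of the smooth reference density $p^{0}_{k}(t,z,z^{\prime})$ with a conditional expectation of the Girsanov weight given the endpoint $Z^{0}(t)=z^{\prime}$, and the pointwise regularity in $z$ of that conditional-expectation factor is precisely the non-trivial content of the lemma; Fatou only provides one-sided inequalities, not convergence. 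A cleaner way to close the argument is to first prove the strong Feller property of $P_{k}(t,\cdot,\cdot)$ directly, i.e., continuity of $z\mapsto \E_{z}\bigl[f(Z^{0}(t))M_{t}\bigr]$ for bounded measurable $f$, using continuity of the stochastic flow of the reference equation, uniform integrability of the Girsanov weights for $z$ in a compact set, and Egorov's lemma to reduce from measurable $f$ to continuous $f$; the $L^{1}$-continuity \eqref{p-continuous} is then equivalent to total-variation continuity of $z\mapsto P_{k}(t,z,\cdot)$, which follows from the classical fact that the composition of two strong Feller kernels is continuous in total variation, applied to the two half-steps of length $t/2$. Your ``main obstacle'' paragraph (tail control via a Lyapunov estimate exploiting the dissipativity in Assumption \ref{sgcvq}(ii)) is the right device for the uniform integrability over all of $\R^{2d}$, but it does not by itself supply the missing pointwise convergence.
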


\section{Special Markovian switching case}\label{Special}
 As alluded in the introduction, our goal is to use the martingale method to show that the system \eqref{XY}--\eqref{La} has a unique global weak solution. To this end, we  develop the methodology in our recent paper \cite{XiZ-18}, in which the martingale problem for weakly coupled L\'evy type operators is investigated. The basic idea is to construct a martingale solution to the operator $\A$ of \eqref{A} through the martingale solution to $\wdh \A$ of \eqref{eq-A-hat} and an appropriate exponential martingale associated with the discrete component $\La$. Note, however, that the discrete component $\La$  in \cite{XiZ-18} has a finite state space; while $\La$ in this paper has a countably infinite state space. Consequently the arguments in \cite{XiZ-18} is not directly applicable and a careful   extension is needed here.

% by the constant $Q$-matrix given by \eqref{eq-Q-hat}; consequently the generator $\A$  takes the special form $\wdh \A$ in \eqref{eq-A-hat}. Since $\La$ is independent of $(X,Y)$, we can derive a unique martingale solution $\wdh\P$ to  $\wdh \A$ by piecing-together the martingale solutions to $\{\LL_{k}, k\in \ss\}$ between the switching times $\{ \tau_{n}, n\ge 1\}$ of $\La$. The details are spelled out in Theorem \ref{specialsolution}. With this $\wdh\P$,  we use a carefully designed exponential martingale $M$ of \eqref{Mtmartingale} to obtain a martingale solution to the operator $\A$

To proceed, consider a special $Q$-matrix  $\wdh{Q}=\bigl(\wdh{q}_{kl}\bigr)$ given by
 \begin{equation}\label{eq-Q-hat}
 { \wdh q_{kl} : = \sup_{z\in \R^{2d}} q_{kl}(z) \text{ for }k\neq l, \text{ and } \wdh q_{kk}: = -\sum_{l\neq k} \wdh q_{kl} \text{ for }k \in \ss.}
 \end{equation}
% \begin{equation}\label{eq-Q-hat}
% \wdh{Q}=\bigl(\wdh{q}_{kl}\bigr)=\left(\begin{array}{cccc}
% {  -\frac{1}{2}} & {   \frac{1}{3}} & {   \frac{1}{3^{2}}} & {   \cdots}\\
%{   \frac{1}{3}} & {   -\frac{1}{2}} & {   \frac{1}{3^{2}}} & {   \cdots}\\
%{   \frac{1}{3}} & {   \frac{1}{3^{2}}} & {   -\frac{1}{2}} & {   \cdots}\\
%{   \vdots} & {   \vdots} & {   \vdots} & {   \ddots}
%\end{array} \right); \, \text{ that is, } \, \wdh{q}_{kl}=\left \{
%\begin{array}{ll} {   \frac{1}{3^{l}},}
%& {   \hbox{if}\, \, l < k,} \\
%{   \frac{1}{3^{l-1}},} & {   \hbox{if}\, \, l > k.} \end{array}
%\right.
%\end{equation}
%Note that the process $(\wdh{X},\wdh{Y},\psi)$ in fact, is a diffusion process with Markovian switching.
As usual, denote by $\B_{b}(\ss)$ the Banach space of all bounded measurable functions on $\ss$ equipped with the supremum norm.
Corresponding to the $Q$-matrix $\wdh Q$, we
introduce an operator $\wdh{Q}$ on $\B_{b}(\ss)$ as follows: for any $f\in \B_{b}(\ss)$,
\begin{equation}
\label{eq-Q-hat-op-defn}
\wdh{Q}f(k)=\sum_{l \in \ss} \wdh{q}_{kl}\bigl(f(l)-f(k)\bigr).
\end{equation}

For a given $k\in \ss$,  a probability measure ${\Q }^{(k)}$
on $D([0,\infty),\ss)$ is said to be a solution to the martingale problem for
the operator $\wdh{Q}$ starting from $k$, if
${\Q }^{(k)}(\La(0))=k)=1$ and for each function $f \in \B(\ss)$,
\begin{equation}\label{martingale3} N_{t}^{(f)}:=f(\La(t))-f(\La(0))-\int_{0}^{t} \wdh{Q}
f(\La(s)){\d} s \end{equation} is an $\{\N_{t}\}$-martingale with respect to ${\Q }^{(k)}$, where ${\N}_{t}$ is the $\sg$-field generated by the cylindrical sets on $D([0,\infty),\ss)$ up to time $t$. Put $\N: = \bigvee_{t= 0}^{\infty}\N_{t}$. Here $\La$ is the coordinate process $\La(t,\omega) : = \omega(t)$ with $\omega \in D([0,\infty), \ss)$ and $t\ge 0$.

\begin{Lemma} \label{lemma-Q}
For any given $k\in \ss$, there exists a unique martingale solution ${\Q }^{(k)}$ on $D([0,\infty),\ss)$ for the operator $\wdh{Q}$ starting from $k$. \end{Lemma}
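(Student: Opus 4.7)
The plan is to exploit Assumption~\ref{sgcvq}(iv) together with the definition \eqref{eq-Q-hat} to observe that $\sup_{k\in\ss}|\wdh q_{kk}|=\sup_{k\in\ss}\sum_{l\neq k}\wdh q_{kl}\le H$. Consequently $\wdh Q$, viewed via \eqref{eq-Q-hat-op-defn}, is a bounded linear operator on $\B_b(\ss)$ with operator norm at most $2H$, and the lemma falls within the classical framework of well-posedness of the martingale problem for a bounded generator on a countable state space. Both existence and uniqueness then reduce essentially to arguments already familiar from the finite-state case.

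For existence, I would construct $\La$ pathwise by the standard jump-hold recipe: set $\La(0)=k$ and $\tau_0=0$; given $\La(\tau_n)=j$ with $\wdh q_{jj}<0$, sample the holding time $\tau_{n+1}-\tau_n$ as an independent $\mathrm{Exp}(|\wdh q_{jj}|)$ random variable and, independently, choose $\La(\tau_{n+1})=l\neq j$ with probability $\wdh q_{jl}/|\wdh q_{jj}|$; if $\wdh q_{jj}=0$ declare $j$ absorbing. Because every holding rate is bounded above by $H$, the jump times stochastically dominate $\sum_{i=1}^{n}S_i$ with $S_i$ i.i.d.\ $\mathrm{Exp}(H)$, so $\tau_n\to\infty$ a.s.\ and the constructed $\La$ is a non-explosive \cadlag\ $\ss$-valued process on $[0,\infty)$. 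Let $\Q^{(k)}$ denote its law on $D([0,\infty),\ss)$. That $N_t^{(f)}$ in \eqref{martingale3} is an $\{\N_t\}$-martingale for every $f\in\B_b(\ss)$ follows from a direct Dynkin-type computation at the successive jump times; the uniform boundedness of $\wdh Q f$ rules out any integrability issue.

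For uniqueness, let $\Q$ be any martingale solution starting from $k$. Plugging $f(\cdot)=\one_{\{\cdot=j\}}\in \B_b(\ss)$ into \eqref{martingale3} and taking $\Q$-expectations yields the Kolmogorov forward equation
\begin{equation*}
p_t(j)=\one_{\{k=j\}}+\int_{0}^{t}\sum_{l\in\ss}p_u(l)\,\wdh q_{lj}\,{\d}u,\qquad j\in\ss,
\end{equation*}
for the marginals $p_t(j):=\Q(\La(t)=j)$. Boundedness of $\wdh Q$ on $\ell^\infty(\ss)$ forces $p_t$ to equal the unique $\ell^\infty$-solution $e^{t\wdh Q^T}\delta_k$, pinning down the one-dimensional distributions of $\Q$. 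Iterating this identification on each subinterval of a partition $0\le t_1<\cdots<t_n$, via the standard time-shift / optional-sampling argument for martingale problems with bounded generators, determines all finite-dimensional distributions and hence $\Q=\Q^{(k)}$. The only mildly delicate point I anticipate is handling the countable, rather than finite, state space in this uniqueness step, but the uniform bound \eqref{eq-Q-new-cond} turns $\wdh Q$ into a genuine bounded operator on $\B_b(\ss)$, reducing the entire argument to the finite-state template; no substantive obstacle beyond careful bookkeeping is expected.
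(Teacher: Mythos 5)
Your proposal is correct, but it is worth noting that the paper does not actually run this argument: its entire proof is a one-line citation to Zheng and Zheng's martingale method for $Q$-processes, justified by the observation that the special matrix $\wdh Q$ of \eqref{eq-Q-hat} is a conservative $Q$-matrix with uniformly bounded rates (which follows from \eqref{eq-Q-new-cond}, exactly the bound $\sup_{k}|\wdh q_{kk}|\le H$ you isolate). What you have written is essentially a self-contained reconstruction of what that citation encapsulates: the jump-hold construction plus stochastic domination of the jump times by a sum of i.i.d.\ $\mathrm{Exp}(H)$ variables for existence and non-explosion (the same mechanism the paper later invokes via Norris to get \eqref{eq-tau_n to infty}), and the Kolmogorov forward equation plus the standard conditioning argument for uniqueness. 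Both routes hinge on the single key input that \eqref{eq-Q-new-cond} turns $\wdh Q$ into a bounded conservative generator on a countable state space; your version buys transparency and independence from the (hard-to-access) reference, at the cost of length. One small imprecision: in the uniqueness step the marginals $p_t$ live in $\ell^{1}(\ss)$, and it is the row-sum bound $H$ that makes the adjoint action $p\mapsto\bigl(\sum_{l}p(l)\wdh q_{lj}\bigr)_{j}$ a bounded operator on $\ell^{1}$ (column sums of $\wdh Q$ are not controlled by \eqref{eq-Q-new-cond}, so boundedness ``on $\ell^{\infty}$'' is not the right statement); the Gronwall argument should therefore be run in total-variation norm. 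This is cosmetic and does not affect the validity of the argument.
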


\para{Proof.} By the definition of the special $Q$-matrix $\wdh Q$, we can easily prove this lemma by \cite[Theorems 3.1 and 3.2]{ZhengZ-86}. \qed
% \chao{Is this lemma still correct? I think so, but I don't have \cite{ZhengZ-86} and I did not find relevant results in the literature. }

% {\blue For definiteness, we can denote the above martingale solution by ${\Q }^{(0,k)}$ to emphasize the starting time $0$. As in the previous section, we can also prove the following result. For any given $(s,k)\in [0,\infty)\times \ss$, there exists a probability measure ${\Q }^{(s,k)}$ on $\Omega^{(2)}:=D([0,\infty),\ss)$ such that ${\Q }^{(s,k)}\big(\La(t))=k, 0\le t\le s\big)=1$ and for each function $f \in \B(\ss)$, $$f(\La(t))-f(\La(s))-\int_{s}^{t} \wdh{Q} f(\La(u)){\d} u$$ is an $\{\N_{t}\}$-martingale with respect to ${\Q }^{(s,k)}$ after time $s$. For convenience, with a slight abuse of notation as before, we simply denote ${\Q }^{(s,\La(s))}$ by ${\Q }^{(\La(s))}$ subsequently.}

Now we introduce an operator $\wdh \A$ on $C_{c}^{2}(\R ^{2d}\times \ss)$ as follows:
\begin{equation}
\label{eq-A-hat}
\wdh \A f(x,y,k ) : = \LL_{k} f(x,y,k) + \wdh Q f(x,y,k),
\end{equation} where the operators $\LL_{k}$ and $\wdh Q$ are defined in (\ref{L}) and (\ref{eq-Q-hat-op-defn}), respectively.  Note that $\wdh \A$ of (\ref{eq-A-hat}) is really a special case of the operator $\A$ defined in (\ref{A}).

Similar to the notion of martingale solution for the operator $\A$ given in  Definition \ref{defn-mg-A}, we say that a probability measure $\wdh \bP^{(x,y,k)}$ on $ C([0,\infty),\R ^{2d})\times D([0,\infty),\ss)$ is a solution to the  martingale problem for the operator $\wdh \A$  starting from $(x,y,k)\in \R ^{2d}\times \ss $ if $\wdh \bP^{(x,y,k)}\{(X(0),Y(0),\La(0)) = (x,y,k) \} =1$ and for each $f\in C_{c}^{\infty} (\R ^{2d} \times \ss)$,
\begin{equation}\label{eq-Mt-hat}
  \wdh M_{t}^{(f)} : = f(X(t),Y(t),\La(t)) - f(X(0),Y(0),\La(0)) - \int_{0}^{t} \wdh \A f(X(s),Y(s),\La(s)) {\d} s
\end{equation} is a martingale with respect to the filtration  $\{\F_{t}\}$ under $\wdh \bP^{(x,y,k)}$. Again,  $(X,Y,\La)$ is the coordinate process on $C([0,\infty),\R ^{2d})\times D([0,\infty),\ss)$.

We will show that for each $(x,y,k) \in \R ^{2d}
\times \ss$, there exists a unique martingale solution $\wdh \bP^{(x,y,k)}$
for the operator $\wdh \A$ starting from $(x,y,k)$.
Our construction of the desired probability measure
$\wdh \bP^{(x,y,k)}$ on $C([0,\infty),\R ^{2d})\times D([0,\infty),\ss)$
as well as the proof of uniqueness for such a solution relies heavily on the martingale solutions $\{\bP^{(k)}_{z}: z=(x,y)\in\R ^{2d},
k\in\ss\}$ and $\{{\Q }^{(k)}: k\in\ss\}$, and the stopping
times $\{\tau_{n}\}$ defined in \eqref{tau}. In order to accomplish the construction, the special matrix $\wdh Q$ being independent of $(x, y)$
is very crucial; see Lemma \ref{lem-33} and its proof below.

To proceed, let us write $\omega= (\omega_{1}, \omega_{2})\in \Omega : =\Omega_{1}\times \Omega_{2}$ with $\Omega_{1} : =C([0,\infty),\R^{2d}) $ and $\Omega_{2}: =D([0,\infty),\ss)$. We denote by ${\G}_{t}$   the $\sg$-field generated by the cylindrical sets on $\Omega_{1}$ up to time $t$ and $\G: = \bigvee_{t= 0}^{\infty} \G_{t}$. Recall that ${\N}_{t}$ is the $\sg$-field generated by the cylindrical sets on $\Omega_{2}$ up to time $t$. We have $\F_{t}= \G_{t}\bigotimes \N_{t}$ for each $t\ge 0$ and $\F= \G\bigotimes \N$.

Let $(Z,\La)(t, \omega) : = (\omega_{1}(t), \omega_{2}(t))$ be the coordinate process on $\Omega$ and let $\{\tau_{n}\}$ be the sequence of stopping times defined by
\begin{equation}\label{tau} \tau_{0}(\omega_{2}) \equiv 0, \ \hbox{ and for } n \ge 1, \ \tau_{n}(\omega_{2}): =\inf \{ t >\tau_{n-1}(\omega_{2}):
\La(t,\omega_{2}) \neq \La(\tau_{n-1}(\omega_{2}), \omega_{2}))\}. \end{equation}
Thanks to \eqref{eq-Q-new-cond} in Assumption \ref{sgcvq} (iv), we have $\sup_{k\in \ss} \wdh q_{k} \le H < \infty$. Then it follows from Theorem 2.7.1 of \cite{Norris-98}  that
 for any  $k\in \ss$, \begin{equation}
\label{eq-tau_n to infty}
{\Q }^{(k)}\set{\lim_{n \to \infty}
\tau_{n}=+\infty}=1.
\end{equation}
% {\blue Moreover, we have ${\Q }^{(k)}\bigl(\tau_{1}\ge t\bigr)=\exp(-{t/2})$ for all $t\ge 0$ and $${\Q }^{(k)}\bigl(\La(\tau_{1})=l\bigr)= \frac{\wdh q_{kl}}{\frac12}= \begin{cases} {  \frac{2}{3^{l}},} & {  \hbox{if}\, \, l < k,} \\ {  \frac{2}{3^{l-1}},} & {  \hbox{if}\, \, l > k.} \end{cases} $$}

Next let us introduce a random counting measure on $[0,\infty)\times\ss$ as follows: for $t>0$, $k\in \ss $, and $A \subset
\ss$, let
\begin{equation}
\label{eq-n-defn}
 n (t,A):=\sum_{s \le t} {\mathbf{1}}_{\{\La(s) \in A, \La(s)\neq
\La(s-)\}}.
\end{equation} Also, for $k\in \ss $ and $A \subset\ss$, we define
\begin{equation}\label{nu}\nu(k;A):= \sum_{l \in A \setminus \{k\}} \wdh q_{kl}.\end{equation} In view of \cite[Lemma 2.4]{ShigaT-85}, we know that $\int_{0}^{t} \nu( {\La(s-)};A)\d s$
%\chao{I changed $\La(s)$ to $\La(s-)$. Please check.}
is the compensator of the random counting measure $n (t,A)$; namely,
\begin{equation}\label{eq-tilde-mu}
 \widetilde{n}(t,A):= n (t,A)-\int_{0}^{t} \nu( {\La(s-)};A){\d} s
\end{equation} is a martingale
measure with respect to $\Q ^{(k)}$. Moreover, notice that the operator $\wdh Q$ defined in (\ref{eq-Q-hat-op-defn}) can
be represented as
\begin{equation}\label{eq-Q-hat-operator}
 \wdh Qf(k)=\sum_{l \in \ss}\wdh q_{kl}\bigl(f(l) - f(k)\bigr) =\int_{\mathbb S}\bigl(f(l)-f(k)\bigr)\nu(k;{\d} l).
\end{equation}

%To proceed, Then we have:
\begin{Lemma}\label{Lem-32} For any $z= (x,y) \in \R^{2d}$ and $k \in \ss$, let $\P^{(z)}_{k} $ be the  probability measure on $C([0,\infty); \R^{2d})$ as in the statement of Lemma \ref{Wu-Lem1.1}. %  that solves the martingale problem for $\LL_{k}$ with initial condition $z$.
Then for any $A \in \G$ and $k\in \ss$, the function $z\mapsto \P^{(z)}_{k} (A)$ is measurable.
\end{Lemma}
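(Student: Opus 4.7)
The plan is to apply Dynkin's $\pi$-$\lambda$ theorem. Define
$$\mathcal{H} := \set{A \in \G : z \mapsto \bP_k^{(z)}(A) \text{ is Borel measurable on } \R^{2d}}.$$
It is routine to verify that $\mathcal{H}$ is a Dynkin system (or $\lambda$-system): it contains $\Omega_1$; it is closed under complements because $\bP_k^{(z)}(A^c) = 1 - \bP_k^{(z)}(A)$; and it is closed under countable increasing unions by monotone convergence. Hence, to conclude $\mathcal{H} = \G$, it suffices to find a $\pi$-system $\mathcal{C} \subset \mathcal{H}$ generating $\G$.

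The natural candidate for $\mathcal{C}$ is the class of finite-dimensional cylinder sets
$$C = \set{\omega_1 \in \Omega_1 : (\omega_1(t_1), \ldots, \omega_1(t_n)) \in B_1 \times \cdots \times B_n},$$
with $n \ge 1$, $0 \le t_1 < t_2 < \cdots < t_n$, and $B_1, \ldots, B_n \in \B(\R^{2d})$. This family is closed under finite intersections and generates $\G$ by the very definition of the cylindrical $\sigma$-field. To show $\mathcal{C} \subset \mathcal{H}$, I would invoke the Markov property of the weak solution furnished by Lemma \ref{Wu-Lem1.1} together with the transition density from Lemma \ref{Wu-Prop1.2}: for $z \in \R^{2d}$,
$$\bP_k^{(z)}(C) = \int_{B_1} p_k(t_1, z, z_1) \int_{B_2} p_k(t_2 - t_1, z_1, z_2) \cdots \int_{B_n} p_k(t_n - t_{n-1}, z_{n-1}, z_n)\, \d z_n \cdots \d z_1.$$
By Fubini, the iterated inner integral defines a bounded, Borel measurable function $\varphi(z_1)$ of $z_1$ that does not depend on $z$ (and satisfies $0 \le \varphi \le 1$). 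Thus $\bP_k^{(z)}(C) = \int_{\R^{2d}} \one_{B_1}(z_1) \varphi(z_1) p_k(t_1, z, z_1)\, \d z_1$, and Lemma \ref{Wu-Prop1.2} gives that $z \mapsto p_k(t_1, z, \cdot)$ is continuous from $\R^{2d}$ into $L^1(\R^{2d}, \d z_1)$. Therefore $z \mapsto \bP_k^{(z)}(C)$ is in fact \emph{continuous} on $\R^{2d}$, in particular Borel measurable, so $C \in \mathcal{H}$.

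The only substantive step is the verification on cylinder sets; once the $L^1$-continuity in $z$ from Lemma \ref{Wu-Prop1.2} is in hand, the rest is a formal monotone class argument. With $\mathcal{C} \subset \mathcal{H}$ established, Dynkin's $\pi$-$\lambda$ theorem gives $\G = \sigma(\mathcal{C}) \subset \mathcal{H}$, which is the desired conclusion.
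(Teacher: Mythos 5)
Your proposal is correct and follows essentially the same route as the paper: both define the $\lambda$-system of sets $A\in\G$ for which $z\mapsto\P^{(z)}_{k}(A)$ is measurable, verify it contains the finite-dimensional cylinder sets via the product formula with the transition densities $p_{k}$ from Lemma \ref{Wu-Prop1.2}, and conclude by Dynkin's $\pi$-$\lambda$ theorem. Your additional observation that the $L^{1}$-continuity of $z\mapsto p_{k}(t,z,\cdot)$ actually yields continuity (not just measurability) on cylinder sets is a correct, if unneeded, refinement.
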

\begin{proof}
We consider the collection $$\mathfrak D: = \{ A\in \G: \text{ the function } z\mapsto \P^{(z)}_{k} (A)\text{ is measurable} \}.$$ It is straightforward to show that $\fD$ is a $\lambda$-system. Moreover $ \fD$ contains all finite-dimensional cylinder sets of the form: $\{Z(t_{1} )\in B_{1}, \dots, Z(t_{m} )\in B_{m} \}$, where $m \in \NN$, $0\le  t_{1}< \dots < t_{m}$, and $B_{1}, B_{2}, \dots, B_{m} \in \B(\R^{2d})$.   Indeed, since \begin{align*}
\P&_{k}^{(z)}       \{ Z(t_{1}) \in B_{1}, \dots, Z(t_{m}) \in B_{m} \}   \\
    &   = \int_{B_{1}}\dots \int_{B_{m}} p_{k}(t_{1}, z,z_{1})p_{k}(t_{2}-t_{1},z_{1}, z_{2})\dots p_{k}(t_{m}- t_{m-1}, z_{m-1},z_{m})\d z_{m}\d z_{m-1}\dots\d z_{1},
\end{align*} where %$Z$ is the coordinate process on  $C([0,\infty); \R^{2d})$ and
  $p_{k}(t,z,\cdot)$ is the probability density function of the probability transition function $P_{k}(t,z,\cdot)$, it follows that $\{Z(t_{1} )\in B_{1}, \dots, Z(t_{m} )\in B_{m} \} \in \mathfrak D$.  Since the finite-dimensional cylinder sets generates $\G$, the claim follows from Dynkin's $\pi$-$\lambda$ Theorem.
\end{proof}

\begin{Lemma}\label{lem-33}
For any $n \ge 1$, there exists a probability measure $\P^{(Z(\tau_{n}))}_{\La(\tau_{n})}$ on $(\Omega_{1}, \G)$
 %\chao{Probably first on $(C([0,\infty); \R^{2d}), \G)$ then extend it to $(\Omega, \F)$}
 such that for any $f\in C_{c}^{2}(\R^{2d})$,
\begin{displaymath}
f(Z(t)) - f(Z(\tau_{n})) - \int_{\tau_{n}}^{t} \LL_{\La(\tau_{n})}f(Z(s))\d s, \quad t \ge \tau_{n}
\end{displaymath} is a martingale under $\P^{(Z(\tau_{n}))}_{\La(\tau_{n})}$.
\end{Lemma}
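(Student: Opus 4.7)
The plan is to produce $\P^{(Z(\tau_n))}_{\La(\tau_n)}$ by specializing the measurable family $\{\P^{(z)}_k: z\in\R^{2d}, k\in\ss\}$ supplied by Lemma \ref{Wu-Lem1.1} to the random pair $(Z(\tau_n), \La(\tau_n))$. For fixed $(z,k)$, $\P^{(z)}_k$ is by construction a martingale solution to $\LL_k$ starting from $z$: denoting by $\bar Z$ the coordinate process on $\Omega_1=C([0,\infty),\R^{2d})$, we have $\P^{(z)}_k\{\bar Z(0)=z\}=1$ and for every $f\in C_c^2(\R^{2d})$ the process
\begin{equation*}
 f(\bar Z(t))-f(z)-\int_0^t \LL_k f(\bar Z(s))\,\d s, \qquad t\ge 0,
\end{equation*}
is a martingale under $\P^{(z)}_k$. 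The statement of the lemma is the time-shifted, random-start version of this.

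The first step is to verify that $(z, k)\mapsto \P^{(z)}_k$ is a genuine probability kernel. By Lemma \ref{Lem-32}, for every $A\in \G$ the map $z\mapsto \P^{(z)}_k(A)$ is measurable, and since $\ss$ is countable, joint measurability of $(z,k)\mapsto \P^{(z)}_k(A)$ on $\R^{2d}\times\ss$ is automatic. Because $\tau_n$ is an $\{\F_t\}$-stopping time and both $Z(\tau_n)$ and $\La(\tau_n)$ are $\F_{\tau_n}$-measurable, I may then define
\begin{equation*}
 \P^{(Z(\tau_n))}_{\La(\tau_n)}(\cdot)\;:=\;\P^{(z)}_k(\cdot)\bigr|_{(z,k)=(Z(\tau_n),\La(\tau_n))},
\end{equation*}
that is, the pull-back of the kernel along the random pair $(Z(\tau_n),\La(\tau_n))$, which is an $\F_{\tau_n}$-measurable random probability measure on $(\Omega_1,\G)$.

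Finally, the martingale property stated in the lemma follows from the time-homogeneity of $\LL_k$: conditionally on $\F_{\tau_n}$ and on $\{\La(\tau_n)=k\}$, the law of the shifted trajectory $s\mapsto Z(\tau_n+s)$ coincides with that of the coordinate process $\bar Z$ under $\P^{(Z(\tau_n))}_k$, so the martingale property for the unshifted process under the fixed kernel translates, via the change of variable $u=\tau_n+s$, into exactly the form displayed in the lemma. The main obstacle is precisely the measurability of $z\mapsto \P^{(z)}_k$, which is the content of Lemma \ref{Lem-32}; once that is in hand, the rest is standard book-keeping with conditional expectations and the shift operator. It is essential here that the matrix $\wdh Q$ from \eqref{eq-Q-hat} does not depend on $(x,y)$, since this is what guarantees $\La\equiv\La(\tau_n)$ throughout $[\tau_n,\tau_{n+1})$ and hence that a single generator $\LL_{\La(\tau_n)}$ governs the continuous component $Z$ on that interval.
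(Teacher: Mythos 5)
Your overall strategy---reduce everything to the kernel $(z,k)\mapsto\P^{(z)}_{k}$ furnished by Lemmas \ref{Wu-Lem1.1} and \ref{Lem-32}, and exploit the state-independence of $\wdh Q$---is the right one, and you correctly identify Lemma \ref{Lem-32} as the crux of measurability. But your construction of the measure differs from the paper's in a way that matters. You define $\P^{(Z(\tau_{n}))}_{\La(\tau_{n})}$ by plugging the random pair $(Z(\tau_{n}),\La(\tau_{n}))$ into the kernel, which yields a random measure depending on $\omega_{1}$ through $Z(\tau_{n})=\omega_{1}(\tau_{n}(\omega_{2}))$. The paper instead fixes $\omega_{2}$ (so that $\tau_{n}$ and $\La(\tau_{n})$ are constants on $(\Omega_{1},\G)$ --- this, rather than the constancy of $\La$ on $[\tau_{n},\tau_{n+1})$, which holds by the very definition of $\tau_{n+1}$, is where the state-independence of $\wdh Q$ enters) and defines a single measure on $(\Omega_{1},\G)$ as the mixture $\P_{j}^{(Z(\tau_{n}))}(A):=\int_{\R^{2d}}\P^{(z')}_{j}(A)\,\nu_{n}(\d z')$, where $\nu_{n}$ is the law of $Z(\tau_{n}(\omega_{2}))$. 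This mixture form is what is used downstream: in \eqref{Pn} one forms the product $\P^{(Z(\tau_{n}))}_{\La(\tau_{n})}\times\Q^{(\La(\tau_{n}))}$ and glues via $\otimes_{\tau_{n}}$, which requires a measure on $(\Omega_{1},\G)$ parametrized by $\omega_{2}$ alone, and the martingale property is then verified by the Fubini interchange \eqref{eq-theta-j-martingale} that relies precisely on this integral representation.

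The more serious gap is in your verification of the martingale property. You assert that ``conditionally on $\F_{\tau_{n}}$ and on $\{\La(\tau_{n})=k\}$, the law of the shifted trajectory $s\mapsto Z(\tau_{n}+s)$ coincides with that of the coordinate process under $\P^{(Z(\tau_{n}))}_{k}$.'' Conditionally under which measure? At this stage there is no measure on $\Omega$ governing $Z$ beyond $\tau_{n}$ --- the whole purpose of Lemma \ref{lem-33} is to supply the ingredient from which Theorem \ref{specialsolution} builds $\wdh\P$, so invoking a Markov or shift property of the post-$\tau_{n}$ trajectory is circular. In fact no shift argument is needed: since $\tau_{n}(\omega_{2})$ is a deterministic time on $(\Omega_{1},\G)$, the martingale property of $f(Z(t))-f(Z(0))-\int_{0}^{t}\LL_{j}f(Z(s))\,\d s$ under each fixed $\P^{(z')}_{j}$ immediately gives that its increment over $[\tau_{n}(\omega_{2}),t]$ is a $\{\G_{t}\}$-martingale, and one then integrates over $z'$ against $\nu_{n}$ and interchanges the order of integration to carry this over to the mixture, and finally sums over $j$ with the indicators $\one_{\{\La(\tau_{n})=j\}}$. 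Writing out that Fubini step is the actual content of the paper's proof, and it is the piece your proposal replaces with an appeal to a conditional law that has not been defined.
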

%  \fubao{We can change $\tau_{n}$ into $\tau_{1}$. The key problem is the first switching, and so we need only to consider the case $n=1$.}

\para{Proof.} Let us  prove the lemma for   the case when   $n =1$; the proof for the general case is similar. For each $j \in \ss$, it follows from Lemma~\ref{Wu-Lem1.1}  that for any $z=(x,y)\in \R ^{2d}$, the  probability measure ${\P}_{j}^{(z)}$ on $C([0,\infty),\R ^{2d})$ is the unique solution to the
martingale problem for the operator ${\LL}_{j}$ starting from $z$;
that is, $\P_{j}^{(z)}\{\omega_{1}: Z(0,\omega_{1})=z\}=1$ and for each function $f \in
C^{\infty}_{c}(\R ^{2d})$,
\begin{equation}\label{martingale2}
f(Z(t,\omega_{1}))-f(Z(0,\omega_{1}))-\int_{0}^{t} {\LL}_{j} f(Z(s,\omega_{1})){\d} s, \quad t\ge 0 \end{equation} is a $\{\G_{t}\}$-martingale with respect to ${\bP}_{j}^{(z)}$.
 %  where $Z$ is the coordinate process on $C([0,\infty); \R^{2d})$, and ${\G}_{t}$ is the $\sg$-field generated by the cylindrical sets on $C([0,\infty),\R^{2d})$ up to time $t$.
% Likewise, Lemma \ref{lemma-Q} implies that  for each $k \in \ss$,  ${\Q }^{(k)}\big(\La(0))=k\big)=1$ and for each function $g \in \B(\ss)$,
% \begin{equation} \label{eq-mart-Q-hat} g(\La(t))-g(\La(0))-\int_{0}^{t} \wdh{Q} g(\La(u)){\d} u, \quad t\ge 0 \end{equation} is an $\{\N_{t}\}$-martingale with respect to ${\Q }^{(k)}$.
% In the above, $(Z,\La)$ is the coordinate process on $\Omega$.
 % Also notice that $\F_{t}= \G_{t}\bigvee \N_{t}$.
% Fix $k \in \ss$ and $z= (x,y) \in \R^{2d}$. Denote $\P^{(1)}: = \P_{k}^{(z)}\times \Q^{(k)}$. Apparently $\P^{(1)}$ is a probability measure on $(\Omega, \F)$.
Recall that   $\tau_{1}= \tau_{1}(\omega_{2})$ is the first switching time defined in \eqref{tau}. For each $\omega_{2} \in D([0,\infty),\ss)$,
 let $\nu_{1}(\cdot,\omega_{2}): = \P^{(z)}_{k} \{\omega_{1}: Z(\tau_{1}(\omega_{2}), \omega_{1}) \in \cdot \}  $ be the law of $Z(\tau_{1}(\omega_{2}))$ under $\P^{(z)}_{k} $. Next for $A \in \G$, thanks to Lemma \ref{Lem-32}, we can define $$\P_{j}^{(Z(\tau_{1}(\omega_{2})))} (A) : = \int_{\R^{2d}} \P_{j}^{(z')}  (A) \nu_{1}(\d z')=\int_{\R^{2d}}  \P_{j}^{(z')} (A) p_{k}(\tau_{1} (\omega_{2}),z,z')\d z',$$
 where $p_{k}$ is the probability density function given in Lemma \ref{Wu-Prop1.2}.
For each $j\in \ss$ and any $f\in C_{c}^{2}(\R^{2d})$, we consider the following process \begin{displaymath}
\theta_{j}(t, \omega_{1}): = \one_{\{\La(\tau_{1}(\omega_{2}), \omega_{2})=j \}}\bigg[f(Z(t,\omega_{1}))- f(Z(\tau_{1}(\omega_{2}), \omega_{1})) - \int_{\tau_{1}(\omega_{2})}^{t} \LL_{j} f (Z(s,\omega_{1}))\d s\bigg],
\end{displaymath} $t\ge \tau_{1}(\omega_{2}).$
 Note that for each $\omega_{2}\in \Omega_{2}$,
$\tau_{1}(\omega_{2})$ is independent of $\omega_{1}$
and it can be regarded as a constant % with respect to % the $\sg$-field
 on $(\Omega_{1},\G)$. Here the fact that %  we have used the specialty of
$\wdh{Q}=\bigl(\wdh{q}_{kl}\bigr)$ is independent  of $x$ is crucial.  %  and the probability measure $ \P_{j}^{(Z(\tau_{1}))}$.}
% for all $\omega \notin N$.
In addition, for any $\tau_{1}(\omega_{2})\le t_{1} \le t_{2}$ and $A \in \G_{t_{1}}$, we have
\begin{align}\label{eq-theta-j-martingale}
\nonumber \int_{A} \theta_{j}(t_{2}) \P_{j}^{(Z(\tau_{1}))} (\d \omega_{1})   & =    \int_{A} \int_{\R^{2d}}\theta_{j}(t_{2}) \P_{j}^{(z')} (\d \omega_{1}) \nu_{1}(\d z')
     =      \int_{\R^{2d}}  \int_{A}\theta_{j}(t_{2}) \P_{j}^{(z')} (\d \omega_{1}) \nu_{1}(\d z') \\
    & =      \int_{\R^{2d}}  \int_{A}\theta_{j}(t_{1}) \P_{j}^{(z')} (\d \omega_{1}) \nu_{1}(\d z')
     =  \int_{A} \theta_{j}(t_{1}) \P_{j}^{(Z(\tau_{1}))} (\d \omega_{1}).
\end{align} Note that we used  \eqref{martingale2} to obtain the third equality above. Thus $\{\theta_{j}(t), t\ge \tau_{1}(\omega_{2})\}$ is a martingale under $ \P_{j}^{(Z(\tau_{1}))}$  with respect to $\{\G_{t}\}$.

Now let us  define \begin{displaymath}
%\P_{\La(\tau_{1})}^{(Z(\tau_{1}))} (A)
\Q_{1}(A,\omega_{2})=\P_{\La(\tau_{1}(\omega_{2}))}^{(Z(\tau_{1}(\omega_{2})))} (A): %= \sum_{l \neq k} \Q^{(k)} \{\La(\tau_{1}) =l\} \P_{l}^{(Z(\tau_{1}))} (A) ( or
=\sum_{l \neq k}  \one_{\{ \La(\tau_{1}(\omega_{2}),\omega_{2}) =l\}}\P_{l}^{(Z(\tau_{1}(\omega_{2})))} (A) , \quad A \in \G.
\end{displaymath} Apparently for each $\omega_{2}\in \Omega_{2}$,  $ \Q_{1} (\cdot, \omega_{2}) $ is a probability measure on $(\Omega_{1}, \G)$.  For simplicity, let us write $\Q_{1} (\cdot)$  for $\Q_{1} (\cdot, \omega_{2})$ in the sequel.
% {\it Recall that for each $\omega_{2}\in \Omega_{2}$, $\tau_{1}(\omega_{2})$ is independent of $\omega_{1}$ and it can be regarded as a constant with respect to the $\sg$-field $\{{\G}_{t}\}$ and the probability measure $\Q_{1} (\cdot, \omega_{2})$.}\chao{What does it mean that $\tau_{1}(\omega_{2})$ is a constant with respect to the probability measure $\Q_{1} (\cdot, \omega_{2})$?}
We need to  show that  for each $\omega_{2}\in \Omega_{2}$,
% Our goal is {\magenta to find a probability measure, denoted by $\P^{(Z(\tau_{1}))}_{\La(\tau_{1})}$, on $C([0,\infty);\R^{2d})$ so that
\begin{displaymath}
\theta(t, \omega_{1}): = f(Z(t,\omega_{1}))- f(Z(\tau_{1}(\omega_{2}), \omega_{1})) - \int_{\tau_{1}(\omega_{2})}^{t} \LL_{\La(\tau_{1}(\omega_{2}), \omega_{2})} f (Z(s, \omega_{1}))\d s,\quad t\ge \tau_{1}(\omega_{2}),  %\quad f\in C_{c}^{2}(\R^{2d})
\end{displaymath} is a martingale under $\Q_{1} $.
%  $\P^{(Z(\tau_{1}))}_{\La(\tau_{1})}$.
 %In other words, the coordinate process $Z$ is a weak solution to the SDE \begin{displaymath} Z(t) = Z(\tau_{1}) + \int_{\tau_{1}}^{t} b (Z(s), \La(\tau_{1}))\d s + \int_{\tau_{1}}^{t} \sigma (Z(s), \La(\tau_{1}))\d W(s),\quad t \ge \tau_{1} \end{displaymath} under the probability measure $\P^{(Z(\tau_{1}))}_{\La(\tau_{1})}$. My conjecture is that
%  Now consider \begin{displaymath} \Q_{\omega}(A) =  \Q(\omega,  A): = \sum_{j \in \ss\setminus\{ k\}} Q_{j}(\omega, A)\one_{\{ \La(\tau_{1}(\omega),\omega) =j\}}, \quad A \in \F. \end{displaymath}
% To simplify notation, we write $\Q_{1}(\cdot,\omega_{2}) = \P_{\La(\tau_{1})}^{(Z(\tau_{1}))} (\cdot)$ in the reminder of the proof.
 % {\red for $\P^{(1)}$-a.s. $\omega\in \Omega$?} and for each $A \in \F$, the function $\omega\mapsto  \Q_{\omega} (A)$ is $\F_{\tau_{1}}$-measurable.
% \chao{{\red Do we have $$\Q_{\omega}(A) = \P^{(1)}(A|\F_{\tau_{1}})(\omega), \quad A \in \F,$$ for $\P^{(1)}$-a.s. $\omega$?} We only know that \begin{displaymath}\Q_{\omega}(A)  = \sum_{j \in \ss\setminus\{ k\}} Q_{j}(\omega, A)\one_{\{ \La(\tau_{1}(\omega),\omega) =j\}} =  \sum_{j \in \ss\setminus\{ k\}} \P_{j}^{(z)} (A|\F_{\tau_{1}}) (\omega)\one_{\{ \La(\tau_{1}(\omega),\omega) =j\}}.\end{displaymath}}
% \fubao{We cannot prove $\Q_{\omega}(A) = \P^{(1)}(A|\F_{\tau_{1}})(\omega),$ since the right-hand side contains the information of $\Q{(k)}$ but the left-hand side does not. We need to change the proof ideas.}
To this end, let $\tau_{1}(\omega_{2})\le t_{1} \le t_{2}$ and $A \in \G_{t_{1}}$ be given arbitrarily. Note that
$\sum_{j\in \ss}\one_{\{\La(\tau_{1}(\omega_{2}), \omega_{2}) = j\}} =1$
and hence
\begin{displaymath}
\theta(t,\omega_{1}) = \sum_{j\in \ss}\theta(t,\omega_{1})
\one_{\{\La(\tau_{1}(\omega_{2}), \omega_{2}) =j \}}
=\sum_{j\in \ss}\theta_{j}(t,\omega_{1})
\end{displaymath} for any $t \ge 0$.
% Then $\one_{B}\one_{\{ \tau_{1}\le t_{1}\}} \in \F_{\tau_{1}}$ and  we can compute {\red (if $\Q_{\omega}(\cdot)$ is a conditional probability of $\P^{(1)}$ given $\F_{\tau_{1}}$)}
% \begin{align*}    \E^{\P^{(1)}}[\theta(t_{2}) \one_{A}\one_{B}\one_{\{ \tau_{1}\le t_{1}\}} ]=\E^{\P^{(1)}}[\E^{\Q_{\cdot}}[\theta(t_{2}) \one_{A}\one_{B}\one_{\{ \tau_{1}\le t_{1}\}}] ]  =   \E^{\P^{(1)}}[\E^{\Q_{\cdot}}[\theta(t_{2}) \one_{A}] \one_{B}\one_{\{ \tau_{1}\le t_{1}\}}]  \end{align*}
Therefore, we can compute % it follows from \eqref{eq-theta-j-martingale} that
\begin{align*}
     \E^{\Q_{1}}\big[\theta(t_{2}) \one_{A}\big] &=\int_{A}\theta(t_{2})\Q_{1}(\d\omega_{1}) %  \\&
     =\int_{A}\sum_{j\in \ss}\theta(t_{2})\one_{\{\La(\tau_{1}) =j \}}\sum_{l\in \ss}  \one_{\{\La(\tau_{1}) =l \}}\P_{l}^{(Z(\tau_{1}))}(\d \omega_{1}) \\
    & =  \int_{A}\sum_{j\in \ss}\theta(t_{2}) \one_{\{\La(\tau_{1}) =j \}}\P_{j}^{(Z(\tau_{1}))}(\d \omega_{1}) % \\ &
    =  \sum_{j\in \ss}\int_{A}\theta(t_{2})\one_{\{\La(\tau_{1})=j \}} \P_{j}^{(Z(\tau_{1}))}(\d \omega_{1})  \\
    &=  \sum_{j\in \ss}\int_{A}\theta_{j}(t_{2})  \P_{j}^{(Z(\tau_{1}))}(\d \omega_{1}) %  \\&
     =  \sum_{j\in \ss}\int_{A}\theta_{j}(t_{1}) \P_{j}^{(Z(\tau_{1}))}(\d \omega_{1})  \quad \qquad \text{(by \eqref{eq-theta-j-martingale})}\\
    &= \sum_{j\in \ss}\int_{A}\theta_{j}(t_{1})\one_{\{\La(\tau_{1})=j \}} \sum_{l\in \ss} \one_{\{\La(\tau_{1}=l \}}\P_{l}^{(Z(\tau_{1}))}(\d \omega_{1})  = \int_{A}\theta(t_{1})\Q_{1} (\d\omega_{1})\\
    &  =  \E^{\Q_{1}}\big[\theta(t_{1}) \one_{A}\big].
  %   & = \E^{\P^{(z)}_{k}}[\E^{\Q_{\cdot}}[\theta(t_{1}) \one_{A}\one_{B}\one_{\{ \tau_{1}\le t_{1}\}}] ].
  %  & =\E^{\P^{(z)}_{k}}[\theta(t_{1}) \one_{A}\one_{B}\one_{\{ \tau_{1}\le t_{1}\}} ].
\end{align*} Since    $A \in \G_{t_{1}}$ is arbitrary, we conclude that
% \begin{equation}\label{eq-theta-mg}
$ \E^{\Q_{1}}\big[\theta(t_{2}) | \G_{t_{1}}\big] = \theta(t_{1}), \  \Q_{1}\text{-a.s.},$
% \end{equation}
and so that $\{\theta(t), t \ge\tau_{1}(\omega_{2})\}$ is a martingale under $\Q_{1}$
for each $\omega_{2}\in \Omega_{2}$.\qed
% \fubao{The above expression is very reasonable and beautiful. How to prove it? Let us try to prove it by noting that $\LL_{\La(\tau_{1})}= \sum_{j \in \ss\setminus\{ k\}} \LL_{j}\one_{\{ \La(\tau_{1}) =j\}}$. Let us think over this issue.}

% \begin{Lemma}

% \end{Lemma}

Now we present the main result of this section:

\begin{Theorem} \label{specialsolution}
For any given $(x,y,k)\in \R ^{2d}\times \ss$, there exists a unique martingale solution $\wdh\bP^{(x,y,k)}$ on $C([0,\infty),\R ^{2d})\times D([0,\infty),\ss)$ for the operator $\wdh\A$ starting from $(x,y,k)$.
\end{Theorem}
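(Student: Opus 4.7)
Since $\wdh Q$ does not depend on $(x,y)$, the $\La$ component of $\wdh\A$ is autonomous, and I will construct $\wdh\bP^{(x,y,k)}$ as a product-type measure built by alternating two already-available ingredients: the martingale solution $\Q^{(k)}$ for $\wdh Q$ on $\Omega_2$ (Lemma \ref{lemma-Q}) and the martingale solutions $\P^{(z)}_j$ for $\LL_j$ on $\Omega_1$ (Lemma \ref{Wu-Lem1.1}). The construction proceeds inductively across the switching times $\tau_n$ of \eqref{tau}, which diverge to infinity $\Q^{(k)}$-a.s. by \eqref{eq-tau_n to infty}.

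\textbf{Existence.} Sample $\omega_2\in\Omega_2$ according to $\Q^{(k)}$, fixing the $\La$ trajectory and switching times $\{\tau_n(\omega_2)\}$. Conditionally on $\omega_2$, iteratively define the law of $Z$: on $[0,\tau_1)$ use $\P^{(x,y)}_k$; at each $\tau_n$, conditional on $Z(\tau_n)$, use the measure $\P^{(Z(\tau_n))}_{\La(\tau_n)}$ supplied by Lemma \ref{lem-33} on $[\tau_n,\tau_{n+1})$. An Ionescu--Tulcea style stitching along $\{\tau_n\}$ produces a probability measure on $(\Omega_1,\G)$ for each $\omega_2$, and integrating against $\Q^{(k)}(\d\omega_2)$ yields $\wdh\bP^{(x,y,k)}$ on $(\Omega,\F)$ whose $\La$-marginal is $\Q^{(k)}$ and whose $Z$ conditional law given $\omega_2$ is the described iterative construction.

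\textbf{Martingale property and uniqueness.} For $f\in C^\infty_c(\R^{2d}\times\ss)$, split $f(Z(t),\La(t))-f(x,y,k)$ into a continuous part on the random intervals $[\tau_n,\tau_{n+1}\wedge t)$ and a jump part at the $\tau_n$'s falling in $(0,t]$. Lemma \ref{lem-33} shows the continuous part has compensator $\int_0^t \LL_{\La(s)}f(Z(s),\La(s))\d s$. The jump part, expressed through the counting measure \eqref{eq-n-defn}, has compensator $\int_0^t\!\int_\ss \bigl(f(Z(s),l)-f(Z(s),\La(s))\bigr)\nu(\La(s-);\d l)\d s = \int_0^t \wdh Q f(Z(s),\La(s))\d s$ by \eqref{eq-tilde-mu} and \eqref{eq-Q-hat-operator}. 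Adding yields the required $\wdh\A f$ compensator. For uniqueness, let $\wdh\P$ be any martingale solution for $\wdh\A$ starting from $(x,y,k)$. Using test functions depending only on $k$ (multiplied by a compactly supported cutoff in $(x,y)$ that is sent to $1$), the $\La$-marginal of $\wdh\P$ solves the $\wdh Q$-martingale problem from $k$, hence equals $\Q^{(k)}$ by Lemma \ref{lemma-Q}. Conditioning on $\omega_2$, the regular conditional law of $Z$ on each $[\tau_n(\omega_2),\tau_{n+1}(\omega_2))$ must solve the martingale problem for $\LL_{\La(\tau_n,\omega_2)}$ starting from $Z(\tau_n)$, and Lemma \ref{Wu-Lem1.1} forces it to coincide with the law used in the existence construction.

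\textbf{Main obstacle.} The delicate point is, for uniqueness, extracting from the global $\wdh\A$-martingale property the conditional $\LL_j$-martingale property on the random intervals between consecutive switches, uniformly over the countably many states $j\in\ss$. The independence of $\wdh Q$ from $(x,y)$ is essential here: it allows each $\tau_n$ to be treated as a constant on every slice of $\Omega_1$ conditional on $\omega_2$, precisely the device already exploited in Lemma \ref{lem-33}. The uniform bound \eqref{eq-Q-new-cond} rules out explosion of $\La$, while Lemma \ref{Wu-Lem1.1} rules out explosion of $Z$ on each interval, so no mass is lost as $n\to\infty$ and the pieced-together measure is genuinely defined on all of $\F$.
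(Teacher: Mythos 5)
Your proposal is correct and follows essentially the same route as the paper: piecing together the law of $Z$ across the switching times $\tau_n$ using Lemma \ref{lem-33} (with $\wdh Q$ independent of $(x,y)$ making $\tau_n(\omega_2)$ a constant on each $\Omega_1$-slice), verifying the $\wdh\A$-martingale property by splitting into the continuous part compensated by $\LL_{\La(s)}f$ and the jump part compensated via $\wdt n$ and \eqref{eq-Q-hat-operator}, and using $\tau_n\to\infty$ together with a Tulcea-type extension to pass to all of $\F$. The only cosmetic difference is that you disintegrate over $\omega_2$ first and then integrate against $\Q^{(k)}$, whereas the paper builds the measures $\bP^{(n)}$ directly on the product space via the Stroock--Varadhan concatenation $\otimes_{\tau_n}$; your uniqueness sketch (identify the $\La$-marginal with $\Q^{(k)}$, then identify the conditional laws of $Z$ between switches) matches the argument the paper defers to \cite{XiZ-18}.
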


\para{Proof.} %  We only give the existence proof here, as the proof for uniqueness is very similar to that in  \cite{XiZ-18}.
 % The proof is divided into two steps. The first step establishes the existence of a martingale solution $\wdh \bP$ for the operator $\wdh \A$ starting from $(z,k):=(x,y,k)$ while the second step is deals with uniqueness.
%{\em Step 1.}
For any given $(z,k)\in \R ^{2d} \times \ss$, we % use \cite[Theorem 6.1.2]{Stroock-V} to
define a series of probability measures on $(\Omega,\F)= (\Omega_{1}\times \Omega_{2}, \G \bigotimes \N)$ as follows:
%\fubao{\blue We take the $P$, $\tau$ and $Q_\omega$ in Theorem 6.1.2 of \cite{Stroock-V} as our $\P^{(1)}=\P_{k}^{(z)}\times \Q^{(k)}$, $\tau_{1}$ and $\bP_{\La(\tau_{1})}^{(Z(\tau_{1}))}\times \Q ^{(\La(\tau_{1}))}$ respectively and we take $s=0$ in the aforementioned theorem. Then, in what follows, we try to use Theorem 6.1.2 of \cite{Stroock-V} to prove the desired result.}
%\chao{How do we verify the two conditions of Theorem 6.1.2 of \cite{Stroock-V}?}
% \fubao{Take $\tau$ and $Q_\omega$ in Theorem 6.1.2 of \cite{Stroock-V} as our $\tau_{1}$ and $\bP_{\La(\tau_{1})}^{(Z(\tau_{1}))}\times \Q ^{(\La(\tau_{1}))}$ respectively. I think that (i) and (ii) in Theorem 6.1.2 of \cite{Stroock-V} hold. Do you think so? Let us consider this issue. Moreover, take $P$ in Theorem 6.1.2 of \cite{Stroock-V} as our $\P^{(1)}:=\P_{k}^{(z)}\times \Q^{(k)}$. Thus, by virtue of Theorem 6.1.2 of \cite{Stroock-V}, we obtain a unique probability $\bP^{(2)}:=\bP^{(1)}\otimes{}_{\tau_{1}} \bigl(\bP_{\La(\tau_{1})}^{(Z(\tau_{1}))}\times \Q ^{(\La(\tau_{1}))} \bigr)$, which has the properties stated in Theorem 6.1.2 of \cite{Stroock-V}. In what follows, we can try to use these properties to prove the results we wanted.}
\begin{equation}\label{Pn}
\bP^{(1)}=\bP_{k}^{(z)}\times \Q ^{(k)}, \quad
 \hbox{ and for }n\ge 1, \quad
\bP^{(n+1)}=\bP^{(n)}\otimes{}_{\tau_{n}}
\bigl(\bP_{\La(\tau_{n})}^{(Z(\tau_{n}))}\times
\Q ^{(\La(\tau_{n}))} \bigr),\end{equation}  where $\tau_{n} (\omega_{1}, \omega_{2}) : = \tau_{n}(\omega_{2})$ is the switching time defined in \eqref{tau}, $ \bP_{\La(\tau_{n})}^{(Z(\tau_{n}))}$ is the probability measure on $(\Omega_{1}, \G)$ as in Lemma \ref{lem-33}, and  $\Q ^{(\La(\tau_{n}))}$ is  the regular conditional probability distribution of $\Q^{(k)}$ with respect to $\N_{\tau_{n}}$.
%  \fubao{Sure, $\Q ^{(\La(\tau_{n}))}$ is just the regular conditional probability distribution of $\Q^{(k)}$ with respect to $\N_{\tau_{n}}$.}
% where $\Omega=C([0,\infty),\R ^{2d})\times D([0,\infty),\ss)$, $Z(\cdot)=(X(\cdot),Y(\cdot))$.
Thanks to \cite[Theorem 6.1.2]{Stroock-V}, $\P^{(n+1)} =\P^{(n)} $ on $\F_{\tau_{n}}$.

Let $f\in C_{c}^{2} (\R ^{2d} \times \ss)$. We have
\begin{displaymath}
f(Z(\tau_{1} \wedge t), k) - f(Z(0),k) - \int_{0}^{\tau_{1} \wedge t} \LL_{k} f(Z(s),k) {\d} s
\end{displaymath}
is a martingale with respect to $\bP^{(z)}_{k}$ and hence  $\bP^{(1)}$.
On the other hand, using (\ref{eq-Q-hat-operator}), we can write
%\chao{I changed $\La(s)$ to $\La(s-)$. Please check.}
\begin{align*}
& \int_{0}^{\tau_{1} \wedge t}   \wdh Q f(Z(s),\La(s)){\d} s  \\
  &  \ \   = \int_{0}^{\tau_{1}\wedge t}  \int_{\ss} [f(Z(s), l) - f(Z(s),   {\La(s-)})] \nu(   {\La(s-)}, {\d} l) {\d} s \\
     &\ \   = -\int_{0}^{\tau_{1}\wedge t} \!\! \int_{\ss} [f(Z(s), l) - f(Z(s),   {\La(s-)})] \big(n ({\d} s , {\d} l) - \nu(   {\La(s-)}, {\d} l) {\d} s\big) \\
    &  \ \  \ \ + \int_{0}^{\tau_{1}\wedge t}  \int_{\ss} [f(Z(s), l) - f(Z(s),   {\La(s-)})] n ({\d} s , {\d} l)\\
    & \ \  =  -\int_{0}^{\tau_{1}\wedge t}  \int_{\ss} [f(Z(s), l) - f(Z(s),   {\La(s-)})] \widetilde{n}({\d} s , {\d} l)    \\
    &   \ \  \ \  + f(Z(\tau_{1} \wedge t), \La (\tau_{1}\wedge t)) - f(Z(\tau_{1}\wedge t), \La(\tau_{1}\wedge t-)).
 \end{align*}
Then using the definitions of the operators $\wdh\A$, $\LL_{k}$ and $\wdh Q$,  we have
 \begin{align}\label{eq:M1-decomposition}
\nonumber \wdh M_{\tau_{1}\wedge t}^{(f)} & = f (Z(\tau_{1}\wedge t), \La(\tau_{1}\wedge t)) -  f(Z(0),\La(0)) - \int_{0}^{\tau_{1} \wedge t}  {\wdh\A} f(Z(s),\La(s)) {\d} s \\
   \nonumber  &  = f(Z(\tau_{1} \wedge t), \La (0)) - f(Z(0),\La(0)) - \int_{0}^{\tau_{1} \wedge t} \LL_{\La(0)} f(Z(s),\La(0)) {\d} s \\
 \nonumber     &  \quad +  f(Z(\tau_{1}\wedge t), \La(\tau_{1}\wedge t))  -  f(Z(\tau_{1} \wedge t), \La (0))  \\
 \nonumber     &  \quad + \int_{0}^{\tau_{1} \wedge t} \LL_{\La(0)} f(Z(s),\La(0)) {\d} s - \int_{0}^{\tau_{1} \wedge t}  {\wdh {\A}} f(Z(s),\La(s)) {\d} s  \\
 \nonumber    &   = f(Z(\tau_{1} \wedge t), \La (0)) - f(Z(0),\La(0)) - \int_{0}^{\tau_{1} \wedge t} \LL_{\La(0)} f(Z(s),\La(0)) {\d} s \\
 \nonumber     &  \quad +     f(Z(\tau_{1}\wedge t), \La(\tau_{1}\wedge t))  -  f(Z(\tau_{1} \wedge t), \La (0))  - \int_{0}^{\tau_{1} \wedge t} {\wdh Q} f(Z(s), \La(s)){\d} s\\
     &  =  f(Z(\tau_{1} \wedge t), \La (0)) - f(Z(0),\La(0)) - \int_{0}^{\tau_{1} \wedge t} \LL_{\La(0)} f(Z(s),\La(0)) {\d} s \\
 \nonumber    &    \ \ \ +  \int_{0}^{\tau_{1}\wedge t}  \int_{\ss} [f(Z(s), l) - f(Z(s),\La(s-))]  \widetilde{n}({\d} s , {\d} l).
\end{align}    Recall from \eqref{eq-tilde-mu} that $\widetilde{n}$ is a martingale measure with respect to $\Q ^{(k)}$ and hence $\bP^{(1)}$. Thus it follows that $\wdh  M_{\tau_{1}\wedge \cdot}^{(f)} $
is a martingale with respect to $ \bP^{(1)}$.

Next,  thanks to Lemma \ref{lem-33},
\begin{displaymath}
f(Z(\tau_{2} \wedge t), \La (\tau_{1})) - f(Z(\tau_{1}),\La(\tau_{1})) - \int_{\tau_{1} }^{\tau_{2} \wedge t} \LL_{\La(\tau_{1})} f(Z(s),\La(\tau_{1})) {\d} s, \quad t\ge \tau_{1}
\end{displaymath}  is a martingale with respect to $\P_{\La(\tau_{1})}^{(Z(\tau_{1}))}$ and hence also $\P_{\La(\tau_{1})}^{(Z(\tau_{1}))}\times  \Q ^{(\La(\tau_{1}))}$. Similar calculations as those in \eqref{eq:M1-decomposition} give that
\begin{align}\label{eq-M2-decomposition}\nonumber f& (Z(\tau_{2}\wedge t), \La(\tau_{2}\wedge t)) - f(Z(\tau_{1}), \La(\tau_{1})) - \int_{\tau_{1}}^{\tau_{2}\wedge t} {\wdh\A} f (Z(s),\La(s)) {\d} s\\ &= f(Z(\tau_{2} \wedge t), \La (\tau_{1})) - f(Z(\tau_{1}),\La(\tau_{1})) - \int_{\tau_{1} }^{\tau_{2} \wedge t} \LL_{\La(\tau_{1})} f(Z(s),\La(\tau_{1})) {\d} s \\ \nonumber & \quad+ \int_{\tau_{1}}^{\tau_{2}\wedge t} [f(Z(s),l)- f(Z(s),\La(s-))]\wdt n(\d s, \d l), \quad t \ge \tau_{1}.\end{align} Since $\Q ^{(\La(\tau_{1}))}$ is a regular conditional probability distribution of $\Q^{(k)}$ with respect to $\N_{\tau_{1}}$, it follows that $\wdt n(t,\cdot), t\ge \tau_{1}$ is a martingale measure with respect to $\Q^{\La(\tau_{1})}$. Consequently the expression in the last line of \eqref{eq-M2-decomposition} is a martingale with respect to $\Q^{\La(\tau_{1})}$ and hence also $\P_{\La(\tau_{1})}^{(Z(\tau_{1}))}\times  \Q ^{(\La(\tau_{1}))}$.
Then the left hand side of \eqref{eq-M2-decomposition}, which is equal to $\wdh M^{(f)}_{\tau_{2}\wedge t} -\wdh M_{\tau_{1}\wedge t}^{(f)}$,  is a martingale with respect to  $\P_{\La(\tau_{1})}^{(Z(\tau_{1}))}\times \Q ^{(\La(\tau_{1}))}$.
% \chao{I think we  need to be careful here. The first line of the equation is a martingale with respect to $\P_{\La(\tau_{1})}^{(Z(\tau_{1}))}$. Is it obvious that   it is also a martingale with respect to the product measure $\P_{\La(\tau_{1})}^{(Z(\tau_{1}))}\times \Q^{(\La(\tau_{1}))}$? And how to show that the second line is a martingale with respect to $\Q^{\La(\tau_{1})}$ and hence the product measure as well? We have shown that for each $\omega_{2}$, $\theta(t,\omega_{1}, \omega_{2})$ is a martingale with respect to $\P_{1}$. Then if $B \in \G_{t_{1}}$ and $C\in \N_{t_{1}}$, then \begin{align*}\int_{B\times C} \theta(t_{2})\d\P_{1}\times \d\P_{2}= \int_{C} \int_{B} \theta(t_{2}) \d \P_{1} \d\P_{2} = \int_{C} \int_{B} \theta(t_{1}) \d \P_{1} \d\P_{2} =\int_{B\times C} \theta(t_{1})\d\P_{1}\times \d\P_{2}.\end{align*} This is enough to obtain the martingality with respect to the production measure because $\F_{t}$ is generated by rectangles of the form $B\times C$ with $B \in \G_{t}$ and $C\in \N_{t}$.}
% \fubao{Note that $\wdt n(\d s, \d l)$ is a martingale measure with respect to $\Q^{(k)}$, and also to $\P^{(1)}:=\P_{k}^{(z)}\times \Q^{(k)}$. Can this help us? Let us think over it.}
% Notice that the above displayed equation is equal to $\wdh M^{(f)}_{\tau_{2}\wedge t} -\wdh M_{\tau_{1}\wedge t}^{(f)}.$
Therefore in view of \cite[Theorem 6.1.2]{Stroock-V},
$\wdh  M^{(f)}_{\tau_{2}\wedge \cdot}$ is a martingale with respect to
$\bP^{(2)}$.
In a similar fashion, we can show that
$\wdh  M^{(f)}_{\tau_{n}\wedge \cdot}$ is a martingale with respect to
$\bP^{(n)}$ for any $n \ge 1$.

Next we show that
$\lim_{n\to \infty} \bP^{(n)} \{ \tau_{n } \le t \} =0$ for any $t\ge 0$. To this end, we consider  functions of the form $f(x,k) = g(k)$, where $g \in \B(\ss)$. Then $M^{(f)}_{\tau_{n}\wedge \cdot}$ is a $\bP^{(n)}$ martingale. But for any $t \ge 0$, $$\wdh M^{(f)}_{t} = N^{(g)}_{t} = g(\La (t) ) - g(\La(0)) - \int_{0}^{t} \wdh Q g(\La(s)) {\d} s$$  is  a martingale with respect to $\Q ^{(k)}$. In particular, $ N^{(g)}_{\tau_{n}\wedge \cdot}$  is  a martingale with respect to $\Q ^{(k)}$  as well.  On the other hand, for any $A \in \N$, we define
$\wdh \Q  (A) : = \bP^{(n)}\{ \Omega_{1} \times A \}$.  Then $N^{(g)}_{\tau_{n}\wedge \cdot}$ is a martingale with respect to $\wdh \Q $. By the uniqueness result for the martingale problem for $\wdh Q$ in Lemma \ref{lemma-Q}, we have $\wdh \Q  = \Q ^{(k)}$.
Therefore it follows from \eqref{eq-tau_n to infty} that \begin{displaymath}
\bP^{(n)} \{ \tau_{n } \le t \} = \wdh \Q \{ \tau_{n } \le t   \} =  \Q ^{(k)}\{ \tau_{n } \le t   \}  \to 0, \hbox{ as  } n \to \infty.
\end{displaymath}

 Recall that the probabilities $\bP^{(n)}$ constructed in (\ref{Pn}) satisfies $\bP^{(n+1)} = \bP^{(n)}$ on $\F_{\tau_{n}}$.
Hence by Tulcea's extension theorem (see, e.g., \cite[Theorem 1.3.5]{Stroock-V}),  there exists a
unique $\wdh \bP$ on $(\Omega,\F)$ such that $\wdh \bP$ equals $\bP^{(n)}$ on
${\F}_{\tau_{n}}$.
Thus it follows that $\wdh M^{(f)}_{\tau_{n}\wedge \cdot}$ is a martingale with respect to $\wdh \bP$ for every $n \ge 1$. In addition,   for any $t \ge 0$, we have \begin{equation}\label{eq-tau-n-to-infty}
 \wdh  \bP \{\tau_{n}  \le t \} = \bP^{(n)} \{\tau_{n}  \le t \}\to 0, \hbox{ as  } n \to \infty.
\end{equation} Thus $\tau_{n} \to \infty$ a.s. $\wdh\bP$ and hence $\wdh M^{(f)}_{\cdot}$ is a martingale with respect to $\wdh \bP$. This establishes that $\wdh \bP$
is the desired martingale solution staring from $(z,k)$ to the martingale problem for $\wdh \A$.  When we
wish to emphasize the initial data dependence $Z(0)=z$ and
$\La(0)=k$, we write this martingale solution as $\wdh \bP^{(z,k)}$.  This establishes the existence of a martingale solution for the operator $\A$. The proof of uniqueness is very similar to that in  \cite{XiZ-18} and we shall omit the details here for brevity. \qed
% This completes the proof. \qed

\section{General state-dependent switching case}\label{General}

In this section we construct the martingale solution for the general case.
Throughout the remainder of the section, $\wdh \P^{(z,k)}$  (or simply $\wdh \P$ if there is no need to emphasize the initial condition) denotes the unique martingale solution to $\wdh \A$; c.f. Theorem \ref{specialsolution}. The corresponding expectation is denoted by $\wdh \E^{(z,k)}$ or $\wdh \E$.
\begin{Lemma}\label{lem-n-mg-measure} Let Assumption \ref{sgcvq} hold. % and \ref{Assumption-q-ratio-bdd}.
Then the compensated   random measure $\wdt n$ of \eqref{eq-tilde-mu} is a martingale measure with respect to $\wdh \bP$.
\end{Lemma}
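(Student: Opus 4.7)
The plan is to exploit the construction of $\wdh\bP$ via \eqref{Pn} together with the fact, recalled just below \eqref{eq-tilde-mu}, that $\wdt n$ is a martingale measure with respect to $\Q^{(k)}$ on the jump space $\Omega_2$. The crucial structural feature is that $\wdh Q=(\wdh q_{kl})$ does not depend on the continuous state $(x,y)$, so the jump mechanism of $\La$ is completely decoupled from the continuous dynamics of $Z$ in the construction of $\wdh\bP$.

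First I would fix $A\subset\ss$ and set $\xi(t):=\wdt n(t,A)$. By Assumption \ref{sgcvq}(iv) the compensator rate is uniformly bounded, $\sup_k\nu(k;A)\le H$, which in combination with the marginal identification $\wdh\bP\circ\La^{-1}=\Q^{(\La(0))}$ (established in the last step of the proof of Theorem \ref{specialsolution}) yields $\wdh\E[n(t,\ss)]\le Ht<\infty$, hence $\xi(t)\in L^1(\wdh\bP)$ for every $t\ge 0$.

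To verify the martingale identity $\wdh\E[\xi(t)-\xi(s)\mid\F_s]=0$ for $0\le s<t$, I would invoke the Markov property of $(Z,\La)$ under $\wdh\bP$, which is an immediate consequence of the uniqueness of the martingale problem established in Theorem \ref{specialsolution} together with standard arguments from \cite{Stroock-V}. The conditional expectation then reduces to $\wdh\E^{(Z(s),\La(s))}[\wdt n(t-s,A)]$. Since $\wdt n(\cdot,A)$ depends only on the $\La$-coordinate, and since the $\La$-marginal of $\wdh\bP^{(z,k)}$ equals $\Q^{(k)}$ independently of $z$ (this uses both the construction \eqref{Pn}, where each $\Q^{(\La(\tau_n))}$ factor acts purely on $\Omega_2$, together with the uniqueness assertion in Lemma \ref{lemma-Q}), this further reduces to $\E^{\Q^{(\La(s))}}[\wdt n(t-s,A)]=0$, the last equality holding because $\wdt n$ is a $\Q^{(k)}$-martingale measure with respect to $\{\N_t\}$.

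The main hurdle is the $z$-uniform identification of the $\La$-marginal of $\wdh\bP^{(z,k)}$ as $\Q^{(k)}$. I would establish this by applying the martingale problem for $\wdh\A$ to test functions of the form $h_R(x,y,k)=\phi_R(x,y)g(k)$ with $\phi_R\in C_c^\infty(\R^{2d})$ approximating the constant function $1$ and $g\in\B_b(\ss)$ (these genuinely lie in $C_c^\infty(\R^{2d}\times\ss)$ as defined in the paper), then passing to the limit $R\to\infty$ via dominated convergence (justified by the a.s.\ local boundedness of the continuous path $Z$ and the boundedness of $g$ and $\wdh Qg$). This gives $N_t^{(g)}$ as an $\{\F_t\}$-martingale under $\wdh\bP$ for every $g\in\B_b(\ss)$, and Lemma \ref{lemma-Q}'s uniqueness then pins down the $\La$-marginal as claimed, completing the reduction.
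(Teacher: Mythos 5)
Your proof is correct in substance but follows a genuinely different route from the paper's. The paper's argument stays entirely at the level of test functions: it takes from the construction in Theorem \ref{specialsolution} that $N_t^{(f)}$ is an $\{\F_t\}$-martingale under $\wdh\bP$ for every $f\in\B_b(\ss)$, rewrites $N_t^{(f)}$ algebraically as $\int_0^t\int_\ss \bigl(f(l)-f(\La(s-))\bigr)\,\wdt n(\d s,\d l)$, and then invokes the disintegration argument from the proof of Lemma 2.4 of Shiga--Tanaka to pass from the martingale property of these integrals to the martingale-measure property of $\wdt n$ itself. You instead verify the martingale identity for $\wdt n(\cdot,A)$ directly, combining the Markov property of $(Z,\La)$ under $\wdh\bP$ (which indeed follows from the well-posedness in Theorem \ref{specialsolution}) with the identification of the $\La$-marginal of $\wdh\bP^{(z,k)}$ as $\Q^{(k)}$. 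Both arguments ultimately rest on the same fact --- that $\La$-only functionals under $\wdh\bP$ are governed by $\wdh Q$ --- which the paper, like you, extracts by testing against functions $g(k)$ that are not literally in $C_c^\infty(\R^{2d}\times\ss)$; your truncation $\phi_R(x,y)g(k)$ is the more careful way to do this. One small caveat: dominated convergence as $R\to\infty$ is not automatic, because $\langle y,\nabla_x\phi_R\rangle$ and $\langle c(x,y,k)y+\nabla_xV(x,k),\nabla_y\phi_R\rangle$ need not be dominated uniformly in $R$ on the annuli where $\nabla\phi_R\neq0$ ($c$ and $\nabla V$ are only continuous, with no growth bound assumed). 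Localizing with $T_R:=\inf\{t:|Z(t)|\ge R\}$, using non-explosiveness and the uniform bound $|N^{(g)}_{t}|\le 2\|g\|+2H\|g\|t$ for fixed $t$, repairs this cleanly. What your approach buys is a self-contained verification that does not lean on the external Shiga--Tanaka lemma; what it costs is the extra Markov-property and marginal-identification machinery, which the paper's shorter route avoids.
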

\para{Proof.} By virtue of {Theorem}~\ref{specialsolution},
we know that for each function $f \in \B_{b}(\ss)$,
\begin{equation}
\label{eq-Nt-f-martingale}
N_{t}^{(f)}=f(\La(t))-f(\La(0))-  \int_{0}^{t} \wdh \A
f(\La(s)){\d} s= f(\La(t))-f(\La(0))- \int_{0}^{t} \wdh{Q}
f(\La(s)){\d} s
\end{equation} is an $\{\F_{t}\}$-martingale with respect to $\wdh{\bP}$. It is easy to see that % for any function $f \in \B(\ss)$,
$$f(\La(t))-f(\La(0))= \sum_{s \le t}[f(\La(s))-f(\La(s-))]=\int_{0}^{t}\int_{\ss}\bigl(f(l)-f(\La(s-))\bigr)
n({\d}s,{\d}l),$$ where % ${\d} l$ is the counting measure on $\ss$ and $\nu(\cdot,\cdot)$ is defined in (\ref{nu}).
$n$ is the random counting measure on $[0,\infty)\times \ss$ defined in \eqref{eq-n-defn}.  On the other hand,
 \begin{align*} \int_{0}^{t} \wdh{Q} f(\La(s)){\d} s & = \int_{0}^{t} \wdh{Q} f(\La(s-)){\d} s  = \int_{0}^{t} \sum_{k\in \ss}  \one_{\{ \Lambda(s-)  =k\}}  \sum_{l \neq k}\wdh q_{kl}[f(l) -f(k)] \d s\\
      & =  \int_{0}^{t}\int_{\ss} \sum_{k\in \ss}  \one_{\{ \Lambda(s-)  =k\}}[f(l) -f(k)]  \nu(k;\d l) \d s \\ &=  \int_{0}^{t}\int_{\ss}  [f(l) -f(\Lambda(s-))]  \nu(\Lambda(s-);\d l) \d s.\end{align*} Putting the above equations into \eqref{eq-Nt-f-martingale}, we see that for each $f \in \B_{b}(\ss)$, \begin{align*} N_{t}^{(f)} & = \int_{0}^{t} \int_{\ss}
\bigl(f(l)-f(\La(s-))\bigr)\bigl(n({\d}s,{\d}l)-\nu\bigl(\La(s-),{\d} l\bigr){\d} s\bigr) \\ &=\int_{0}^{t} \int_{\ss}
\bigl(f(l)-f(\La(s-))\bigr)\wdt n({\d}s,{\d}l) \end{align*} is an $\{\F_{t}\}$-martingale with respect to $\wdh{\bP}$.
%Meanwhile, it follows from (\ref{nu}) and (\ref{eq-Q-hat-operator}) that\chao{I have some some question concerning these terms: \begin{align*} \int_{0}^{t} \wdh{Q}
%f(\La(s)){\d} s & =\int_{0}^{t} \sum_{k\in \ss} \wdh Qf(k) \one_{\{ \Lambda(s)  =k\}} \d s = \int_{0}^{t} \sum_{k\in \ss}  \one_{\{ \Lambda(s)  =k\}}  \sum_{l \neq k}\wdh q_{kl}[f(l) -f(k)] \d s\\    & =  \int_{0}^{t}\int_{\ss} \sum_{k\in \ss}  \one_{\{ \Lambda(s)  =k\}}[f(l) -f(k)]  \nu(k;\d l) \d s =  \int_{0}^{t}\int_{\ss}  [f(l) -f(\Lambda(s))]  \nu(\Lambda(s);\d l) \d s\end{align*}}
%$$N_{t}^{(f)}=f(\La(t))-f(\La(0))- {\red\int_{0}^{t} \int_{\ss} f(\La(s))\nu\bigl(\La(s),{\d} l\bigr){\d} s}.$$ Therefore, we obtain that for each function $f \in \B(\ss)$,
Then, by the proof of \cite[Lemma 2.4]{ShigaT-85}, we %know that for any $A \subset \ss$,
%\begin{equation}\label{eq2-tilde-mu} \widetilde{n}(t,A)=n(t,A)-\int_{0}^{t} \nu(\La(s-);A){\d} s \end{equation}
 conclude that $\wdt n$ is a martingale measure with respect to $\wdh \bP$.
% \fubao{Another proof is given here. Please check!} Combining these observations with (\ref{eq1-Mt}) and (\ref{eq2-Mt}), and using (\ref{eq-Mt-exponential}), we obtain the following integral with respect to martingale measure,
%\begin{equation}\label{eq-Mt-ito}   M_{t}\bigl(Z(\cdot),\La(\cdot)\bigr)-1=\int_{[0,t] \times \ss}
%M_{s-}\bigl(Z(\cdot),\La(\cdot) \bigr) \biggl[\frac{q_{\La(s-)l}
%\bigl(Z(s)\bigr)}{\wdh q_{\La(s-) l}}-1\biggr] \widetilde{n}({\d} s, {\d} l).
% \end{equation}
\qed

To proceed, let us define $$g(k,l,z):= \begin{cases} \frac{q_{kl}(z)}{\wdh q_{kl}}\one_{\{ \wdh q_{kl} > 0\}}, & \text{ if }k \neq l, z\in \R^{2d},\\ 0, & \text{ if }k = l, z\in \R^{2d},\end{cases}$$ and $$ \xi(t): = \int_{[0,t]\times \ss} [g(\La(s-), l,Z(s)) -1]\wdt n(\d s, \d l), \quad t\ge 0,$$ where $\wdh q_{kl}$ is defined in \eqref{eq-Q-hat}.  Thanks to \eqref{eq-Q-new-cond}, we have $|g(k,l,z)| \le 1$ for all $k,l\in \ss$  and $z\in \R^{2d}$. In addition,   for each $k \in \ss$, $\nu(k,\ss) = \sum_{l \in \ss\setminus \{k\}}\wdh q_{kl} = \wdh q_{k} \le H $.   Hence it follows   that $$\wdh\E\biggl[\int_{[0,t]\times \ss} |g(\La(s-), l,Z(s))-1|\nu(\La( s-), \d l)\d s\biggr] \le \wdh\E\bigg[\int_{0}^{t} 2 \,\nu(\La(s-), \ss) \d s \biggr] \le 2 Ht.  $$
% \chao{Suppose \eqref{eq-q-ratio-bdd} is replaced by $\frac{q_{kl}(z)}{\wdh q_{kl}} \le \kappa k$ for some positive constant $\kappa$, then the above equation changes to  $$\wdh\E\biggl[\int_{[0,t]\times \ss} |g(\La(s-), l,Z(s))-1|\nu(\La( s-), \d l)\d s\biggr] \le \wdh\E\bigg[\int_{0}^{t} (\kappa \La(s-) +1)\,\nu(\La(s-), \ss) \d s \biggr] = \frac{1 }{2}\int_{0}^{t}(\kappa \wdh\E[\La(s-)]+1)\d s.$$ Then we need to impose conditions so that $\int_{0}^{t}(\kappa \wdh\E[\La(s-)]+1)\d s < \infty$.}
 Therefore it follows from \cite[Section 2.3]{IkedaW-89} that $\xi$ is a martingale under $\wdh\P$.

\begin{Lemma}\label{Mtmartingale} Let Assumption \ref{sgcvq} hold. %  and \ref{Assumption-q-ratio-bdd}.
Then the process $M_{\cdot}$ defined by %  exponential local martingale $M$ of $\xi$:
\begin{equation}
\label{eq-Mt-sde}
M_{t} : = 1+ \int_{0}^{t} M_{s-}  \d \xi(s) = 1+ \int_{[0,t]\times \ss}  M_{s-}  [g(\La(s-), l,Z(s)) -1]\wdt n(\d s, \d l), \ \ t \ge 0,
\end{equation} is a square-integrable    martingale with $\wdh\E[M_{t}] =1$ for all $t \ge 0$.
 % \chao{I deleted the other representation formula for $M_{t}$ since it is not used in later presentations.}
%  Moreover, we have
%\begin{equation}\label{eq-Mt}
%M_{t}= \prod_{i=0}^{n(t)-1} \frac{q_{\La (\tau_{i})\La (\tau_{i+1})} \bigl(Z(\tau_{i+1})\bigr)}{\wdh q_{\La (\tau_{i})\La (\tau_{i+1})}}\cdot \exp\bigg\{-\int_{0}^{t} [q_{\La(s-)}(Z(s)) - \wdh q_{\La(s-)}] \d s \biggr\},
%\end{equation} where $n(t)= \max \{i \in {\mathbb N}: \tau_{i} \le t\}$ and $\{\tau_{i}\}$ is the sequence of stopping times defined in \eqref{tau}. In case $n(t)=0$, we use the convention that $\prod_{i=0}^{-1} a_{i}:=1$ in \eqref{eq-Mt}.
\end{Lemma}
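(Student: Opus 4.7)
The plan is to exploit the stochastic-exponential structure of $M$: the SDE \eqref{eq-Mt-sde} is linear in $M_{s-}$ and driven by the martingale measure $\wdt n$, so existence and uniqueness follow from standard arguments for linear SDEs against compensated random measures, and the resulting solution is automatically a local martingale. The real work is to upgrade this to a square-integrable true martingale with $\wdh\E[M_t] = 1$, which I intend to accomplish via localization together with a Gronwall-type $L^2$ estimate.

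First I would record two structural observations. Because $q_{kl}(z) \le \wdh q_{kl}$ by the definition \eqref{eq-Q-hat}, we have $g(k,l,z) \in [0,1]$, so the jumps of $\xi$ lie in $[-1,0]$. Consequently the Dol\'eans-Dade exponential $M$ is nonnegative, and being a nonnegative local martingale starting from $1$ it is a supermartingale under $\wdh\P$. This already gives $\wdh\E[M_t] \le 1$; the remaining task is equality, which is equivalent to the true martingale property.

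Next I would localize via $T_n := \inf\{t \ge 0 : M_t \ge n\}$. Since $M$ is \cadlag{} and finite (as a nonnegative supermartingale), $T_n \to \infty$ $\wdh\P$-a.s. On $[0,T_n]$ the integrand $M_{s-}(g-1)$ is bounded by $n$, so $M^{T_n}$ is a square-integrable martingale and Itô's isometry for integrals against $\wdt n$ yields
\begin{equation*}
\wdh\E\bigl[(M_{t\wedge T_n})^2\bigr] = 1 + \wdh\E\!\left[\int_0^{t\wedge T_n} M_{s-}^2 \int_\ss (g(\La(s-),l,Z(s))-1)^2 \,\nu(\La(s-),\d l)\,\d s\right].
\end{equation*}
Using $|g-1|^2 \le 1$ together with the uniform bound $\nu(\La(s-),\ss) = \wdh q_{\La(s-)} \le H$ from Assumption \ref{sgcvq}(iv), this reduces to
\begin{equation*}
\wdh\E\bigl[(M_{t\wedge T_n})^2\bigr] \le 1 + H \int_0^t \wdh\E\bigl[(M_{s\wedge T_n})^2\bigr]\,\d s,
\end{equation*}
and Gronwall's inequality produces the uniform bound $\wdh\E[(M_{t\wedge T_n})^2] \le e^{Ht}$.

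Finally, Fatou's lemma gives $\wdh\E[M_t^2] \le e^{Ht} < \infty$, and the uniform $L^2$ bound forces $\{M_{t\wedge T_n}\}_n$ to be uniformly integrable. Passing $n\to\infty$ in the stopped identity $\wdh\E[M_{t\wedge T_n}\one_A] = \wdh\E[M_{s\wedge T_n}\one_A]$ for $A \in \F_s$ and $s\le t$ identifies $M$ as a genuine martingale with $\wdh\E[M_t] = 1$. The main obstacle is controlling the quadratic variation uniformly in the localization; this is precisely where the uniform bound $H$ in Assumption \ref{sgcvq}(iv) on the total switching intensities is indispensable, allowing the clean Gronwall step above.
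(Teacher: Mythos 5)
Your proposal is correct and follows essentially the same route as the paper: localize at $T_n=\inf\{t: M_t\ge n\}$, obtain a uniform $L^2$ bound via the isometry for integrals against $\wdt n$ together with $\nu(k,\ss)\le H$ and Gronwall, then conclude the true martingale property from uniform integrability. The only (harmless) deviations are cosmetic: you use the exact isometry where the paper uses the cruder bound $(1+I)^2\le 2+2I^2$ with $|g-1|^2\le 4$ (hence your cleaner constant $e^{Ht}$ versus $2e^{8Ht}$), and you justify $T_n\to\infty$ from the finiteness of the c\`adl\`ag Dol\'eans-Dade exponential rather than from the paper's Chebyshev estimate $n^2\,\wdh\P\{T_n\le t\}\le 2e^{8Ht}$.
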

\para{Proof.}   Note that $\Delta \xi(t) > -1$,
 %\chao{Do we need to assume that $q_{kl}(z) > 0$ {\em for all} $k\neq l$ and $z\in \R^{2d}$ for this claim? If not, then  it is possible that $$\Delta \xi(t) = g(\La(\tau_{i}),\La(\tau_{i+1}), Z(\tau_{i+1}) ) -1 =\frac{q_{\La(\tau_{i}),\La(\tau_{i+1})}(Z(\tau_{i+1}))}{\wdh q_{\La(\tau_{i}),\La(\tau_{i+1})}}=0-1 =-1?$$ }
 %thanks to \eqref{eq-q-ratio-bdd},
 which, in turn, implies that $M_{t}$ is strictly positive.
   %  Moreover, \eqref{eq-q-ratio-bdd} also implies that $M$ is a square-integrable martingale with respect to $\wdh \P$.}
%\chao{This is Not clear to me.   According to the Girsanov Theorem (Theorem 12.21 of \cite{NOksendalP-09}), we should check that \begin{displaymath}
%\E\bigg[\exp\bigg\{\frac12  \int_{0}^{T} \int_{\ss} [g(\La(s-),l,Z(s)) \log g(\La(s-),l,Z(s)) + 1- g(\La(s-),l,Z(s))] \nu(\La(s-),\d l) \d s\bigg\}\bigg] < \infty
% \end{displaymath} for each $T \ge 0$.}
% \chao{Here is another attempt in answering my question above.
For each $n \in \NN$, let $T_{n}: = \inf\{t \ge0: |M(t)| > n\}$.  Apparently $M_{\cdot\wedge T_{n}}$ is a $\wdh \P$-martingale. Moreover,  thanks to \eqref{eq-Mt-sde}, we have \begin{align*}
M_{t\wedge T_{n}}    = 1 +   \int_{[0,t]\times \ss} \one_{\{s \le T_{n} \}} M_{s-}  [g(\La(s-), l,Z(s)) -1]\wdt n(\d s, \d l). \end{align*} Then we have  \begin{align*}
\wdh\E[ M^{2}_{t\wedge T_{n}}]  &  \le 2 + 2 \wdh\E\Bigg[ \bigg|\int_{[0,t]\times \ss} \one_{\{s \le T_{n} \}} M_{s-}  [g(\La(s-), l,Z(s)) -1]\wdt n(\d s, \d l)\bigg|^{2} \Bigg]   \\
  & \le 2 + 2 \wdh\E\bigg[\int_{[0,t]\times \ss} \one_{\{s \le T_{n} \}} M^{2}_{s-}  [g(\La(s-), l,Z(s)) -1]^{2} \nu(\La(s-), \d l)\d s\bigg]\\
  & \le 2 + 2 \wdh\E\bigg[\int_{[0,t]\times \ss} \one_{\{s \le T_{n} \}} M^{2}_{s-} 2^{2} \nu(\La(s-),\d l) \d s\bigg]\\
  & = 2 + 8 H \int_{0}^{t} \wdh \E[\one_{\{s \le T_{n} \}} M^{2}_{s-}]  \d s
       % \qquad (\text{because }  \nu(\La(s-),\ss) = \frac12)\\
   = 2 +8 H \int_{0}^{t} \wdh \E[  M^{2}_{s\wedge T_{n} }]\d s,
\end{align*} where we used the fact that $ \nu(\La(s-),\ss) \le H$ to derive the first equality above.  Gronwall's inequality then implies that  \begin{equation}\label{eq-Mt^2-finite-mean}
\wdh\E[ M^{2}_{t\wedge T_{n}}]  \le 2 e^{ 8Ht}.
\end{equation} Then it follows that for each $t\ge 0$ fixed, we have %  \begin{displaymath}
$ \sup_{n\in \NN} \wdh \E[ M^{2}_{t\wedge T_{n}}] \le 2 e^{8Ht} < \infty.$  %\end{displaymath}
By the Vall\'ee de Poussion theorem (see, for example, Proposition A.2.2 of \cite{ethi:86}), the sequence $\{M_{t\wedge T_{n}}, n \in \NN\}$ is uniformly integrable.

 On the other hand, on the set $\{T_{n }\le t \}$, we have $ M^{2}_{t\wedge T_{n}}\ge n^{2} $. Therefore we have  from \eqref{eq-Mt^2-finite-mean}  that  \begin{displaymath}
 n^{2} \wdh \P\{T_{n} \le t \} \le \wdh\E[ M^{2}_{t\wedge T_{n}}]  \le 2 e^{8Ht}.
\end{displaymath}
The sequence $T_{n}$  increases to   $T_{\infty} : = \lim_{n\to\infty} T_{n}$, finite or not. % Since $\{T_{n+1} \le t\} \subset \{T_{n} \le t\}$,
Passing to the limit in the above equation as $n\to \infty$ shows that \begin{displaymath}
\wdh\P\{T_{\infty } \le t\} = \wdh\P \bigg\{ \bigcap_{n=1}^{\infty} \{ T_{n}\le t\}\bigg\}=\lim_{n\to\infty}\wdh \P\{ T_{n}\le t\} = 0, \quad \text{ for any }t\ge 0.
\end{displaymath} Consequently, we have $$\wdh\P\{ T_{\infty} < \infty\} =\wdh\P\bigg \{ \bigcup_{m=1}^{\infty} \{ T_{\infty}\le m\}  \bigg\}= \lim_{m\to\infty} \wdh\P\{  T_{\infty}\le m\} =0.$$ This, together with the uniform integrability of the sequence $\{M_{t\wedge T_{n}}, n \in \NN\}$, implies that $M_{\cdot}$ is a $\wdh\P$-martingale. In addition, using  Fatou's Lemma in \eqref{eq-Mt^2-finite-mean} gives us $\wdh\E[ M^{2}_{t}]  \le 2 e^{8Ht}< \infty.$ This completes the proof. \qed

%Finally, we use   It\^o's formula to derive
%\begin{align}
%\label{eq-Mt}
%\nonumber M_{t} &= \exp\bigg\{\int_{[0,t]\times \ss} \log g(\La(s-), l, Z(s)) n(\d s, \d l) \\
%\nonumber        & \qquad \qquad -\int_{[0,t]\times \ss} (g(\La(s-), l, Z(s))  -1) \nu(\La(s-),\d l)\d s \biggr\}\\
%\nonumber & = \prod_{s \le t, \La(s-) \neq \La(s)}\frac{q_{\La(s-)\La(s)}(Z(s))}{\wdh q_{\La(s-)\La(s)}} \cdot \exp\bigg\{-\int_{0}^{t} [q_{\La(s-)}(Z(s)) -\wdh q_{\La(s-)}] \d s \biggr\} \\
%\nonumber & = \prod_{i=0}^{n(t)-1} \frac{q_{\La (\tau_{i})\La (\tau_{i+1})} \bigl(Z(\tau_{i+1})\bigr)}{\wdh q_{\La (\tau_{i})\La (\tau_{i+1})}}\cdot \exp\bigg\{-\int_{0}^{t} [q_{\La(s-)}(Z(s)) - 1/2] \d s \biggr\}.
%\end{align} This completes the proof. \qed

By the martingality   of $M_{t}$  with respect to $\wdh{\bP}$, we can construct another probability measure $\bP$ on
$\Omega=C([0,\infty),\R ^{2d})\times D([0,\infty),\ss)$ such that $\bP$ is a solution to the martingale
problem for the operator $\A$. %without   Assumption~\ref{assumption-qkl=1}.

\begin{Theorem} \label{thm-general} Let Assumption  \ref{sgcvq} hold. %  and \ref{Assumption-q-ratio-bdd}.
Then for any given $(z,k)\in \R ^{2d}\times \ss$, there exists a unique martingale solution $\bP^{(z,k)}$ on $\Omega$ for the operator $\A$ starting from $(z,k)$. In other words, there exists a unique weak  solution $\bP^{(z,k)}$ on $\Omega$ to the system \eqref{XY} and \eqref{La} with initial data $(z,k)$.
\end{Theorem}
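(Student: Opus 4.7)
The plan is to construct $\bP^{(z,k)}$ via a Girsanov-type change of measure using the exponential martingale $M$ of Lemma \ref{Mtmartingale}, and to deduce uniqueness by the reverse transformation combined with the well-posedness of the martingale problem for $\wdh\A$ established in Theorem \ref{specialsolution}. For existence, define the candidate probability measure $\bP = \bP^{(z,k)}$ on $(\Omega,\F)$ by
$$
\frac{\d\bP}{\d\wdh\bP^{(z,k)}}\bigg|_{\F_{t}} = M_{t}, \quad t\ge 0.
$$
Since $M$ is a mean-one $\wdh\bP$-martingale, the above is a consistent family of densities; Kolmogorov's extension then produces a probability measure $\bP$ on $(\Omega,\F)$, and the initial condition $\bP\{(Z(0),\La(0))=(z,k)\}=1$ is inherited from $\wdh\bP^{(z,k)}$.

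The heart of the argument is to verify that for each $f\in C_c^\infty(\R^{2d}\times\ss)$, the process $M_t^{(f)}$ from \eqref{martingale1} is a $\bP$-martingale, which by the change-of-measure identity reduces to showing that $M\cdot M^{(f)}$ is a $\wdh\bP$-martingale. Writing
$$M_t^{(f)} = \wdh M_t^{(f)} - \int_0^t (\A-\wdh\A)f(Z(s),\La(s))\d s,$$
with $(\A-\wdh\A)f(x,y,k) = \sum_{l\neq k}\bigl(q_{kl}(x,y)-\wdh q_{kl}\bigr)\bigl(f(x,y,l)-f(x,y,k)\bigr)$, and applying the integration-by-parts formula for semimartingales, I need to identify the predictable compensator of $[M,M^{(f)}]$. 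Using the jump decomposition of $\wdh M^{(f)}$ as a stochastic integral against $\wdt n$ (cf.\ \eqref{eq:M1-decomposition}) together with the SDE \eqref{eq-Mt-sde} for $M$, a direct computation gives
$$\langle M,\wdh M^{(f)}\rangle_{t} = \int_0^t M_{s-}\sum_{l\neq \La(s-)}\bigl[q_{\La(s-),l}(Z(s))-\wdh q_{\La(s-),l}\bigr]\bigl[f(Z(s),l)-f(Z(s),\La(s-))\bigr]\d s,$$
which equals $\int_0^t M_{s-}(\A-\wdh\A)f(Z(s),\La(s))\d s$. This cancels precisely against the drift correction introduced by passing from $\wdh M^{(f)}$ to $M^{(f)}$, so $M\cdot M^{(f)}$ is a $\wdh\bP$-local martingale. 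Because $f$ has compact support and $\A f$ is bounded, one has $|M_t^{(f)}|\le C(1+t)$, and combined with the $L^2$-bound $\wdh\E[M_t^2]\le 2e^{8Ht}$ from Lemma \ref{Mtmartingale}, a standard localization plus uniform integrability argument upgrades $M\cdot M^{(f)}$ to a true $\wdh\bP$-martingale.

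The main technical obstacle lies in the bookkeeping of $\langle M,\wdh M^{(f)}\rangle$: one must correctly pair jumps of $M$ with jumps of $\wdh M^{(f)}$ and then replace the random counting measure $n$ by its compensator $\nu(\La(s-),\d l)\d s$ using Lemma \ref{lem-n-mg-measure}, while carefully tracking $\wdh q_{kl}$ against $q_{kl}(z)$. For uniqueness, given any martingale solution $\bP'$ of $\A$ starting from $(z,k)$, one runs the Girsanov transformation in reverse: the Dol\'eans-Dade exponential built from $1/g$ on $\{\wdh q_{kl}>0\}$ (which carries full $\bP'$-mass since under $\bP'$ no jump from $k$ to $l$ can occur where $q_{kl}=0$) is a $\bP'$-martingale whose associated change of measure produces a martingale solution of $\wdh\A$, which must coincide with $\wdh\bP^{(z,k)}$ by Theorem \ref{specialsolution}; inversion then forces $\bP' = \bP^{(z,k)}$. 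As noted by the authors, this parallels the corresponding argument in \cite{XiZ-18}.
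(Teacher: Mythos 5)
Your proposal is correct and follows essentially the same route as the paper: existence via the change of measure $\d\bP/\d\wdh\bP|_{\F_t}=M_t$ (the paper's \eqref{eq-P-general}, extended by Tulcea's theorem), the key step being that $M_\cdot M_\cdot^{(f)}$ is a $\wdh\bP$-martingale, which the paper records as the decomposition \eqref{eq-Mt-Mt-f-mg} and which your bracket computation $\langle M,\wdh M^{(f)}\rangle$ reproduces. For uniqueness the paper likewise defers to the reverse-transformation argument of \cite{XiZ-18}/\cite{Wang-14}, only adding explicitly the Cauchy--Schwarz estimate $\P^{(z,k)}\{\tau_n\le t\}\le(\wdh\P^{(z,k)}\{\tau_n\le t\})^{1/2}(\wdh\E[M_t^2])^{1/2}\to 0$ needed to upgrade agreement on each $\F_{\tau_n}$ to agreement on $\F$; your inversion argument covers the same ground.
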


\para{Proof.} %   The proof is  similar to that  of Theorem 3.6 of \cite{XiZ-18} therefore we shall only give a sketch here.
 First for each $t\ge 0$ and each $A\in \F_{t}$, define
\begin{equation}\label{eq-P-general}
\bP_{t}^{(z,k)}(A)=\int_{A}M_{t}(Z(\cdot),\La(\cdot))\,\d\wdh{\bP}^{(z,k)}.
\end{equation} Thanks to Lemma \ref{Mtmartingale}, $\{\bP_{t}^{(z,k)}\}_{t\ge 0}$ is a consistent  family of probability measures. Thus by Tulcea's extension theorem (see, e.g., Theorem
1.3.5 of  \cite{Stroock-V}), there exists a unique probability measure
$\P^{(z,k)}$ on $(\Omega,\F)$ which coincides with
$\P_{t}^{(z,k)}$ on $\F_{t}$ for all $t\ge 0$.

Similar calculations as those in the proof of  Theorem 3.6 of \cite{XiZ-18} lead to
\begin{equation}\label{eq-Mt-Mt-f-mg}\begin{aligned} \disp
M_{t}M_{t}^{(f)} & =\displaystyle\int_{0}^{t} M_{s-}^{(f)}{\d}  M_{s}+\int_{0}^{t} M_{s-} {\d}{\wdh M}_{s}^{(f)} \\
                          &  \quad + \int_{[0,t] \times \ss}M_{s-}   \left( \frac{q_{\La(s-)l} (Z(s))}{\wdh q_{\La(s-)l}} -1\right)  [f(Z(s), l) - f(Z(s), \La(s-))] \wdt n({\d} s, {\d} l),
\end{aligned}\end{equation}  where for any $f \in C^{\infty}_{c}(\R ^{2d}\times \ss)$,  $M_{\cdot}^{(f)}$ and ${\wdh M}_{\cdot}^{(f)}$ are defined in \eqref{martingale1} and \eqref{eq-Mt-hat}, respectively, and $ M_{\cdot}$ is the exponential martingale defined in  \eqref{eq-Mt-sde}.  Lemma \ref{lem-n-mg-measure} indicates that $\wdt n$ is a martingale measure under $\wdh \P^{(z,k)}$. Moreover,  Theorem \ref{specialsolution}  indicates that $\wdh M_{\cdot}^{(f)}$  is a $\wdh \P^{(z,k)}$-martingale. Also recall that $ M_{\cdot}$ of \eqref{eq-Mt-sde} is a  martingale under   $\wdh \bP^{(z,k)}$ by  Lemma \ref{Mtmartingale}. Therefore it follows that $M_{\cdot} M_{\cdot}^{(f)}$ is a $\wdh\P^{(z,k)}$-martingale.
Then for any $0\le s < t$ and $A\in \F_{s}$,  we have $$\int_{A}M_{t}^{(f)}{\d}\bP^{(z,k)}=\int_{A}M_{t}M_{t}^{(f)}
{\d}\wdh{\bP}^{(z,k)}
=\int_{A}M_{s}M_{s}^{(f)}{\d}\wdh{\bP}^{(z,k)}=\int_{A}M_{s}^{(f)}
{\d}\bP^{(z,k)},$$
 where the second equality follows from the martingale property of $\bigl(M_{t}M_{t}^{(f)}, \F_{t}, \wdh{\bP}^{(z,k)}\bigr)$, while the first and the third equalities hold true since $\bP^{(z,k)}$  coincides with the probability measure $\bP_{t}^{(z,k)}$ given in (\ref{eq-P-general}).
This shows that $\bP^{(z,k)}$ is a martingale solution for the operator $\A$ starting from $(z,k)$.

For the proof of uniqueness, we can use the same arguments  as those in the proofs of Theorem 3.6 of \cite{XiZ-18} or Theorem 1.1 of \cite{Wang-14} to show that   any martingale solution $\wdt \P$ for the operator $\A$ starting from $(z,k)$ must agree with $\bP^{(z,k)}$ on $\F_{\tau_{n}}, n \in \mathbb N$. Consequently we can define a family of probability measures $\P_{n}$ on $(\Omega, \F)$ via $\P_{n}(A) : = \P^{(z,k)}(A)$ for $A\in \F_{\tau_{n}}$. Apparently we have $\P_{n+1} = \P_{n}$ on $\F_{\tau_{n}}$. Then in view of the Tulcea Extension Theorem (ref. Theorem 1.3.5 of \cite{Stroock-V}), the desired uniqueness will follow if we can show that for any $t\ge 0$ we have $ \P_{n}\{\tau_{n} \le t \}= \P^{(z,k)}\{\tau_{n} \le t \} \to 0$ as $n \to \infty$.
% To this end, using Assumption \ref{Assumption-q-ratio-bdd} and the stochastic differential equation \eqref{eq-Mt-sde}, we derive
% \begin{align*}
%   \E^{\wdh \P^{(z,k)}} [M_{t}^{2}]  & \le 2 + 2 \E^{\wdh \P^{(z,k)}}\biggl[ \int_{0}^{t}\int_{\ss} (\kappa +1)^{2} M_{s-}^{2} \nu(\La(s-),\d l)\d s \biggr] \\ &= 2 +  (\kappa +1)^{2}  \int_{0}^{t} \E^{\wdh \P^{(z,k)}}[M_{s}^{2}] \d s,  \end{align*} where we used the fact that $\nu(k, \ss) =\frac12$ for any $k\in \ss$ to derive the  equality. Then Gronwall's inequality implies
Recall that we have shown in Lemma \ref{Mtmartingale} that $ \E^{\wdh \P^{(z,k)}} [M_{t}^{2}]   \le 2 e^{(\kappa+1)^{2}t}< \infty$. Then for any $t \ge 0$
  \begin{align*}
  \P^{(z,k)}\{\tau_{n} \le t \}  & =   \P_{t}^{(z,k)}\{\tau_{n} \le t \}     = \int_{\Omega} \one_{\{\tau_{n} \le t \}} M_{t}\, \d \wdh \P^{(z,k)}  \\
    &   \le \Big( \wdh \P^{(z,k)} \{\tau_{n} \le t \} \Big)^{1/2} \Big(  \E^{\wdh \P^{(z,k)}} [M_{t}^{2}] \Big)^{1/2} \to 0,
\end{align*} as $n \to \infty$, where we used \eqref{eq-tau-n-to-infty} to obtain the convergence in the last step. This completes the proof.\qed
%
%
%From \cite{Wang-14}, for any martingale solution
%$\bP^{(z,k)}$ to the operator $\A$, we have
%$$\bP^{(z,k)}\bigl(\La(\tau_{1})\in
%\ss\setminus\{k\}|\F_{\tau_{1}-}\bigr)=-\sum_{l\in
%\ss\setminus\{k\}}
%\frac{q_{kl}}{q_{kk}}\bigl(Z(\tau_{1}-)\bigr)=1.$$
%\fubao{We now need the countably infinite states' result.
%So we need to reconsider the above equation.}
%\chao{I do not know this.}
%Then the uniqueness can be established using a similar  argument
%as that in the proof of Theorem \ref{specialsolution}.

\begin{Remark} \label{rem-strong-Markov}{\rm
Thanks to Theorem \ref{thm-general}, the martingale problem for the operator $\A$ defined in (\ref{A}) and any initial starting point $(z,k) \in \R ^{2d}\times \ss$ is well-posed. Thus the process $(Z,\La)$ is strong Markov.}
\end{Remark}

\section{Strong Feller property}\label{SFeller}

%We now (24 July, 2017) try to use another method to directly prove
%the strong Feller property for Markov process $(X,Y,\La)$.

We proved in Theorem \ref{thm-general} that  the martingale problem for the operator $\A$ defined in (\ref{A}) is well-posed under Assumption \ref{sgcvq}. % and \ref{Assumption-q-ratio-bdd}.
Consequently for any $(x,y,k)$, there exists a unique probability measure $\bP$ on $\Omega=C([0,\infty),\R ^{d})\times D([0,\infty),\ss)$ under which the coordinate process $(X(t),Y(t),\La(t))$ satisfies $\bP\{(X(0),Y(0),\La(0)) =(x,y,k)\} =1$ and that for any $f\in C_{c}^{\infty}(\R ^{2d}\times \ss)$, the process  $M_{t}^{f}$ defined in (\ref{martingale1}) is an $\{\F_{t}\}$-martingale.
In this section, we will prove that in the  probability space $(\Omega, \F, \{\F_{t}\}, \bP)$,
the process $(Z,\La)=(X,Y,\La)$ possesses the strong Feller property.
%In order to prove the strong Feller property for $(Z,\La)$, we further make the following assumption.

%\begin{Assumption} \label{finite-range}
%There exists a positive integer $\kappa\ge {1}/{2H}$ such that $q_{kl}(z)=0$ for all $z\in \R ^{2d}$ and $k,l \in \ss$ with $|k-l|\ge \kappa+1$.
%\end{Assumption}

Recall that for each $k \in \ss$, Assumption \ref{sgcvq} guarantees that the operator $\LL_{k}$ of (\ref{L})  uniquely determines  a process $Z^{(k)}$. Next for each $(z,k) \in \R ^{2d}\times\ss$, we kill the
  process $Z^{(k)}$ at rate $(-q_{kk})$:
\begin{equation}\label{kp1}\begin{aligned}
\E_{k}[f(\wdt{Z}^{(k)(z)}(t))]&  =\disp
\E_{k}\biggl[f(Z^{(k)(z)}(t))\exp
\biggl\{\int_{0}^{t}q_{kk}(Z^{(k)(z)}(s)){\d}s\biggr\}\biggr]  \\ & =\disp
 \E^{(z,k)}\big[ f(Z^{(k)}(t)); t<\tau\big]= \E^{(z,k)}\big[ f(Z^{(k)}(t))\one_{ \{t<\tau\}}\big],
\end{aligned}\end{equation} to get a subprocess $\wdt{Z}^{(k)}$, where $\tau:=\inf\{t\ge 0: \La(t) \not=\La(0)\}$. Equivalently,
$\wdt{Z}^{(k)}$ can be defined as $\wdt{Z}^{(k)}(t)=Z^{(k)}(t)$ if $t<\tau$
and $\wdt{Z}^{(k)}(t)=\partial$ if $t\ge \tau$, where $\partial$ is a
cemetery point added to $\R ^{2d}$. Note that in the above to get the killed process $\wdt{Z}^{(k)}$ from the original process $Z^{(k)}$, the killing rate is just the jumping rate of $\La$ from state $k$. Namely, the killing time is just the first switching time $\tau$. This is easy to see from the definition of killing time and the construction of the process $(Z,\La)$ given in Section~\ref{General}.

%For definiteness, %%%%for each $k \in \ss$,
%we denote the L\'{e}vy type process $\wdt {X}^{(k)}$ generated by the
%${\LL}_{k}$ defined in (\ref{L}) with initial condition
%$\wdt {X}^{(k)}(0)=x$ by $\wdt {X}^{(k)(x)}$. Likewise, we denote the
%killed L\'{e}vy type process $X^{(k)}$ introduced in (\ref{kp1})
%with initial condition $X^{(k)}(0)=x$ by $X^{(k)(x)}$. Moreover,
 To proceed, we denote the transition probabilities   of the process $Z^{(k)}$  by $\{P^{(k)}(t,z,A): t \ge 0, z \in \R ^{2d}, A \in
\B(\R ^{2d})\}$. Likewise,  $\{\wdt{P}^{(k)}(t,z,A): t \ge 0, z \in \R ^{2d}, A \in
\B(\R ^{2d})\}$ denotes the sub-transition probabilities of the killed process
$\wdt{Z}^{(k)}$.

\begin{Lemma} \label{FP5}
For each $k \in \ss$, the killed process $\wdt{Z}^{(k)}$
has strong Feller property. Moreover, for any $t>0$, $(z,k)\in \R ^{d}\times \ss$ and $A\subset \R ^{2d}$ with $A$ having positive Lebesgue measure, $\wdt{P}^{(k)}(t,z,A)>0$.
\end{Lemma}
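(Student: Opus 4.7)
The plan is to combine the Feynman-Kac representation \eqref{kp1} with a Duhamel-type perturbation formula. By Assumption \ref{sgcvq}(iv), $|q_{kk}(z)| = \sum_{l \neq k} q_{kl}(z) \le H$ for all $z \in \R^{2d}$, so the killing factor $\Phi_t := \exp\bigl\{\int_0^t q_{kk}(Z^{(k)(z)}(u))\d u\bigr\}$ lies in $[e^{-Ht}, 1]$. The positivity claim follows immediately: since $A$ has positive Lebesgue measure and $p_k(t,z,\cdot) > 0$ a.e.\ by Lemma \ref{Wu-Prop1.2},
\begin{displaymath}
\wdt P^{(k)}(t,z,A) \ge e^{-Ht}\, P^{(k)}(t,z,A) = e^{-Ht}\int_A p_k(t,z,z')\,\d z' > 0.
\end{displaymath}

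For the strong Feller property, let $f$ be a bounded Borel measurable function on $\R^{2d}$. Starting from the pathwise ODE identity
\begin{displaymath}
\Phi_t = 1 + \int_0^t q_{kk}(Z^{(k)(z)}(s))\exp\biggl\{\int_s^t q_{kk}(Z^{(k)(z)}(u))\d u\biggr\}\d s,
\end{displaymath}
multiplying by $f(Z^{(k)(z)}(t))$, taking expectations, applying Fubini, and using the Markov property of $Z^{(k)}$ at time $s$ yields the Duhamel formula
\begin{equation}\label{eq-Duhamel-plan}
\wdt P^{(k)}_t f(z) = P^{(k)}_t f(z) + \int_0^t P^{(k)}_{t-s}\bigl(q_{kk}\,\wdt P^{(k)}_s f\bigr)(z)\,\d s,
\end{equation}
where $P^{(k)}_t$ and $\wdt P^{(k)}_t$ denote the semigroups of $Z^{(k)}$ and $\wdt Z^{(k)}$, respectively.

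Given \eqref{eq-Duhamel-plan}, continuity of $z \mapsto \wdt P^{(k)}_t f(z)$ is quick. The first term $P^{(k)}_t f$ is continuous in $z$ by the strong Feller property of $P^{(k)}_t$ from Lemma \ref{Wu-Prop1.2}, since $t > 0$. For the integrand, $\psi_s(z') := q_{kk}(z')\, \wdt P^{(k)}_s f(z')$ is bounded Borel with $\|\psi_s\|_\infty \le H\|f\|_\infty$, so for each $s \in [0,t)$ the strong Feller property of $P^{(k)}_{t-s}$ gives continuity of $z \mapsto P^{(k)}_{t-s}\psi_s(z)$. Since the integrand is uniformly bounded by $H\|f\|_\infty$ on $[0,t]$, dominated convergence transfers this continuity to the integral, and hence to $\wdt P^{(k)}_t f$.

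The main technical point is the rigorous derivation of \eqref{eq-Duhamel-plan} for merely bounded Borel $f$ rather than smooth $f$ in the generator domain. This is essential here because the coefficients $c$ and $\nabla V$ in \eqref{XYk} are only continuous and the underlying diffusion is degenerate, so one cannot comfortably invoke generator-domain arguments. The probabilistic derivation sketched above, based on the elementary ODE identity for $\Phi_t$ together with the Markov property of $Z^{(k)}$, bypasses any reliance on the domain of $\LL_k$ and works directly at the level of the semigroups, with the strong Feller input of Lemma \ref{Wu-Prop1.2} doing all the smoothing.
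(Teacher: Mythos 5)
Your proof is correct, and the positivity part coincides with the paper's argument verbatim ($\wdt{P}^{(k)}(t,z,A)\ge e^{-Ht}P^{(k)}(t,z,A)>0$ via Lemma \ref{Wu-Prop1.2} and the bound $|q_{kk}|\le H$ from \eqref{eq-Q-new-cond}). For the strong Feller part, however, you take a genuinely different route. The paper does not expand the killing factor at all: it fixes $t>0$, sets $g_{s}:=\wdt{P}^{(k)}_{t-s}f$ (bounded and measurable by the cited result of Chung--Zhao), notes that $P^{(k)}_{s}g_{s}\in C_{b}(\R^{2d})$ by Lemma \ref{Wu-Prop1.2}, and then shows via one application of the Markov property that $|P^{(k)}_{s}g_{s}(z)-\wdt{P}^{(k)}_{t}f(z)|\le (1-e^{-Hs})\|f\|$ uniformly in $z$, so that $\wdt{P}^{(k)}_{t}f$ is a uniform limit of continuous functions as $s\downarrow 0$. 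You instead derive the first-order Duhamel identity \eqref{eq-Duhamel-plan} from the pathwise ODE for the multiplicative functional, and then argue term by term, using the strong Feller property of $P^{(k)}_{t-s}$ for each $s<t$ together with dominated convergence on $[0,t]$ (the exceptional point $s=t$ being Lebesgue-null). Both arguments rest on exactly the same two inputs --- the strong Feller property of the unkilled semigroup and the uniform bound $|q_{kk}|\le H$ --- and both need the measurability of $\wdt{P}^{(k)}_{s}f$. The paper's version is slightly more economical (no Fubini, no perturbation formula), while your Duhamel identity is more structural: it exhibits $\wdt{P}^{(k)}_{t}f-P^{(k)}_{t}f$ explicitly and is the natural precursor to the full resolvent expansion the paper later develops in Lemma \ref{lem-resolvant-series}. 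The only technicalities worth making explicit in your write-up are the joint measurability in $(s,\omega)$ needed for Fubini and the measurability in $s$ of $s\mapsto P^{(k)}_{t-s}\psi_{s}(z)$, both routine given the boundedness by $H\|f\|_{\infty}$.
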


\para{Proof.} Let $\{P^{(k)}_{t}\}$ and $\{\wdt{P}^{(k)}_{t}\}$ denote the transition semigroups of $Z^{(k)}$ and $\wdt{Z}^{(k)}$, respectively. To prove the strong Feller property $\wdt{Z}^{(k)}$, we need only to prove that for any given bounded measurable function $f$ on $\R ^{2d}$, $\wdt{P}^{(k)}_{t}f(z)$ is continuous with respect to $z$ for all $t>0$. To this end, for fixed $t>0$ and $0<s<t$, set $g_{s}(z):=\wdt{P}^{(k)}_{t-s}f(z)$. Clearly, the function $g_{s}(\cdot)$ is bounded and measurable, see the Corollary to Theorem 1.1 in \cite{ChungZ-95}. By the strong Feller property of $Z^{(k)}$, we then get that $P^{(k)}_{s}g_{s}(z) \in C_{b}(\R ^{2d})$.

To proceed, by the Markov property, we have that
\begin{align}\label{Ptildef}
\nonumber\disp
\wdt{P}^{(k)}_{t}f(z) &  =\E_{k}^{(z)}\biggl[f(Z^{(k)}(t))\exp
\biggl\{\int_{0}^{t}q_{kk}(Z^{(k)}(u)){\d}u\biggr\}\biggr]\\
&  =\E_{k}^{(z)}\biggl[\exp
\biggl\{\int_{0}^{s}q_{kk}(Z^{(k)}(u)){\d}u\biggr\}\\
\nonumber&  \qquad \times\E_{k}^{(Z^{(k)}(s))}\biggl[f(Z^{(k)}(t-s))\exp
\biggl\{\int_{0}^{t-s}q_{kk}(Z^{(k)}(u)){\d}u\biggr\}\biggr]\biggr].
\end{align}
Meanwhile, we also have that \begin{equation}\label{PPtildef}
\begin{aligned} \disp
P_{s}^{(k)}\wdt{P}^{(k)}_{t-s}f(z)&  =\E_{k}^{(z)}\biggl[\wdt{P}^{(k)}_{t-s}f(Z^{(k)}(s))\biggr]\\
&  =\E_{k}^{(z)}\biggl[\E_{k}^{(Z^{(k)}(s))}\biggl[f(Z^{(k)}(t-s))\exp
\biggl\{\int_{0}^{t-s}q_{kk}(Z^{(k)}(u)){\d}u\biggr\}\biggr]\biggr].
 \end{aligned}\end{equation} Recall from Assumption \ref{sgcvq} that $+\infty>H\ge -\inf\{q_{kk}(z): (z,k)\in \R ^{2d}\times \ss\}$ and $q_{kk}(z)\le 0$, and so \begin{equation}\label{e-Hs} 0\le 1-\exp
\biggl\{\int_{0}^{s}q_{kk}(Z^{(k)}(u)){\d}u\biggr\}\le \bigl(1-e^{-Hs}\bigr).\end{equation} Thus, it follows from (\ref{Ptildef}), (\ref{PPtildef}) and (\ref{e-Hs}) that \begin{equation}\label{killedprocess-sfeller}
|P^{(k)}_{s}g_{s}(z)-\wdt{P}^{(k)}_{t}f(z)|\le \bigl(1-e^{-Hs}\bigr)\|f\| \to 0 \,\, \hbox{uniformly as} \,\, s\to 0,\end{equation} where $\|\cdot\|$ denotes the uniform (or supremum) norm. Combining this with the fact that $P^{(k)}_{s}g_{s}(z) \in C_{b}(\R ^{2d})$ implies that $\wdt{P}^{(k)}_{t}f(z) \in C_{b}(\R ^{2d})$, and so the desired strong Feller property follows.

It is easy to see that for any $t>0$, $(z,k)\in \R ^{2d}\times \ss$ and $A\subset \R ^{2d}$ with $A$ having positive Lebesgue measure,
$$\begin{array}{ll} \disp
\wdt{P}^{(k)}(t,z,A)\ad =\E_{k}^{(z)}\biggl[{\one}_{A}(Z^{(k)}(t))\exp
\biggl\{\int_{0}^{t}q_{kk}(Z^{(k)}(u)){\d}u\biggr\}\biggr]  \ge e^{-Ht}P^{(k)}(t,z,A)>0\end{array}$$ by Lemma~\ref{Wu-Prop1.2}. This completes the proof. \qed

\begin{Remark}
 Under the usual assumption %that the diffusion matrix $a(\cdot,k)$ is uniformly elliptic and
that the function $q_{kk}(\cdot)$ is Lipschitz continuous for each $k \in \ss$, a similar result concerning the strong Feller property of the killed process is also established in \cite{XiZ-18}. Here we only require $q_{kk}(\cdot)$ to be bounded and Borel measurable.
%{\red In addition, we can use almost  the same proof as that of Lemma \ref{FP5} to show that if process $Z^{(k)}$ is Feller, then the killed process $\wdt Z^{(k)}$ is Feller.}  \fubao{It seems that the red sentence is not very necessary.}
\end{Remark}

The following lemma was proved in \cite{XiZ-18}:
\begin{Lemma} \label{lem-CP1}
Let $\wdt{\Xi}$ be the subprocess of $\Xi$ killed at the rate $q$ with
lifetime $\zeta$,
\begin{equation}\label{kp2}
\E[f(\wdt{\Xi}^{(z)}({t}))] =\disp \E\bigl[ t<\zeta;f(\Xi^{(z)}(t))\bigr]
=\E\biggl[f(\Xi^{(z)}(t))\exp\biggl\{-\int_{0}^{t}q(\Xi^{(z)}(s)){\d}s
\biggr\}\biggr],
\end{equation}
where $\Xi$ is a right continuous strong Markov process %%%%being
%%%%right continuous with left limits
%%%%with continuous sample path
and $q \ge 0$ on $\R ^{2d}$. Then for any nonnegative function  $\phi$ on $\R ^{2d}$
and constant $\al >0$, we have
\begin{equation}\label{eq:zeta}
\E[e^{-\al \zeta}\phi(\wdt{\Xi}^{(z)}(\zeta-))]=G_{\al}^{\wdt{\Xi}}(q\phi)(z),
\end{equation} where $\{G_{\al}^{\wdt{\Xi}}, \al >0\}$ denotes the resolvent
for the killed process $\wdt{\Xi}$.
\end{Lemma}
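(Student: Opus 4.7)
The plan is to exploit the standard construction of the killed process via an independent mean-one exponential clock. Let $E$ be an exponential random variable with rate $1$, independent of $\Xi^{(z)}$, and define
\begin{equation*}
\zeta := \inf\Bigl\{t \ge 0 : \int_{0}^{t} q(\Xi^{(z)}(s))\d s \ge E\Bigr\}.
\end{equation*}
Then, conditionally on the path $\Xi^{(z)}$, the survival function of $\zeta$ is
\begin{equation*}
\bP(\zeta > t \mid \Xi^{(z)}) = \exp\Bigl\{-\int_{0}^{t} q(\Xi^{(z)}(s))\d s\Bigr\},
\end{equation*}
which is consistent with the definition \eqref{kp2} of the killed process. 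Differentiating in $t$ gives the conditional (sub)density of $\zeta$ as $q(\Xi^{(z)}(t))\exp\{-\int_{0}^{t} q(\Xi^{(z)}(s))\d s\}$, from which all subsequent computations will flow.

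Next I would compute the left-hand side by conditioning on $\Xi^{(z)}$ and integrating out $\zeta$. Since $\wdt{\Xi}^{(z)}(\zeta-) = \Xi^{(z)}(\zeta-) = \Xi^{(z)}(\zeta)$ (the latter by right continuity of $\Xi$, on the event that $\zeta$ is not a jump time, which holds almost surely for an independent exponential clock),
\begin{align*}
\E\bigl[e^{-\al \zeta}\phi(\wdt{\Xi}^{(z)}(\zeta-))\bigr]
 &= \E\biggl[\int_{0}^{\infty} e^{-\al t} \phi(\Xi^{(z)}(t))\, q(\Xi^{(z)}(t)) \exp\Bigl\{-\int_{0}^{t} q(\Xi^{(z)}(s))\d s\Bigr\}\d t\biggr].
\end{align*}
Because the integrand is nonnegative ($\phi\ge 0$, $q\ge 0$), Fubini--Tonelli permits exchanging the expectation with the $t$-integral, yielding
\begin{equation*}
\int_{0}^{\infty} e^{-\al t} \E\biggl[(q\phi)(\Xi^{(z)}(t)) \exp\Bigl\{-\int_{0}^{t} q(\Xi^{(z)}(s))\d s\Bigr\}\biggr]\d t.
\end{equation*}

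Finally, the inner expectation is recognized via the definition \eqref{kp2} applied with the nonnegative function $q\phi$ in place of $f$: it equals $\E[(q\phi)(\wdt{\Xi}^{(z)}(t))]$. Substituting back gives
\begin{equation*}
\E\bigl[e^{-\al \zeta}\phi(\wdt{\Xi}^{(z)}(\zeta-))\bigr] = \int_{0}^{\infty} e^{-\al t} \E\bigl[(q\phi)(\wdt{\Xi}^{(z)}(t))\bigr]\d t = G_{\al}^{\wdt{\Xi}}(q\phi)(z),
\end{equation*}
which is the desired identity. The only mildly delicate point is justifying the conditional density formula for $\zeta$ rigorously; this can be done by first verifying the identity for $\phi$ bounded and continuous (using the independence of $E$ and a change of variables), and then extending to nonnegative Borel $\phi$ via monotone class / monotone convergence. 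The interchange in Fubini is automatic in the nonnegative setting, so no integrability obstruction arises.
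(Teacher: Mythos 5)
Your proof is correct. The paper itself gives no argument for this lemma (it simply cites \cite{XiZ-18}), and your computation --- realizing the killed process via an independent unit exponential clock, reading off the conditional subdensity $q(\Xi^{(z)}(t))\exp\{-\int_{0}^{t}q(\Xi^{(z)}(s))\,\d s\}$ of the lifetime given the path, applying Tonelli, and then recognizing the inner expectation as $\E[(q\phi)(\wdt{\Xi}^{(z)}(t))]$ via \eqref{kp2} --- is precisely the standard argument used in that reference, with the minor points (left limit versus value at $\zeta$, extension of \eqref{kp2} to unbounded nonnegative $\phi$) correctly handled.
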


%\para{Proof.}\chao{Shall we just refer to \cite[Lemma 4.8]{XiZ-18} for the proof? }  By the definition of the resolvent and (\ref{kp2}), we get  \begin{align*} G_{\al}^{\wdt{\Xi}}(q\phi)(z)& =\E \biggl[\int_{0}^{\infty} e^{-\al t}(q\phi)(\wdt{\Xi}^{(x)}(t)){\d} t\biggr]\\ & =\E \biggl[\int_{0}^{\infty} e^{-\al t}(q\phi)(\Xi^{(x)}(t))\exp\biggl\{-\int_{0}^{t}q(\Xi^{(x)}(s)){\d} s\biggr\}{\d}t\biggr], \end{align*} which by page 286 in Sharpe \cite{Sharpe-88} (putting $m_t=\exp\{-\int_{0}^{t}q(\Xi(s)){\d}s\}{\mathbf{1}}_{(t<\zeta)}$ there) equals the left-hand side in (\ref{eq:zeta}). \qed

  For each $k \in \ss$, let $\{\wdt{G}^{(k)}_\al, \al>0\}$
be the resolvent for the generator $\LL_k+q_{kk}$.
Let us also denote by $\{G_{\alpha}, \alpha >0 \}$
the resolvent for the generator $\A$ defined in (\ref{A}).
 % In order to state the following lemma, we now introduce some notations.
  Let
$$\wdt{G}_{\al}=\left(\begin{array}{cccc}
\wdt{G}^{(1)}_{\al} & 0 & 0 & \cdots\\
0 & \wdt{G}^{(2)}_{\al} & 0 & \cdots\\
0 & 0 & \wdt{G}^{(3)}_{\al} & \cdots\\
\vdots & \vdots & \vdots & \ddots
\end{array} \right) \ \hbox{and} \  Q^{0}(z)=Q(z)-\left(\begin{array}{cccc}
q_{11}(z) & 0 & 0 & \cdots\\
0 & q_{22}(z) & 0 & \cdots\\
0 & 0 & q_{33}(z) & \cdots\\
\vdots & \vdots & \vdots & \ddots
\end{array} \right).$$
% Note that under Assumption~\ref{finite-range}, there are at most $2\kappa$ nonzero elements on each row of matrix $Q^{0}(x)$. More precisely, for each $k\in \ss$, $q_{kl}(\cdot)\equiv 0$ for all $l\notin \ss_{k}$, where $\ss_{k}:=\{l\in\ss: 0<|l-k|\le 2\kappa\}$.

  Next we establish an important resolvent identity; it extends  Lemma 4.9 of \cite{XiZ-18} from a finite to a countable infinite state space for the discrete component $\La$.

\begin{Lemma} \label{lem-resolvant-series}
Suppose that Assumption~\ref{sgcvq} holds. %and \ref{Assumption-q-ratio-bdd} hold.
There exists a constant
${\al}_1>0$ such that for any ${\al}\ge {\al}_1$ and any
$f(\cdot,\cdot)\in \B_{b}(\R ^{2d}\times \ss)$,
\begin{equation}\label{(FP17)}
G_{\al}f=\wdt{G}_{\al}
f+\sum_{m=1}^{\infty}\wdt{G}_{\al}\bigl(Q^{0}\wdt{G}_{\al}\bigr)^{m}f.
\end{equation}\end{Lemma}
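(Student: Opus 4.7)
The plan is to first establish the one-step resolvent identity $G_\al f = \wdt G_\al f + \wdt G_\al Q^0 G_\al f$ and then iterate. The rationale is that $\A = (\LL + \mathrm{diag}(q_{kk})) + Q^0$, where the first summand is the generator of the block-diagonal killed process (with resolvent $\wdt G_\al$) while $Q^0$ is a bounded off-diagonal jump operator; the claimed formula \eqref{(FP17)} is then a Neumann-series expansion of the perturbed resolvent.

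To derive the one-step identity probabilistically, I would split the process at the first switching time $\tau := \inf\{t > 0 : \La(t) \ne \La(0)\}$. Writing $G_\al f(z,k) = \E^{(z,k)}\bigl[\int_0^\infty e^{-\al t} f(Z(t),\La(t))\,\d t\bigr]$ and noting that on $[0,\tau)$ the component $\La \equiv k$ and $Z$ coincides with $Z^{(k)}$, the killed-process definition \eqref{kp1} yields
\[
\E^{(z,k)}\Bigl[\int_0^\tau e^{-\al t} f(Z(t),k)\,\d t\Bigr] = \wdt G_\al f(z,k).
\]
For the tail, the strong Markov property at $\tau$ (Remark \ref{rem-strong-Markov}) gives $\E^{(z,k)}\!\bigl[e^{-\al\tau} G_\al f(Z(\tau),\La(\tau))\bigr]$; conditioning on the pre-switching history and using $\bP(\La(\tau) = l \mid Z(\tau-), \La(\tau-) = k) = q_{kl}(Z(\tau-))/(-q_{kk}(Z(\tau-)))$ for $l \ne k$, this rewrites as $\E^{(z,k)}[e^{-\al \tau}\phi(Z(\tau-))]$ with $\phi(z) := \sum_{l \ne k} \tfrac{q_{kl}(z)}{-q_{kk}(z)} G_\al f(z,l)$. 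Applying Lemma \ref{lem-CP1} with $\Xi = Z^{(k)}$ and $q = -q_{kk}$ then converts this into $\wdt G_\al^{(k)}\bigl((-q_{kk})\phi\bigr)(z) = \wdt G_\al^{(k)}\bigl(\sum_{l \ne k} q_{kl}\, G_\al f(\cdot,l)\bigr)(z) = (\wdt G_\al Q^0 G_\al f)(z,k)$, as desired.

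With the one-step identity in hand, $n$-fold iteration produces
\[
G_\al f = \wdt G_\al f + \sum_{m=1}^{n} \wdt G_\al (Q^0 \wdt G_\al)^m f + (\wdt G_\al Q^0)^{n+1} G_\al f.
\]
The norm bound $\|\wdt G_\al\|_{\mathrm{op}} \le 1/\al$ on $\B_b(\R^{2d}\times \ss)$ follows from the subMarkov property of the killed semigroup, and $\|Q^0 f\|_\infty \le H\|f\|_\infty$ follows from \eqref{eq-Q-new-cond} in Assumption \ref{sgcvq}(iv). Hence $\|\wdt G_\al Q^0\|_{\mathrm{op}} \le H/\al$, and choosing $\al_1 > H$ ensures that for $\al \ge \al_1$ the remainder $(\wdt G_\al Q^0)^{n+1} G_\al f$ is bounded by $(H/\al)^{n+1}\|f\|_\infty / \al$, which vanishes as $n \to \infty$. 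This yields both convergence of the series in \eqref{(FP17)} and the identity.

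The main obstacle I anticipate is making the probabilistic decomposition at $\tau$ rigorous when the discrete state space $\ss$ is countably infinite: one must interchange the sum over $l$ with the expectation and properly identify the pre-switching conditional law of $\La(\tau)$. The uniform bound $\sum_{l \ne k} q_{kl}(z) = -q_{kk}(z) \le H$ from Assumption \ref{sgcvq}(iv) legitimizes both steps via dominated convergence, but the bookkeeping requires more care than the finite state-space argument in \cite{XiZ-18}. The remaining analysis is pure bounded-operator algebra on $\B_b(\R^{2d}\times \ss)$.
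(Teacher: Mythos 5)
Your proposal is correct and follows essentially the same route as the paper: the paper's expansion \eqref{(FP19)} (imported from Lemma 4.9 of \cite{XiZ-18}) is exactly the $m$-fold iteration of your one-step identity obtained by splitting at the first switching time and applying Lemma \ref{lem-CP1}, and the paper's bounds $\|\psi_i^{(k)}\|\le (H/\al)^i\|f\|/\al$ and $\|R_m^{(k)}\|\le (H/\al)^{m+1}\|f\|/\al$ via $q_{kl}(z)\le \wdh q_{kl}$ and \eqref{eq-Q-new-cond} are precisely your operator-norm estimates $\|\wdt G_\al Q^0\|\le H/\al$, with the same threshold $\al_1 = H+1$. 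The only difference is presentational: you package the iteration as Neumann-series algebra on $\B_b(\R^{2d}\times\ss)$, whereas the paper writes out the nested remainder explicitly.
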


\para{Proof.}  Using the same calculations as those in the proof of Lemma 4.9 of \cite{XiZ-18}, we can show that for any nonnegative function
  $f\in \B_{b}(\R ^{2d} \times \ss)$
% Applying the strong Markov property at the first switching time $\tau$ and recalling the construction of $(Z,\La)$, we obtain  \begin{align*} \disp G_\al f(z,k) & =\E_{z,k} \biggl[\int_0^\infty e^{-{\al} t} f(Z(t),\La(t)){\d}t \biggr]\\ & =\E_{z,k} \biggl[\int_0^\tau e^{-{\al} t} f(Z(t), k){\d}t \biggr]+\E_{z,k} \biggl[\int_\tau^\infty e^{-{\al} t} f(Z(t), \La(t)){\d}t \biggr]\\ & =\wdt{G}^{(k)}_{\al} f(z,k)+\E_{z,k} \biggl[ e^{-{\al} \tau} G_{\al} f(Z(\tau), \La(\tau))\biggr]\\ & =\wdt{G}^{(k)}_{\al} f(z,k)+\sum_{l \in \ss\setminus \{k\}}\E_{z,k}\biggl[ e^{-{\al} \tau} %%%%\frac{q_{kj}(\wt X^{(k)}_{\tau-})}{(-q_{kk}) (\wt %%%%X^{(k)}_{\tau-})} \biggl(-\,\frac{q_{kl}}{q_{kk}}\biggr)(Z({\tau-})) G_{\al} f(Z({\tau-}),l)\biggr]\\ &  =\wdt{G}^{(k)}_{\al} f(z,k)+\sum_{l \in \ss\setminus \{k\}} \wdt{G}_{\al}^{(k)} (q_{kl} G_{\al} f(\cdot, l))(z), \end{align*} where  the last equality follows from (\ref{eq:zeta})  in Lemma~\ref{lem-CP1}. Hence we have \begin{equation}\label{(FP18)} G_{\al}f(z,k)=\wdt{G}^{(k)}_{\al}f(\cdot,k)(z) +\wdt{G}^{(k)}_{\al}\Biggl(\sum_{l \in \ss \setminus \{k\}}q_{kl}G_{\al}f(\cdot,l)\Biggr)(z).\end{equation} Repeating the above argument, we see that the second term on the right hand side of (\ref{(FP18)}) equals $$\wdt{G}^{(k)}_{\al}\Biggl(\sum_{l \in \ss \setminus \{k\}}q_{kl}\wdt{G}^{(l)}_{\al}f(\cdot,l)\Biggr)(z)+ \wdt{G}^{(k)}_{\al}\Biggl(\sum_{l \in \ss \setminus \{k\}}q_{kl}\wdt{G}^{(l)}_{\al}\Biggl(\sum_{l_{1} \in \ss \setminus \{l\}}q_{ll_{1}}G_{\al}f(\cdot,l_{1})\Biggr)\Biggr)(z).$$ Hence, we further obtain that for any fixed $k \in \ss$
 and any integer $m\ge
1$, we have
\begin{equation}\label{(FP19)}
G_{\al}f(z,k)=\E_{z,k} \biggl[\int_0^\infty e^{-{\al} t} f(Z(t),\La(t)){\d}t \biggr] =\sum_{i=0}^{m} \psi^{(k)}_{i}(z)+R^{(k)}_{m}(z),
\end{equation} where
 \begin{align*}
&   \psi^{(k)}_{0}=\wdt{G}^{(k)}_{\al}f(\cdot,k), \\
 &  \psi^{(k)}_{1}=\wdt{G}^{(k)}_{\al}\Biggl(\sum_{l \in \ss \setminus
\{k\}}q_{kl}\wdt{G}^{(l)}_{\al}f(\cdot,l)\Biggr)=\wdt{G}^{(k)}_{\al}\Biggl(\sum_{l
\in \ss \setminus \{k\}}q_{kl}\psi^{(l)}_{0}\Biggr), \\
&  \psi^{(k)}_{i}=\wdt{G}^{(k)}_{\al}\Biggl(\sum_{l
\in \ss \setminus \{k\}}q_{kl}\psi^{(l)}_{i-1}\Biggr)\quad \hbox{for} \quad i\ge 1, \intertext{and}
& R_{m}^{(k)} = \wdt{G}^{(k)}_{\al}\Biggl(\sum_{l_{1} \in \ss\setminus\{k \}} q_{k,l_{1}} \wdt{G}^{(l_{1})}_{\al}
   \Biggl(\sum_{l_{2}\in \ss \setminus \{l_{1}\}}q_{l_{1}, l_{2}}\wdt{G}^{(l_{2})}_{\al}
    \Biggl( \dots \Biggl( \sum_{l_{m-1} \in \ss\setminus\{l_{m-2}\}} q_{l_{m-2}, l_{m-1}}  \\
    & \qquad \qquad\qquad\qquad\qquad\qquad \wdt{G}^{(l_{m-1})}_{\al} \Biggl( \sum_{l_{m} \in \ss\setminus\{l_{m-1} \}} q_{l_{m-1}, l_{m}}G_{\al} f (\cdot, l_{m}) \Biggr)\Biggr)\Biggr)\Biggr) \Biggr).
\end{align*}
% By Assumption \ref{sgcvq} we know that $+\infty>H\ge\max \{\|q_{kk}\|: k \in \ss\}\ge \max \{\|q_{kl}\|: k\ne l \in \ss\}$. \chao{Here are some new calculations using {\em Assumption \ref{Assumption-q-ratio-bdd} only}:
We have\begin{equation}\label{eq-psi0}\|\psi_{0}^{(k)}\|=\biggl\|\E_{\cdot,k} \biggl[\int_0^\tau e^{-{\al} t} f(Z(t), k){\d}t \biggr]\biggr\|\le \frac{\|f\|}{\al}.\end{equation} Note that the same calculation reveals that  \eqref{eq-psi0} in fact holds for all $l \in \ss$: $\|\psi_{0}^{(l)}\|\le \frac{\|f\|}{\al}.$ Thanks to  the definition of $\wdh q_{kl}$ in \eqref{eq-Q-hat}, we have $q_{kl}(z) \le  \wdh q_{kl}$ for all $l \neq k$  and $z \in \R^{2d}$.  % \begin{displaymath} q_{kl}(z) \le  \wdh q_{kl}. %  \le  \frac{\kappa}{3^{l-1}}.\end{displaymath}
  Consequently, we can compute \begin{equation}\label{eq-psi1}\|\psi^{(k)}_{1}\| \le \sum_{l \in \ss \setminus
\{k\}}\|\wdt{G}^{(k)}_{\al}(q_{kl}\psi^{(l)}_{0})\|\le
\sum_{l \in \ss \setminus\{k\}} \wdh q_{kl}  \frac{ \|\psi^{(l)}_{0}\| }{\alpha} \le \sum_{l \in \ss \setminus\{k\}} \wdh q_{kl}\cdot\frac{  \|f\|  }{\alpha^{2}} \le   \frac{H }{ \alpha}\cdot \frac{\|f\|}{\al},\end{equation} where the last inequality follows from \eqref{eq-Q-new-cond}. As before, we observe that \eqref{eq-psi1} actually holds for all $l \in \ss$. In the same manner, we can use induction to show that \begin{equation}\label{eq-psi-i}
\|\psi^{(k)}_{i}\|\le  \biggl(\frac{H}{\alpha}\biggr)^{i} \cdot \frac{\|f\|}{\alpha} \ \  \text{ for   }i \ge 2, \quad \text{ and }\quad\ \
 % \end{equation} and  \begin{equation}\label{eq-R_m^k-estimate}
\|R^{(k)}_{m}\|\le     \biggl(\frac{H}{\alpha}\biggr)^{m+1}\cdot\frac{\|f\|}{\al}.\end{equation} Now let $\al_{1}: = H+1$ and $\al \ge \al_{1}$. Then we have for each $k \in \ss$,
$G_{\al}f(\cdot,k)=\sum_{i=0}^{\infty} \psi^{(k)}_{i}$, which
clearly implies (\ref{(FP17)}).  The lemma is proved. \qed
%From Assumption \ref{finite-range} we also know that $2\kappa H\ge 1$. Therefore, we get that \begin{equation}\label{psi0}\|\psi_{0}^{(k)}\|=\biggl\|\E_{\cdot,k} \biggl[\int_0^\tau e^{-{\al} t} f(Z(t), k){\d}t \biggr]\biggr\|\le \frac{\|f\|}{\al} \le \frac{2\kappa H}{\al}\|f\|=\beta \|f\|,\end{equation} where $\beta :={2\kappa H}/{\al}$. Furthermore, we can derive from Assumption \ref{finite-range} that \begin{equation}\label{psi1}\|\psi^{(k)}_{1}\| \le \sum_{l \in \ss \setminus \{k\}}\|\wdt{G}^{(k)}_{\al}(q_{kl}\psi^{(l)}_{0})\|\le \frac{H}{\al} \sum_{l \in \ss_{k}}\|\psi^{(l)}_{0}\|\le \beta^{2}\|f\|\le \frac{1}{2^{2}}\|f\|\end{equation} when $\al \ge \al_{1}:=4\kappa H$. A similar argument yields that for $i \ge 2$, \begin{equation}\label{(FP20)}\|\psi^{(k)}_{i}\|\le \frac{1}{2^{i+1}} \|f\|\end{equation} and \begin{equation}\label{(FP21)}\|R^{(k)}_{m}(\cdot)\|\le \frac{1}{2^{m+2}} \|f\|\end{equation} when $\al \ge \al_{1}$.Combining (\ref{(FP20)}) and (\ref{(FP21)}) with (\ref{(FP19)}) and letting $m \uparrow \infty$, we conclude that for each $k \in \ss$,$G_{\al}f(\cdot,k)=\sum_{i=0}^{\infty} \psi^{(k)}_{i}$, whichclearly implies (\ref{(FP17)}). The lemma is proved. \qed

Lemma \ref{lem-resolvant-series} establishes an explicit  relationship of the resolvents for $(Z,\La)$ and the killed processes $\wdt{Z}^{(k)}$, $k\in \ss$. This, together with the strong Feller property for the killed processes $\wdt{Z}^{(k)}$, $k\in \ss$ (Lemma \ref{FP5}), enables us to  derive the strong Feller property for $(Z,\La)$ in the following theorem.
%As usual, we denote the transition probability family of Markov process $(Z,\La)$ by $\{P(t,(z,k),A): t\ge 0,(z,k)\in \R ^{2d} \times \ss, A\in {\cal B}(\R ^{2d} \times \ss)\}$.

\begin{Theorem} \label{thm-sFeller} Suppose that Assumption~\ref{sgcvq} holds. Then the process $(Z,\La)=(X,Y,\La)$ has the strong Feller property.
\end{Theorem}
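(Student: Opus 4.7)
The plan is to combine the series identity of Lemma~\ref{lem-resolvant-series} with the strong Feller property of the killed processes from Lemma~\ref{FP5}, proceeding in two phases: first establish the resolvent strong Feller property of $G_\al$ for $\al\ge\al_1$, then transfer it to the semigroup $P_t$.

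For the resolvent phase, fix $\al\ge\al_1$ and $f\in\B_b(\R^{2d}\times\ss)$. Since $\wdt{P}^{(k)}_t$ is strong Feller by Lemma~\ref{FP5}, dominated convergence applied to $\wdt{G}^{(k)}_\al g(z)=\int_0^\infty e^{-\al t}\wdt{P}^{(k)}_t g(z)\,\d t$ shows that $\wdt{G}^{(k)}_\al$ maps $\B_b(\R^{2d})$ into $C_b(\R^{2d})$. Since each $q_{kl}(\cdot)$ is bounded measurable, the innermost operator in each iterate $\wdt{G}_\al(Q^0\wdt{G}_\al)^m f$ is some $\wdt{G}^{(k)}_\al$ acting on a bounded measurable integrand, so every term in \eqref{(FP17)} is continuous in $z$ for each fixed $k$. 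The geometric estimates $\|\psi_m^{(k)}\|\le (H/\al)^m\|f\|/\al$ already obtained in the proof of Lemma~\ref{lem-resolvant-series} give uniform absolute convergence of the series, whence $G_\al f(\cdot,k)\in C_b(\R^{2d})$.

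To transfer this to the semigroup, I would Laplace-invert the resolvent identity --- equivalently, iterate the strong Markov property at the successive switching times $\tau_n$ together with the Feynman--Kac representation $\wdh\P^{(z,k)}\{\tau>s\}=\E\bigl[\exp(\int_0^s q_{kk}(Z^{(k)}(u))\,\d u)\bigr]$ --- to obtain a Dyson expansion
$$P_t f(z,k)=\wdt{P}^{(k)}_t f(\cdot,k)(z)+\sum_{n=1}^\infty S_n(t)f(z,k),$$
where each $S_n(t)f$ is an $n$-fold iterated time integral of products of killed semigroups $\wdt{P}^{(k_j)}_{s_j}$ alternated with multiplication by off-diagonal entries of $Q$. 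An induction on $n$, using the strong Feller property of $\wdt{P}^{(k)}_s$ and dominated convergence, shows that each $S_n(t)f(\cdot,k)$ is continuous in $z$; the uniform bound $\|S_n(t)f\|\le(Ht)^n\|f\|/n!$, which follows from $\sup_k|q_{kk}|\le H$ (Assumption~\ref{sgcvq}(iv)), then gives absolute uniform convergence of the series, so $P_t f(\cdot,k)\in C_b(\R^{2d})$ for every $k\in\ss$, which is the claimed strong Feller property.

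The main technical obstacle will be the rigorous justification of the Dyson expansion, in particular the strong Markov step at each jump time together with the Fubini interchanges required to organize the iterated integrals. A secondary subtlety, forced by the fact that $q_{kl}(\cdot)$ is only measurable rather than continuous, is that the inductive step must alternate between measurable intermediate objects, produced by the action of $Q^0$, and continuous ones, restored by the subsequent strong Feller operator $\wdt{P}^{(k)}_s$, rather than staying within a fixed function class throughout.
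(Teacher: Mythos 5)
Your proposal is correct and follows essentially the same route as the paper: the paper's proof rests on exactly the Dyson-type expansion \eqref{(FP22)} of $P(t,(z,k),\cdot)$ into iterated time integrals of the killed kernels $\wdt{P}^{(k)}$ alternated with the off-diagonal rates $q_{kl}$, combined with the strong Feller property of the killed processes from Lemma \ref{FP5} and the summability coming from Assumption \ref{sgcvq}(iv). The only cosmetic difference lies in how the series is summed: you show each term maps bounded measurable data to continuous functions and conclude by the uniform bound $(Ht)^{n}/n!$, whereas the paper shows each term is lower semicontinuous on open sets and invokes Proposition 6.1.1 of Meyn--Tweedie.
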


\para{Proof.} The proof is similar  to that of Theorem 5.4 in \cite{XiZ-18} and for brevity, we shall only give a sketch here. Denote the transition probability family of Markov process $(Z,\La)$ by $\{P(t,(z,k),A): t\ge 0,(z,k)\in \R ^{2d} \times \ss, A\in {\cal B}(\R ^{2d} \times \ss)\}$. Then it follows from Lemma \ref{lem-resolvant-series} that \begin{align}\label{(FP22)} \nonumber &  P(t,(z,k),A\times \{l\})\\ & \nonumber \ =\delta_{kl} \wdt{P}^{(k)}(t,z,A)
   +\sum_{m=1}^{+\infty} \  \idotsint\limits_{0<t_{1}<\cdots
<t_{m}<t}
  \sum_{{l_{1}\in \ss\setminus\{l_{0}\}, l_{2}\in \ss\setminus\{l_{1}\}, \cdots, l_{m}\in
\ss\setminus\{l_{m-1}\},}\atop{l_{0}=k, \, l_{m}=l}}\int_{\R ^{2d}} \cdots
\int_{\R ^{2d}}  \wdt{P}^{(l_{0})}(t_{1},z,{\d}z_{1})\\
&  \quad \ \times q_{l_{0}l_{1}}(z_{1}) \wdt{P}^{(l_{1})}(t_{2}-t_{1},z_{1},{\d}z_{2})\cdots
q_{l_{m-1}l_{m}}(z_{m})\wdt{P}^{(l_{m})}(t-t_{m},z_{m},A) {\d}t_{1} {\d}t_{2}
\cdots {\d}t_{m},\end{align} where $\delta_{kl}$ is the Kronecker
symbol in $k$, $l$, which equals $1$ if $k=l$ and  $0$ if $k\neq l$.
By Lemma~\ref{FP5}, we know that for every $k \in\ss$, $\wdt{Z}^{(k)}$ has the strong Feller property. Therefore, in view of
Proposition 6.1.1 in \cite{MeynT-93} and Assumption~\ref{sgcvq}, we
derive that $\wdt{P}^{(k)}(t,z,A)$ and every term in the series on
the right-hand side of \eqref{(FP22)}
%$\sum_{m=1}^{+\infty}$
are lower semicontinuous with respect to $z$
whenever $A$ is an open set in $\B(\R ^{2d})$. Note that $\ss$ is a countably infinite set and has
discrete metric. Therefore it follows that
the left-hand side of  \eqref{(FP22)} is lower semicontinuous with
respect to $(z,k)$ for every $l \in \ss$ whenever $A$ is an open set
in $\B(\R ^{2d})$.  Consequently, $(Z,\La)$ has the strong Feller property
(see Proposition 6.1.1 in \cite{MeynT-93} again). The theorem is
proved. \qed

\begin{Remark} \label{q-only-measurable} In order to prove the strong Feller property for the process $(Z,\La)$, the Lipschitz continuity of the function $q_{kl}(x,y)$  with respect to $(x,y)$ is a standard assumption in the literature; see for example, \cite{Shao-15, Xi-09, ZY-09b, XiYin-15, XiZ-17} and related references therein. By contrast, Theorem \ref{thm-sFeller} only assumes that $q_{kl}(x,y)$ is bounded and measurable for each pair $k,l \in \ss$.
\end{Remark}

 \begin{Remark} \label{existence-of-density}
 We can prove that $(X,Y,\La)$ has a transition probability density family under  Assumption~\ref{sgcvq}. % and \ref{Assumption-q-ratio-bdd}.
 To see this, let $t > 0, z \in \R^{2d}$, and $k\in \ss$. Since $q_{kk} (z) \le 0$, we have from Lemma \ref{Wu-Prop1.2} that
\begin{align*} \disp
\wdt{P}^{(k)}(t,z,A)&  =\E_{k}^{(z)}\biggl[{\one}_{A}(Z^{(k)}(t))\exp
\biggl\{\int_{0}^{t}q_{kk}(Z^{(k)}(u)){\d}u\biggr\}\biggr]\\ &  \le P^{(k)}(t,z,A) = \int_{A} p^{(k)}(t,z,z')\d z'.\end{align*}
 It follows that $ \wdt{P}^{(k)}(t,z,\cdot)$ is absolutely continuous with respect to the Lebegue measure $\d z'$ on $\R^{d}$. Denote the density function by $\wdt p^{(k)}(t,z,z')$.   Consequently by virtue of \eqref{(FP22)}, we can write
 \begin{align*}  &  P(t,(z,k),A\times \{l\}) =\int_{A}\delta_{kl} \, \wdt{p}^{(k)}(t,z,z')\d z' \\ &  \quad
   +\sum_{m=1}^{+\infty} \  \idotsint\limits_{0<t_{1}<\cdots
<t_{m}<t}
  \sum_{{l_{1}\in \ss\setminus\{l_{0}\}, l_{2}\in \ss\setminus\{l_{1}\}, \cdots, l_{m}\in
\ss\setminus\{l_{m-1}\},}\atop{l_{0}=k, \, l_{m}=l}}\int_{\R ^{2d}} \cdots
\int_{\R^{2d}}  \wdt{p}^{(l_{0})}(t_{1},z,z_{1}){\d}z_{1}\\
&  \quad \times q_{l_{0}l_{1}}(z_{1}) \wdt{p}^{(l_{1})}(t_{2}-t_{1},z_{1},z_{2}){\d}z_{2}\cdots
q_{l_{m-1}l_{m}}(z_{m})\int_{A}\wdt{p}^{(l_{m})}(t-t_{m},z_{m},z')\d z' {\d}t_{1} {\d}t_{2}
\cdots {\d}t_{m} \\
&\   = \int_{A} p(t,(z,k),(z' , l)) \d z', \end{align*} where we used Fubini's theorem to derive the last inequality, and \begin{align*}
 p& (t,(z,k),(z', l) )\\   &  =   \delta_{kl} \, \wdt{p}^{(k)}(t,z,z') +  \sum_{m=1}^{+\infty} \  \idotsint\limits_{0<t_{1}<\cdots
<t_{m}<t}
  \sum_{{l_{1}\in \ss\setminus\{l_{0}\}, l_{2}\in \ss\setminus\{l_{1}\}, \cdots, l_{m}\in
\ss\setminus\{l_{m-1}\},}\atop{l_{0}=k, \, l_{m}=l}}\int_{\R ^{2d}} \cdots
\int_{\R^{2d}}  \wdt{p}^{(l_{0})}(t_{1},z,z_{1}){\d}z_{1}\\
&  \qquad \times q_{l_{0}l_{1}}(z_{1}) \wdt{p}^{(l_{1})}(t_{2}-t_{1},z_{1},z_{2}){\d}z_{2}\cdots
q_{l_{m-1}l_{m}}(z_{m}) \wdt{p}^{(l_{m})}(t-t_{m},z_{m},z') {\d}t_{1} {\d}t_{2}
\cdots {\d}t_{m}.\end{align*} Since $ \int_{A} p(t,(z,k),(z' , l)) \d z' = P(t,(z,k),A\times \{l\}) \le 1$ for any $A \in \B(\R^{2d})$ and $l\in \ss$, we have $0 \le p(t,(z,k),(z', l) ) < \infty$ a.e. on $\R^{2d}\times \ss$. Moreover, we have $$ \sum_{l\in \ss} \int_{\R^{2d}} p(t,(z,k),(z', l) )\d z' =1.$$ In other words, $p(t,(z,k),(\cdot, \cdot) )$ is the probability density function of $(X(t),\La(t))$.
 \end{Remark}

\section{Exponential ergodicity and large deviations principle}\label{EE}

\subsection{Exponential ergodicity}

This section concerns exponential ergodicity of Markov process $(Z,\La)=(X,Y,\La)$. As in \cite{MeynT-93III}, for any positive function $\Psi(z,k) \ge 1$
defined on $\R ^{2d} \times \ss$ and any signed measure $\nu (\cdot)$
defined on ${\cal B}(\R ^{2d} \times \ss)$, we write
\begin{equation}\label{(5.1)}
\|\nu\|_{\Psi}=\sup \{|\nu (\Phi)|: \hbox{all measurable} \,
\Phi(z,k) \, \hbox{satisfing} \, |\Phi| \leq \Psi\}, \end{equation}
where $\nu (\Phi)$ denotes the integral of function $\Phi$ with
respect to measure $\nu$. Note that the total variation norm
$\|\nu\|$ is just $\|\nu\|_{\Psi}$ with $\Psi
\equiv 1$. Next, for a function $1 \le \Psi < \infty$
on $\R ^{2d} \times \ss$, Markov process $(Z(t),\La(t))$ is said to be
{\em $\Psi$-exponentially ergodic} if there exist a probability measure
$\pi (\cdot)$, a constant $\theta < 1$ and a finite-valued function
$\Theta(x,k)$ such that
\begin{equation}\label{(5.2)}
\|P(t,(z,k),\cdot)-\pi(\cdot)\|_{\Psi} \leq \Theta(z,k)\theta^{t}
\end{equation} for all $t \geq 0$ and all $(z,k) \in \R ^{2d} \times
\ss$.

\begin{Assumption} \label{Qirreducible}{\rm Assume that the
matrix $Q$ is {\em irreducible} on $\R^{2d}$ in the following sense: for any distinct $k, l \in \ss$, there
exist  $r \in \mathbb N$, $k_{0}, k_{1}, \ldots, k_{r} \in \ss$ with $k_{i} \ne
k_{i+1}$, $k_{0}=k$ and $k_{r}=l$ such that the set $\{z\in \R ^{2d}:
q_{k_{i}k_{i+1}}(z)>0\}$ has positive Lebesgue measure for $i=0$,
$1$, $\ldots$, $r-1$. }\end{Assumption}

Let us fix a probability measure $\mu (\cdot)$ that is equivalent to
the product measure on $\R ^{2d} \times \ss$ of the Lebesgue measure
on $\R ^{2d}$ and the counting measure on $\ss$.

\begin{Theorem} \label{thm-petite} Suppose that Assumptions~\ref{sgcvq}  and \ref{Qirreducible} hold. Then $(Z(t),\La(t))$ is $\mu$-irreducible,
where $\mu (\cdot)$ is the reference probability measure defined above. Moreover, for any given $\delta>0$, all
compact subsets of $\R ^{2d} \times \ss$ are petite for the
$\delta$-skeleton chain of $(Z(t),\La(t))$.
\end{Theorem}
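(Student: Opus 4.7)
The plan is to prove $\mu$-irreducibility first, then deduce petiteness from the strong Feller property (Theorem~\ref{thm-sFeller}) via standard Meyn--Tweedie machinery.

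For $\mu$-irreducibility, fix $(z,k) \in \R^{2d}\times\ss$ and a Borel set $B$ with $\mu(B) > 0$. Writing $B = \bigcup_{l \in \ss}(A_l\times\{l\})$, at least one $A_l$ must have positive Lebesgue measure. If $l = k$, the first term in the series \eqref{(FP22)} yields $P(t,(z,k),A_l\times\{k\}) \ge \wdt P^{(k)}(t,z,A_l) > 0$ for any $t>0$ by Lemma~\ref{FP5}. If $l \neq k$, Assumption~\ref{Qirreducible} supplies a chain $k_0 = k, k_1, \dots, k_r = l$ such that each $U_i := \{z \in \R^{2d} : q_{k_i k_{i+1}}(z) > 0\}$ has positive Lebesgue measure. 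I would use the $m = r$ term of \eqref{(FP22)}: restricting $z_i \in U_{i-1}$ guarantees that each $q_{k_{i-1}k_i}(z_i) > 0$. The killed transition densities $\wdt p^{(k_i)}$ of Remark~\ref{existence-of-density} satisfy $\wdt p^{(k_i)}(s,z,z') \ge e^{-Hs}p^{(k_i)}(s,z,z')$ and so are strictly positive for a.e.\ $z'$ by Lemma~\ref{Wu-Prop1.2}. Applying Fubini inductively over the simplex $\{0 < t_1 < \cdots < t_r < t\}$ and the product $U_0 \times \cdots \times U_{r-1}$, together with Lemma~\ref{FP5} to handle the final factor $\wdt P^{(l)}(t-t_r,z_r,A_l)$, yields $P(t,(z,k),A_l\times\{l\}) > 0$. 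This establishes $\mu$-irreducibility.

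For the petiteness claim, fix $\delta > 0$ and consider the $\delta$-skeleton $(Z(n\delta),\La(n\delta))_{n \ge 0}$. The argument above applies to any $t > 0$, so taking $t = \delta$ shows that the one-step kernel $P_\delta$ is $\mu$-irreducible; hence so is the skeleton. Moreover, $P_\delta$ inherits the strong Feller property from Theorem~\ref{thm-sFeller}. Consequently the $\delta$-skeleton is a $T$-chain in the sense of \cite{MeynT-93}, with sampling distribution $\delta_1$ and continuous component $P_\delta$ itself. By Theorem~6.2.5 of \cite{MeynT-93}, every compact subset of $\R^{2d} \times \ss$ is then petite for the $\delta$-skeleton.

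The main subtlety I anticipate is in the $\mu$-irreducibility step: the densities $\wdt p^{(k_i)}$ are only known to be positive a.e., so I must ensure that the union of their null sets does not exhaust the sets $U_0 \times \cdots \times U_{r-1}$ of positive switching rate. A measure-theoretic Fubini argument handles this, since the product of sets of positive Lebesgue measure has positive product measure, and the a.e.\ positivity of each density factor combines into a.e.\ positivity of the full integrand on this product. A minor secondary point is the passage from continuous-time irreducibility to irreducibility of the skeleton; this is essentially automatic here because the same construction works verbatim with the fixed terminal time $t = \delta$.
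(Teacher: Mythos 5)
Your proposal is correct and follows essentially the same route as the paper: positivity of $P(t,(z,k),B\times\{l\})$ is extracted from the representation \eqref{(FP22)} together with Lemma~\ref{FP5} and the chain of states supplied by Assumption~\ref{Qirreducible}, and petiteness of compacts then follows from the strong Feller property via the Meyn--Tweedie $T$-chain machinery. The only differences are that you spell out the Fubini/a.e.-positivity details of the chaining argument that the paper leaves implicit, and that you invoke the strong Feller kernel $P_\delta$ itself as the continuous component (citing the book) where the paper cites \cite[Theorem 3.4]{MeynT-92} using Feller continuity plus the fact that $\mathrm{supp}\,\mu$ has nonempty interior --- an immaterial variation.
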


\para{Proof.}
% Note that for any given $k \in \ss$, the first component of $(Z(t),\La(t))$ coincides with diffusion process $Z^{(k)}(t)$ determined in (\ref{XYk}) on the interval $[0, \tau_{1})$ when $\La(0)=k$ \chao{We need to be a little more precise here. Our result only gives existence and uniqueness for $(Z(t),\La(t))$ and $Z^{(k)}(t)$ in the weak sense. In what sense can we say that $Z(t) = Z^{(k)}(t)$ for $t \in [0,\tau_{1})$?} \fubao{There is no appropriate sense and we should not say that $Z(t) = Z^{(k)}(t)$ for $t \in [0,\tau_{1})$. We should correct this paragraph as you said below.} and $Z^{(k)}(t)$ has a positive transition probability density $p_{k}(t,z,\cdot)>0$ almost everywhere with respect to the Lebesgue measure for each $t>0$ and each $z \in \R ^{2d}$ by Lemma~\ref{Wu-Prop1.2}. Moreover, on each interval $[\tau_{m}, \tau_{m+1})$ the first component of $(Z(t),\La(t))$  coincides with diffusion process $Z^{(l)}(t)$ if $\La(\tau_{m})=l$,  and the sequence $\{\tau_{m}\}$ satisfies    $\lim_{m \to \infty}\tau_{m} =\infty$ a.s. $\bP$ (see the construction of $(Z,\La)$). Therefore, by virtue of %$0<q_{kl}(x)<+\infty$ and its continuity with respect to $x$ for any %$k \ne l \in \ss$ imposed below (\ref{(1.4)}) and in
%Lemma~\ref{FP5} and  Assumption~\ref{Qirreducible}, we obtain from (\ref{(FP22)}) that for any $t>0$, any $(z,k)\in \R ^{2d}\times \ss$, and any $B\times \{l\}\subset \R ^{2d}\times \ss$ such that $B$ having positive Lebesgue measure, $P(t,(z,k),B\times \{l\})>0$.\chao{Does this follows directly from
Thanks to Lemma~\ref{FP5},   Assumption~\ref{Qirreducible},  and \eqref{(FP22)},  for any $t>0$, any $(z,k)\in \R ^{2d}\times \ss$, and any $B\times \{l\}\subset \R ^{2d}\times \ss$ such that $B$ having positive Lebesgue measure, we have  $P(t,(z,k),B\times \{l\})>0$. This, in turn, implies that $$P(t,(z,k),A)>0 \quad \hbox{whenever} \quad \mu(A)>0.$$
%Also, if $P(t,(z,k),B\times \{l\})>0$ for all $t> 0$, $(z,k)$ and $B\times \{l \}$, then the process $(Z,\La)$ is {\em irreducible}; which is a stronger concept than {\em $\mu$-irreducibility}.} \fubao{Yes, you are right, and we should do as you said.}
Therefore,   both $(Z(t),\La(t))$ and its $\delta$-skeleton chain
$(Z(n\delta),\La(n\delta))_{n\ge 0}$ are $\mu$-irreducible (refer to
\cite{MeynT-92, MeynT-93II} for the detailed definition of $\mu$-irreducibility). Note that supp $\mu (\cdot)$ is equal to $\R ^{2d}
\times \ss$ and hence  has non-empty interior. On the other hand,  Theorem~\ref{thm-sFeller} says  that $(Z(t),\La(t))$ is strong Feller and hence Feller. Combining these facts with
\cite[Theorem 3.4]{MeynT-92}, we obtain that all compact subsets of
$\R ^{2d} \times \ss$ are petite for the $\delta$-skeleton chain of
$(Z(t),\La(t))$. This completes the proof. \qed

\begin{Theorem} \label{thm-exp-ergodicity}
Suppose  Assumptions~\ref{sgcvq} and \ref{Qirreducible} hold. %  as well as  \eqref{eq-drfit-condition} hold.
Assume there exists   a nonnegative function $\wdt{V} \in C^{2} (\R ^{2d}
\times \ss; \R_{+})$  % defined on $\R ^{2d} \times \ss$
 satisfying $\wdt{V}(z,k) \to \infty$ as
$|z|\vee k \to \infty$ as well as
a Foster-Lyapunov drift condition:  \begin{equation}\label{eq-drfit-condition}
{\mathcal{A}} \wdt{V}(z,k) \le - \alpha \wdt{V}(z,k) + \beta, \quad (z,k) \in
\R ^{2d} \times \ss,
\end{equation}
where $\alpha$, $\beta >0$  are constants.
 % and a norm-like function $\wdt{V}(z,k)$ which is twice continuously differentiable in $z$, it holds that
%\chao{Shall we try to find conditions on $c$, $V$, $\sigma$ and $Q(z)$ to ensure this condition?}
%  \fubao{We can try. If we can do so, it will be much better. However, I think we can only do so for some good and special cases or models.}
Then Markov process
$(Z(\cdot),\La(\cdot))$ is $\Psi$-exponentially ergodic with
$\Psi(z,k)=\wdt{V}(z,k)+1$ and $\Theta(z,k)=B \bigl(\wdt{V}(z,k)+1\bigr)$, where
$B$ is a finite constant. \end{Theorem}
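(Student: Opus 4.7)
The plan is to invoke the Meyn-Tweedie exponential ergodicity framework, specifically in the form of \cite{MeynT-93III} (or its refinement in Down-Meyn-Tweedie 1995), where the three required ingredients are: (i) $\mu$-irreducibility of the Markov process, (ii) the assertion that all compact sets are petite for some skeleton chain, and (iii) a Foster-Lyapunov drift condition suitable for the extended generator. Ingredients (i) and (ii) are exactly what Theorem~\ref{thm-petite} provides, and the drift condition \eqref{eq-drfit-condition} is the starting point for (iii). So the proof amounts to packaging these pieces correctly.

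First, I would translate the given drift condition into one suited to $\Psi(z,k) := \wdt V(z,k) + 1$. Adding the constant $1$ and using $\A 1 = 0$ yields
\begin{equation*}
\A \Psi(z,k) \le -\alpha \Psi(z,k) + (\alpha + \beta).
\end{equation*}
Next, I would exploit the coercivity $\wdt V(z,k) \to \infty$ as $|z|\vee k \to \infty$: fixing any $0 < \alpha' < \alpha$, there exists a compact set $C \subset \R^{2d} \times \ss$ (a sublevel set of $\wdt V$) such that on $C^{c}$ we have $(\alpha + \beta) \le (\alpha - \alpha')\Psi$, and hence
\begin{equation*}
\A\Psi(z,k) \le -\alpha' \Psi(z,k) + M\, \one_{C}(z,k), \qquad (z,k)\in \R^{2d}\times \ss,
\end{equation*}
for a finite constant $M$ absorbing the bounded contribution on $C$.

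Second, I would apply Dynkin's formula to $\Psi$. Strictly speaking $\Psi$ is not compactly supported, so one localizes by a sequence of stopping times $\tau_n := \inf\{t\ge 0 : \wdt V(Z(t),\La(t))\ge n\}$, applies the martingale formulation \eqref{martingale1} to a smooth truncation of $\Psi$, and lets $n \to \infty$ using Fatou's lemma. This gives the integrated dissipativity estimate
\begin{equation*}
\E^{(z,k)}\bigl[\Psi(Z(t),\La(t))\bigr] \le e^{-\alpha t}\Psi(z,k) + \frac{\alpha+\beta}{\alpha}\bigl(1-e^{-\alpha t}\bigr),
\end{equation*}
which in particular shows non-explosion of $\Psi$ along the path (consistent with Theorem~\ref{thm-general}) and the standard $V$-uniform boundedness of moments.

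Third, the exponential ergodicity conclusion then follows from \cite[Theorem~6.1]{MeynT-93III}: combining the drift inequality with the fact that the compact set $C$ above is petite for the $\delta$-skeleton chain (Theorem~\ref{thm-petite}) and that $(Z,\La)$ is $\mu$-irreducible yields the estimate
\begin{equation*}
\|P(t,(z,k),\cdot) - \pi(\cdot)\|_{\Psi} \le B \Psi(z,k)\, \theta^{t}
\end{equation*}
for some $\theta \in (0,1)$ and $B < \infty$, giving the stated $\Theta(z,k) = B(\wdt V(z,k) + 1)$. The existence of the invariant probability measure $\pi$ is itself a consequence of the drift condition together with irreducibility and Feller continuity.

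The main technical obstacle is the step of justifying the drift inequality at the level of the extended generator, since $\A\wdt V$ need not be bounded and $\wdt V$ is not in $C^\infty_c$: one must use a localization via $\{\tau_n\}$ together with the strong Markov property (Remark~\ref{rem-strong-Markov}) and monotone convergence to pass the pointwise bound \eqref{eq-drfit-condition} through to the semigroup estimates required by the Meyn-Tweedie machinery. All other steps are by-now standard once petiteness of compact sets and the drift inequality are in hand.
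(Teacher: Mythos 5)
Your proposal is correct and follows essentially the same route as the paper: invoke Theorem~\ref{thm-petite} for $\mu$-irreducibility and petiteness of compact sets for the skeleton chain, then feed the Foster--Lyapunov condition \eqref{eq-drfit-condition} into Theorem~6.1 of \cite{MeynT-93III}. The paper simply cites that theorem directly, whereas you additionally unpack the localization and the integrated dissipativity estimate that the citation subsumes; no substantive difference.
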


\para{Proof.} For any given constant $\delta>0$, from
Theorem~\ref{thm-petite}, all compact subsets of $\R ^{2d} \times \ss$ are petite for the $\delta$-skeleton chain
$(Z(n\delta),\La(n\delta))_{n \geq 0}$. Therefore, using  \eqref{eq-drfit-condition}
and applying \cite[Theorem 6.1]{MeynT-93III} to strong Markov
process $(Z(t),\La(t))$, we obtain the desired result. The proof is
complete. \qed

  The sufficient conditions for exponential ergodicity presented in Theorem \ref{thm-exp-ergodicity} depends on the existence of an appropriate Foster-Lyapunov function. Often such a function is not easy to find.
 %  One may see this in Example \ref{example-6.4}  and Proposition \ref{prop-van der Pol} below. \fubao{I added `One may see this in $\cdots \cdots$'. Is it ok? If not, please remove this sentence.}
  Hence, it is more desirable to find sufficient conditions   in terms of the potential, damping, and switching rates of the system \eqref{XY}--\eqref{La}.  In view of this, we impose the following conditions:

\begin{Assumption}\label{assumption-expo-ergodicity} Suppose the following conditions hold:
\begin{itemize}
  \item[(i)] There exists a continuously differential  function $U: \R^{d}\mapsto \R_{+}$  satisfying      \begin{equation}
\label{eq:gradient_U-c*x}
 \liminf_{|x|\to\infty} [\kappa U(x) -  |x|^{2}]  \ge 0,
\ \text{ and }\ \gamma: = \sup_{\substack{(x,y,k)\in \R^{d}\times \R^{d}\times \ss\\ |x| + |y| \ge R}} |u_{k}\nabla U(x) - c^{T}(x,y,k)   x| < \infty,
\end{equation} where $\kappa$ and $R$  are   positive constants, and $\{u_{k}, k\in \ss\}$  are positive numbers.
  \item[(ii)] There exists % a positive constant $R$ and
    a lower bounded and continuously differentiable function $V(x)$ on $\R^{d}$ such that \begin{equation}\label{eq:V(xk)=V(x)}
  V(x,k) = v_{k} V(x), \text{ for all }|x| \ge R \text{ and } k \in \ss,
\end{equation} where $v_{k} > 0$ for each $k \in \ss$ and $R> 0$ is, without loss of generality, the same constant as that in \eqref{eq:gradient_U-c*x}. Moreover, there exist  positive constants $\beta_{1}$, $\beta_{2}$ such that \begin{equation}
\label{eq:x*gradient_V}
\lan x, \nabla V(x)\ran \ge \beta_{1} U(x) + \beta_{2} V(x), \quad \forall |x| \ge R.
\end{equation}
  \item[(iii)] %  [Condition on $q_{kj}$].
  There exists an increasing  function  $\phi:\mathbb{S}\rightarrow [0,\infty)$ satisfying  $\lim_{k\to\infty}\phi(k)  = \infty$
		 and
\begin{equation}\label{ineq_sum q_kj(x)phi(j)}
	\sum_{j\in \mathbb{S}}q_{kj}(x,y)\left[\phi(j)-\phi(k)\right] \leq C_{1} - C_{2}\phi(k)\ \ \text{ for all } k \in \ss \text{ and } x,y \in \mathbb{R}^d,
\end{equation} where $C_{1} \ge 0$ and $C_{2} > 0$  are  constants.
\end{itemize}
\end{Assumption}

\begin{Theorem}\label{thm2-exp-erg}
Suppose  Assumptions \ref{sgcvq}, \ref{Qirreducible}, and \ref{assumption-expo-ergodicity} hold. If
\begin{align}
\label{eq:H->infty}
\lim_{|x| \to \infty}\bigg[ V(x,k) +   \frac{c\wedge 1}{4}  u_{k} U(x)  - \frac{(c\wedge 1)^{2}}{16} |x|^{2} \bigg]=\infty, \ \ \forall  k\in \ss,\\
\label{eq1:Q(x)-expo-ergodicity}
\lim_{|x|+|y| \to\infty}\sum_{j\in \ss} q_{kj}(x,y) v_{j}  +\bigg(\alpha - \frac{c\wedge 1}{4} \beta_{2}\bigg) v_{k} < 0, \ \ \forall  k\in \ss,\\\intertext{and}
\label{eq2:Q(x)-expo-ergodicity}
\lim_{|x|+|y| \to\infty}\sum_{j\in \ss} q_{kj}(x,y) u_{j}  +  \frac{c\wedge 1}{4} u_{k} - \frac{c\wedge 1}{4} \beta_{1}   v_{k} < 0, \ \ \forall  k\in \ss,
\end{align} where  $\alpha > 0$ is a sufficiently small constant  satisfying
$\alpha \le  \min\{\frac{4c}{c+4}, \
\min_{k\in \ss} \frac{2 u_{k}}{\kappa + 2 u_{k}} \}$, and $c> 0$ is  the positive constant   given in Assumption \ref{sgcvq} (ii).
Then the system \eqref{XY}--\eqref{La} is exponentially ergodic.
\end{Theorem}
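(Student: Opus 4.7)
The strategy is to reduce the statement to Theorem~\ref{thm-exp-ergodicity} by exhibiting a $C^{2}$ Foster-Lyapunov function $\wdt V \ge 1$ that is coercive in $(z,k)$ and satisfies $\A\wdt V\le -\alpha\wdt V+\beta$. Motivated by the classical energy-plus-coupling construction for single-regime stochastic Hamiltonian systems and by the coefficient $\tfrac{(c\wedge 1)^{2}}{16}|x|^{2}$ appearing in \eqref{eq:H->infty}, I would propose
\[
\wdt V(x,y,k) := V(x,k) + \tfrac{1}{2}|y|^{2} + \tfrac{c\wedge 1}{4}\langle x,y\rangle + \tfrac{c\wedge 1}{4}\,u_{k}\,U(x) + \phi(k) + C,
\]
with $C>0$ a sufficiently large constant. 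Young's inequality gives
$\tfrac{1}{2}|y|^{2}+\tfrac{c\wedge 1}{4}\langle x,y\rangle \ge \tfrac{1}{4}|y|^{2}-\tfrac{(c\wedge 1)^{2}}{16}|x|^{2}$,
which combined with \eqref{eq:H->infty} and $\phi(k)\to\infty$ confirms $\wdt V\ge 1$ and $\wdt V(z,k)\to\infty$ as $|z|\vee k\to\infty$.

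The next step is to compute $\A\wdt V=\LL_{k}\wdt V+Q\wdt V$ piece by piece. The action of $\LL_{k}$ on $V(\cdot,k)+\tfrac{1}{2}|y|^{2}$ yields $\tfrac{1}{2}\tr(a)-\langle y,cy\rangle$ (the two $\langle y,\nabla_{x}V\rangle$ terms cancel); the cross term $\tfrac{c\wedge 1}{4}\langle x,y\rangle$ produces $\tfrac{c\wedge 1}{4}\bigl[|y|^{2}-\langle x,cy\rangle-\langle x,\nabla_{x}V(x,k)\rangle\bigr]$; and $\tfrac{c\wedge 1}{4}u_{k}U(x)$ produces $\tfrac{c\wedge 1}{4}u_{k}\langle y,\nabla U(x)\rangle$. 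The decisive algebraic observation is that the contributions $-\tfrac{c\wedge 1}{4}\langle x,cy\rangle$ and $\tfrac{c\wedge 1}{4}u_{k}\langle y,\nabla U\rangle$ recombine into
\[
\tfrac{c\wedge 1}{4}\bigl\langle u_{k}\nabla U(x)-c^{T}(x,y,k)x,\,y\bigr\rangle,
\]
which is precisely the object controlled by Assumption~\ref{assumption-expo-ergodicity}(i) and is therefore bounded in modulus by $\tfrac{c\wedge 1}{4}\gamma|y|$. Assumption~(ii) then yields $-\tfrac{c\wedge 1}{4}\langle x,\nabla V(x,k)\rangle\le -\tfrac{c\wedge 1}{4}v_{k}[\beta_{1}U(x)+\beta_{2}V(x)]$ for $|x|\ge R$. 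The $Q$-action, using \eqref{eq:V(xk)=V(x)}, produces $V(x)\sum_{j}q_{kj}(x,y)v_{j}$, $\tfrac{c\wedge 1}{4}U(x)\sum_{j}q_{kj}(x,y)u_{j}$, and $Q\phi(k)\le C_{1}-C_{2}\phi(k)$ by \eqref{ineq_sum q_kj(x)phi(j)}.

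To establish $\A\wdt V+\alpha\wdt V\le\beta$ outside a compact set, I add $\alpha\wdt V$ and group by $|y|^{2}$, $V(x)$, $U(x)$, $|x|^{2}$ (the latter arising from Young-bounding $\tfrac{\alpha(c\wedge 1)}{4}\langle x,y\rangle$ inside $\alpha\wdt V$), and $\phi(k)$. An optimal split of Young's inequality applied to the Assumption~(i) cross term makes the net $|y|^{2}$-coefficient nonpositive precisely under $\alpha\le\tfrac{4c}{c+4}$. The $V(x)$-coefficient is $\sum_{j}q_{kj}v_{j}+(\alpha-\tfrac{c\wedge 1}{4}\beta_{2})v_{k}$, which becomes negative as $|x|+|y|\to\infty$ by \eqref{eq1:Q(x)-expo-ergodicity}. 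The residual $|x|^{2}$ term is absorbed into the $U(x)$ term through the asymptotic bound $|x|^{2}\le\kappa U(x)$ (an immediate consequence of \eqref{eq:gradient_U-c*x}); the resulting $U(x)$-coefficient is then negative by \eqref{eq2:Q(x)-expo-ergodicity} exactly when $\alpha\le\tfrac{2u_{k}}{\kappa+2u_{k}}$ for every $k$. Finally, the $\phi(k)$-coefficient $\alpha-C_{2}$ is negative for $\alpha<C_{2}$. For $\alpha$ as in the hypothesis, all four coefficients are simultaneously nonpositive on $\{|x|+|y|\ge R'\}$ for large $R'$, so the drift condition \eqref{eq-drfit-condition} holds and Theorem~\ref{thm-exp-ergodicity} delivers exponential ergodicity.

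The main obstacle is the simultaneous reconciliation of the four coefficient inequalities with a single admissible $\alpha$: the $|y|^{2}$- and $U(x)$-balances each impose a sharp upper bound on $\alpha$, and it is the careful (though elementary) optimization of the Young's inequality parameters that makes these bounds match the stated thresholds $\tfrac{4c}{c+4}$ and $\tfrac{2u_{k}}{\kappa+2u_{k}}$ exactly.
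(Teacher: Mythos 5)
Your proposal is correct and follows essentially the same route as the paper: the Lyapunov function you propose is exactly the function $H$ of \eqref{eq:Hamilton} (up to the additive normalizing constant), the decomposition of $\LL_{k}H$ with the key recombination into $a\langle u_{k}\nabla U(x)-c^{T}(x,y,k)x,\,y\rangle$, the use of \eqref{eq:x*gradient_V}, \eqref{eq:V(xk)=V(x)} and \eqref{ineq_sum q_kj(x)phi(j)} for the $Q$-part, and the final coefficient-matching that produces the thresholds $\frac{4c}{c+4}$ and $\frac{2u_{k}}{\kappa+2u_{k}}$ all mirror the paper's argument. The only cosmetic difference is that the paper absorbs the $\phi(k)$-term via the rate $\alpha\wedge C_{2}$ rather than imposing $\alpha<C_{2}$.
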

\begin{Remark}{Let us make several remarks concerning these conditions.
\begin{itemize}\parskip=-1pt
  \item[(a)] Conditions \eqref{eq:gradient_U-c*x} and \eqref{eq:V(xk)=V(x)} require that the potential $V$ and damping coefficients  $c$ are somewhat ``homogeneous'' when $|x|$ is large. Furthermore, condition  \eqref{eq:x*gradient_V} assumes that the potential force is sufficiently strong when $|x|$ is large.

  \item[(b)] In case $\ss$ is a finite set, then Assumption \ref{assumption-expo-ergodicity} (iii) is not needed. Indeed, one can use the function $H$ in \eqref{eq:Hamilton} (but without the term $\phi(k)$) to verify exponential ergodicity.
  \item[(c)] Since the potentials and the damping coefficients  in  the system \eqref{XY}--\eqref{La} are different in  distinct regimes,
   % \fubao{I used `diverse' instead of `different'. I am not sure this, and please check and fix it.}
    there is not a ``common'' Lyapunov function as that for the investigation of stability of regime-switching diffusions in Chapter 5 of \cite{MaoY}.  Therefore we have to impose the  technical conditions \eqref{eq:H->infty}--\eqref{eq2:Q(x)-expo-ergodicity}  to verify the Foster-Lyapunov drift condition for the function $H$ of  \eqref{eq:Hamilton}. Example \ref{example-new} below shows that these conditions are sometimes easy to to verify.
\end{itemize} }
\end{Remark}

\para{Proof.} In view of Theorem \ref{thm-exp-ergodicity}, it suffices to verify the Foster-Lyapunov drift condition. To this end, we consider the function \begin{equation}
\label{eq:Hamilton}\begin{aligned}
H(x,y,k) : = &\  V(x,k) + \frac12|y|^{2}  + a \lan x, y\ran + a u_{k}U(x) +\phi(k)  +1 \\ & \ -\inf_{(x,y,k) \in \R^{d}\times \R^{d}\times \ss } \bigg\{ V(x,k) + \frac12|y|^{2}  + a \lan x, y\ran + au_{k}U(x) \bigg\},
\end{aligned}\end{equation} where $a: = \frac{c\wedge 1}{4}$. Note that $H \ge 1$. In addition,  thanks to \eqref{eq:H->infty},  Assumption \ref{assumption-expo-ergodicity}, and the observation that   $\frac12|y|^{2}  + a \lan x, y\ran  =\frac14|y|^{2} + \frac14|y+2a x|^{2} -  a^{2}  |x|^{2}  $,  we have   $\lim_{|x| + |y| + k \to \infty} H(x,y,k) =\infty$.
Next we can compute
\begin{align*}
& \LL_{k}  H(x,y,k) \\  &\   = \lan y, \nabla V(x,k) + ay+ a u_{k}\nabla U(x) \ran - \lan c(x,y,k)y+ \nabla V(x,k), y+ ax\ran + \frac12\|\sigma(x,y,k)\|_{\mathrm{H.S.}}^{2}  \\
  & \  = a |y|^{2} - \lan c^{s}(x,y,k)  y, y\ran - a \lan x, \dif V(x,k)\ran + \frac12\|\sigma(x,y,k)\|_{\mathrm{H.S.}}^{2}
 \\ & \qquad  + a \lan y,  u_{k} \dif U(x)\ran -a \lan c(x,y,k) y, x\ran\\
  &\  \le  (a-c) |y|^{2} - a \lan x, \dif V(x,k)\ran + a \lan y, u_{k}\dif U(x) -   c^{T}(x,y,k)  x\ran + \frac12 \hat\sigma^{2},
\end{align*} for all $(x,y,k) \in \R^{d}\times \R^{d}\times \ss$ with $|x| \ge L$, where the last inequality follows from Assumption \ref{sgcvq} (ii) and (iii); in particular, the positive constants $c$ and $\hat \sigma$ are specified there. Furthermore, using conditions \eqref{eq:gradient_U-c*x} and \eqref{eq:x*gradient_V}, we have
\begin{align}\label{eq:Lk-exp-erg}
\nonumber \LL_{k}  H(x,y,k) & \le (a-c) |y|^{2} - a\beta_{1} v_{k} U(x)  -a\beta_{2} v_{k} V(x) + a \gamma |y|   +    \frac12 \hat\sigma^{2}\\
 & \le  (2a-c) |y|^{2} - a\beta_{1} v_{k} U(x)  -a\beta_{2} v_{k} V(x)   +    \frac12 \hat\sigma^{2} + \frac14\gamma^{2},
\end{align} for all $(x,y,k) \in \R^{d}\times \R^{d} \times \ss$ with $|x| \wedge |y|\ge R$, where we used the elementary  Young's  inequality $\gamma |y| \le |y|^{2} + \frac14\gamma^{2}$ to derive the last inequality. On the other hand, using \eqref{eq:V(xk)=V(x)} and \eqref{ineq_sum q_kj(x)phi(j)}, we derive\begin{align}\label{eq:Q-exp-erg}
\nonumber Q(x,y) H(x,y,k)   & = \sum_{j\in \ss} q_{kj}(x,y) [V(x,j)  + a u_{j} U(x) + \phi(j)] \\ & \le   V(x)   \sum_{j\in \ss} q_{kj}(x,y) v_{j}  + a U(x)\sum_{j\in \ss} q_{kj}(x,y) u_{j} + C_{1} - C_{2} \phi(k) \end{align} for all $|x| \ge R$.  Combining the inequalities \eqref{eq:Lk-exp-erg} and \eqref{eq:Q-exp-erg}, we obtain
\begin{align*}
\A H(x,y,k) &  \le  (2a-c) |y|^{2} +  \bigg(   \sum_{j\in \ss} q_{kj}(x,y) u_{j} - a\beta_{1} v_{k} \bigg)  U(x) \\  & \qquad  + \bigg(   \sum_{j\in \ss} q_{kj}(x,y) v_{j} -a\beta_{2} v_{k} \bigg ) V(x)  - C_{2} \phi(k) +  K_{1},
\end{align*} for all $(x,y,k) \in \R^{d}\times \R^{d}\times \ss$ with $|x| \ge L \vee R$, where $K_{1} = K_{1}(C_{1}, \hat\sigma, \gamma) $ is a positive constant.  Condition \eqref{eq1:Q(x)-expo-ergodicity}  and  \eqref{eq2:Q(x)-expo-ergodicity} imply that there exists an $M_{1} > 0$ such that
\begin{displaymath}
 \sum_{j\in \ss} q_{kj}(x,y) v_{j} -a\beta_{2} v_{k} \le -\alpha v_{k},\ \text{ and } \ \sum_{j\in \ss} q_{kj}(x,y) v_{j} -a\beta_{1} v_{k} \le - a u_{k}
\end{displaymath}    for all   $  (x,y,k)\in \R^{d}\times \R^{d}\times \ss\text{ with }|x| + |y| \ge M_{1}$. Thus we have \begin{align}\label{eq:AH-upper_bound}
\A H(x,y,k) &  \le  -\frac{c}{2} |y|^{2} -   a     u_{k} U(x) -\alpha v_{k}   V(x)  - C_{2} \phi(k) +  K_{1}
\end{align} for all $(x,y,k) \in \R^{d}\times \R^{d}\times \ss$ with $|x| \wedge |y| \ge L \vee R\vee M_{1}$.  By the choice of $\alpha $, % in \eqref{eq:alpha},
 we can verify directly that \begin{displaymath}
 -\frac{c}{2}+\frac{\alpha}{2} + \alpha  \frac a2 \le 0, \ \text{ and } \  %\alpha\bigg(1+ \frac{ \kappa}{2}\bigg)  -\beta_{1} v_{k} \le 0
 \alpha \bigg(\frac{\kappa}{2} + u_{k} \bigg) \le u_{k}\ \text{ for each } k \in \ss.
\end{displaymath} Thus it follows that \begin{displaymath}
 -\frac{c}{2} |y|^{2} -   a      u_{k} U(x)\le -\alpha \bigg(\frac12 |y|^{2} +  a     u_{k} U(x)+\frac{a}{2} (|y|^{2} + \kappa U(x)) \bigg).
\end{displaymath}On the other hand, \eqref{eq:gradient_U-c*x} implies that there exists a positive constant $M_{2}$ such that  \begin{displaymath}
 a    \lan x, y\ran \le   \frac{a}{2} (|x|^{2} + |y|^{2} ) \le  \frac{a}{2}(\kappa U(x) + |y|^{2} )
\end{displaymath} for all $(x,y)$ with $|x| \ge M_{2}$. Thus it follows that \begin{displaymath}
 -\frac{c}{2} |y|^{2} -   a    u_{k} U(x)\le -\alpha \bigg(\frac12 |y|^{2} +  a  u_{k}   U(x)+ a    \lan x, y\ran \bigg), \quad \forall |x| \ge M_{2}.
\end{displaymath} Putting this into \eqref{eq:AH-upper_bound}, we obtain for some $K_{2} > 0$\begin{align*}
 \A H(x,y,k) & \le -\alpha \bigg(\frac12 |y|^{2} + V(x,k) +  a  u_{k}   U(x)+ a    \lan x, y\ran \bigg) - C_{2} \phi(k) + K_{1}  \\
  & \le -(\alpha \wedge C_{2}) H(x,y,k) + K_{2}
\end{align*} for all $(x,y,k) \in \R^{d}\times \R^{d}\times \ss$ with $|x| \wedge |y| \ge L \vee R\vee M_{1} \vee M_{2}$. Finally by choosing $K_{3} > 0$ sufficiently large, we have \begin{displaymath}
 \A H(x,y,k)  \le  -(\alpha \wedge C_{2}) H(x,y,k) + K_{3}, \quad \text{ for all }\  (x,y,k) \in \R^{d}\times \R^{d}\times \ss.
\end{displaymath} The proof is complete.\qed

% {\large Some thoughts about exponential ergodicity and large deviation.} {\magenta I think the following is why Wu(2001) asserts that ``the LDPs in Theorem 2.1 hold'' in his Theorem 3.1.}
\subsection{Large deviations principle}
{% \chao{Please make sure the following is correct. I copied these from Wu's paper without much deep understanding.} \fubao{Yes, for citation we check the the following to make sure its correctness. However, deep understanding these concise and important conceptions may need much practice in the future.}

Next we consider the large deviation principle (LDP) for the occupation empirical measure $$L_{t}(\cdot) : = \frac1t \int_{0}^{t}\delta_{(Z(s), \La(s))}(\cdot)\d s,$$ where $\delta_{\cdot}$ denotes the Dirac measure, and for the process-level empirical measures $$R_{t}(\cdot): = \frac1t\int_{0}^{t} \delta_{(Z(s+\cdot),\La(s+\cdot) )}(\cdot)\d s,$$ where $(Z(s+\cdot),\La(s+\cdot) )$ denotes the path $[0,\infty) \ni t\to (Z(s+t),\La(s+ t) )$, which is an element  in $\Omega =C([0,\infty),\R^{2d})\times D([0,\infty),\ss)$. Write $E: = \R^{2d}\times \ss$,   $\PP(E)$ and $\M(E)$   the space of probability  and signed measures of bounded total variations on $E$,  respectively.   Likewise, denote by $\PP(\Omega)$ and $\M(\Omega)$ the space of probability and signed measures of bounded variations on $\Omega$, respectively. Note that $L_{t} \in \PP(E)$ and $R_{t}\in \PP(\Omega)$. We refer to \cite{Wu-01} as well as  Chapter 6 of \cite{DemboZ-10} for terminologies and in particular the rate functions $J $  and $ H$ to be used in the statement of Proposition \ref{prop-LDP}.

\begin{Proposition}\label{prop-LDP}
% \chao{Some naive thoughts:\begin{enumerate}
%  \item Compared with Corollary 2.2 of \cite{Wu-01}, this proposition is not so interesting. Note that \eqref{eq-LDP-condition} is equivalent to the inf-compactness condition (C1) in Corollary 2.2 of \cite{Wu-01}.
 %  \item On the other hand, in view of Theorems 2.6 and 2.7 in \cite{Wu-01}, under the conditions of Theorem \ref{thm-exp-ergodicity}, we can claim directly that the average occupation measure for $(Z,\La)$ satisfies the moderate deviation principle.
%   \item In view of Theorem 3.1 of \cite{Wu-01}, we should find conditions on the coefficients of \eqref{XY} and possibly the invariant measure of $\La$ (in case it is a continuous-time Markov chain) as well so that the LDP holds for $(Z,\La)$. Is it possible that the LDP does not hold for some (or all) subsystems but the LDP holds for the regime-switching system? If so, then we demonstrate that the switching mechanism plays an important role in the study of LDP (as in the study of stabilities).
% \end{enumerate} }
   Suppose that Assumptions~\ref{sgcvq}  and \ref{Qirreducible} hold.   In addition, suppose there exists a norm-like function $1 \le W(z,k)$ satisfying \begin{equation}
\label{eq-LDP-condition}
\lim_{|z| + k \to \infty} \frac{\A W(z,k)}{W(z,k)} = -\infty.
\end{equation} Then  the process $(Z,\La)$ possesses a unique invariant measure $\pi\in \PP(E)$. Moreover,  for any $\lambda> 0$, we can find a compact $\mathbf K \subset\subset \R^{d}\times \ss$ such that for any $\mathbf K' \subset\subset  \R^{d}\times \ss$ and $T \ge 0$, we have
\begin{equation}
\label{eq-LDP}
\sup_{(z,k) \in \mathbf K'} \E_{z,k} [\exp\{\lambda \tau_{\mathbf K}(T) \}] < \infty,
\end{equation} where $\tau_{\mathbf K}(T) : = \inf\{t \ge T: (Z(t), \La(t)) \in \mathbf K \}$. This verifies condition (d) of Theorem 2.1 in \cite{Wu-01} and hence the process $(Z,\La)$ satisfies the large deviations principle:
\begin{itemize}
  \item[(a)] $\P_{(z,k)}(L_{t} \in \cdot)$ satisfies  the LDP on $\PP(E)$ \wrt the $\tau$-topology with the rate function $J$; uniformly for initial states $(z,k)\in \R^{2d}\times \ss$ in the compacts. More precisely; the following three properties   hold:\begin{itemize}
  \item[(a.1)] $J$ is inf-compact \wrt the $\tau$-topology, i.e., for any $L \ge 0$, $\{\nu \in \PP(E): J(\nu)\le L\}$ is compact in $(\PP(E), \tau)$;
  \item[(a.2)] $($the lower bound$)$ for any $\tau$-open ${\mathbf G} \in \M^{\tau} $ and  ${\mathbf K }\subset\subset E$,  $$\liminf_{t\to\infty} \frac1t\log\inf_{(z,k) \in \mathbf K} \P_{(z,k)} \{ L_{t} \in \mathbf G\} \ge -\inf\{J(\nu); \nu \in \mathbf G\};$$
  \item [(a.3)] $($the upper bound$)$ for any $\tau$-closed $\mathbf F\in \M^{\tau}$ and ${\mathbf K }\subset\subset E$,
  \begin{displaymath}
\limsup_{t\to\infty}\frac1t\log\sup_{(z,k) \in \mathbf K} \P_{(z,k)} \{ L_{t} \in \mathbf F\} \le -\inf\{J(\nu); \nu\in \mathbf F\};
\end{displaymath}
\end{itemize}
  \item[(b)]  $\P_{(z,k)}(L_{t} \in \cdot)$  satisfies the LDP on $\PP(E)$ \wrt the weak convergence topology
with the rate function $J$; uniformly for initial states $(z,k)$ in the compact subsets of $E$.
 \item [(c)] $\P_{(z,k)} (R_{t}\in \cdot)$ satisfies the LDP on $\PP(\Omega)$ \wrt the $\tau_{p}$-topology with the rate function $H$; uniformly for initial states $(z,k)$ in the compact subsets of $E$.

\end{itemize}

 % [{\blue But I don't understand   the proof of Theorem 2.1 yet.}]
 % \fubao{Large deviation principle is a sophisticated concept. To understand it and some proofs related it needs to take time.}
\end{Proposition}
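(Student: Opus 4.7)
The plan is to verify hypothesis (d) of \cite[Theorem 2.1]{Wu-01}, since the strong Feller property (Theorem \ref{thm-sFeller}) together with the $\mu$-irreducibility and petite-set property of compact sets (Theorem \ref{thm-petite}) already supply the other topological and irreducibility ingredients required there. The central observation is that $\A W/W\to-\infty$ is a very strong drift condition: for every $\lambda>0$ there is a compact set $\mathbf K_\lambda\subset\R^{2d}\times\ss$ with $\A W(z,k)\le-\lambda W(z,k)$ on $\mathbf K_\lambda^c$. Because compact subsets of $\R^{2d}\times\ss$ are necessarily finite in the $\La$-coordinate and $W\in C^2$, the function $\A W$ is bounded on $\mathbf K_\lambda$, so one has the global Foster--Lyapunov inequality
\begin{equation*}
\A W(z,k)\le-\lambda W(z,k)+\beta_\lambda\mathbf{1}_{\mathbf K_\lambda}(z,k)
\end{equation*}
for some finite constant $\beta_\lambda$, which in particular implies non-explosion of $(Z,\La)$.

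For existence and uniqueness of $\pi$, I would feed the above inequality (with any fixed $\lambda>0$) into Theorem \ref{thm-exp-ergodicity} with $\wdt V=W$; the resulting $\Psi$-exponential ergodicity with $\Psi=W+1$ immediately furnishes a unique $\pi\in\PP(E)$.

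For the exponential integrability of return times, fix $\lambda>0$ and set $\mathbf K=\mathbf K_\lambda$. After localization at $T_N:=\inf\{t\ge 0:|Z(t)|+\La(t)\ge N\}$ (to legitimize Dynkin's formula for the merely $C^2$, non-compactly-supported $W$), I would apply the formula to $(t,z,k)\mapsto e^{\lambda t}W(z,k)$ and use
\begin{equation*}
(\partial_t+\A)\bigl(e^{\lambda t}W\bigr)(z,k)=e^{\lambda t}\bigl(\lambda W(z,k)+\A W(z,k)\bigr)\le 0\quad\text{on }\mathbf K^c
\end{equation*}
to conclude that $e^{\lambda(t\wedge\tau_{\mathbf K}\wedge T_N)}W(Z(t\wedge\tau_{\mathbf K}\wedge T_N),\La(t\wedge\tau_{\mathbf K}\wedge T_N))$ is a nonnegative supermartingale. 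Letting $N\to\infty$ and then $t\to\infty$ via Fatou's lemma, together with $W\ge 1$, yields
\begin{equation*}
\E_{(z,k)}\bigl[e^{\lambda\tau_{\mathbf K}}\bigr]\le W(z,k),\qquad(z,k)\in\R^{2d}\times\ss,
\end{equation*}
with the convention $\tau_{\mathbf K}=0$ on $\mathbf K$. Writing $\tau_{\mathbf K}(T)=T+\inf\{s\ge 0:(Z(T+s),\La(T+s))\in\mathbf K\}$, the strong Markov property (Remark \ref{rem-strong-Markov}) then gives
\begin{equation*}
\E_{(z,k)}\bigl[e^{\lambda\tau_{\mathbf K}(T)}\bigr]=e^{\lambda T}\,\E_{(z,k)}\bigl[\E_{(Z(T),\La(T))}[e^{\lambda\tau_{\mathbf K}}]\bigr]\le e^{\lambda T}\,\E_{(z,k)}\bigl[W(Z(T),\La(T))\bigr].
\end{equation*}
Since $\A W\le\beta_\lambda$ globally, a parallel Dynkin computation applied to $W$ alone bounds $\E_{(z,k)}[W(Z(T),\La(T))]\le W(z,k)+\beta_\lambda T$, which is uniformly bounded on any compact $\mathbf K'$, establishing \eqref{eq-LDP}.

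With hypothesis (d) of \cite[Theorem 2.1]{Wu-01} thus verified, invoking that theorem in conjunction with Theorems \ref{thm-sFeller} and \ref{thm-petite} yields the LDP statements (a)--(c). The main technical obstacle is the localization step: $W$ is only $C^2$ and grows to infinity, so it does not lie in the test class $C_c^\infty(\R^{2d}\times\ss)$ featured in Definition \ref{defn-mg-A}; one must first upgrade the martingale identity to a Dynkin identity for $W$ by truncating at $T_N$, and then remove the localization at the end by means of the non-explosion noted above.
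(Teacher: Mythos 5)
Your argument is correct, and it follows the same overall strategy as the paper: reduce everything to Theorem 2.1 of \cite{Wu-01} by combining the strong Feller property (Theorem \ref{thm-sFeller}) and irreducibility/petiteness (Theorem \ref{thm-petite}) with the hyper-exponential recurrence condition (d). The difference is in how condition (d) is obtained. The paper's published proof is two sentences long: it observes that \eqref{eq-LDP-condition} is equivalent to inf-compactness of $-\A W/W$ on $E$ and then cites Corollary 2.2 of \cite{Wu-01}, which packages precisely the implication ``inf-compact $-\A W/W$ $\Rightarrow$ hyper-exponential recurrence.'' You instead prove that implication directly: extract a compact $\mathbf K_\lambda$ with $\A W\le-\lambda W$ off $\mathbf K_\lambda$, upgrade to a global Foster--Lyapunov inequality, run the localized Dynkin/supermartingale argument on $e^{\lambda t}W$ up to $\tau_{\mathbf K}\wedge t\wedge T_N$, pass to the limit by Fatou, and finish with the strong Markov property at time $T$ together with the linear-in-$T$ bound on $\E_{(z,k)}[W(Z(T),\La(T))]$. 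This is a sound, self-contained substitute for the citation (and, incidentally, is essentially the computation the authors drafted but did not include), at the cost of two implicit regularity assumptions you should flag: that $W$ is $C^2$ in $z$ with $\sum_{l}q_{kl}(x,y)W(x,y,l)<\infty$ so that $\A W$ is well defined and continuous (hence bounded on compacts), and that the localized Dynkin formula extends from $C_c^\infty$ test functions to such $W$ --- both of which are standard and are equally implicit in the paper's Theorem \ref{thm-exp-ergodicity}. Your derivation of existence and uniqueness of $\pi$ via Theorem \ref{thm-exp-ergodicity} rather than via \cite{Wu-01} is a harmless variant.
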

 \para{Proof.} Under Assumptions~\ref{sgcvq}  and \ref{Qirreducible}, the process $(Z,\La)$ is strong Feller and irreducible by Theorems \ref{thm-sFeller} and \ref{thm-petite}, respectively. Since \eqref{eq-LDP-condition} is equivalent to the assertion that  the  function $- \frac{\A W}{W}$ is inf-compact on $E$, this proposition then follows  directly from Corollary 2.2 and Theorem 2.1 of \cite{Wu-01}. \qed  % Nevertheless we provide an elementary proof for \eqref{eq-LDP} here.
 % \chao{Do we need the proof at all? Or just delete it and refer to Wu's paper?}
 %  \fubao{In this sentence, we can state the like that `For a subsequent use, we sketch the proof for \eqref{eq-LDP} here though the technique is not new.'}

%\fubao{The $\ss=\{1,2\}$ in this example has only two states.}
\subsection{Examples} % To conclude this section,
We study several   examples in this subsection. Example \ref{example-new} is concerned with an exponentially ergodic {\it stochastic Langevin equation} with regime-switching; it demonstrates the utility of  Theorem \ref{thm2-exp-erg}.
 % and show that Theorem~\ref{thm-exp-ergodicity} can be applied. We now consider
%  With  more detailed computations,
We next consider
{\it a stochastic van der Pol equation} with state-dependent switching in Example \ref{example-6.4} and show that in addition to the exponential ergodicity, it also satisfies the LDPs of Proposition \ref{prop-LDP}. Lastly, Example \ref{new-exm}   deals with an {\em overdamped Langevin equation} with regime-switching; it shows that even some subsystem does not satisfy the LDPs, the regime-switching system satisfies the LDPs due to switching.  %\fubao{To add Example \ref{example-new}, these two sentences need to revise.}

 \begin{Example}\label{example-new}
 % \chao{I am not sure whether we should keep this example. It is somewhat similar to Example \ref{example-6.4} but with stronger potential force. At one hand, we can use Theorem \ref{thm2-exp-erg} to conclude exponential ergodicity; but on the other hand, the calculations in Example \ref{example-6.4} actually reveal that the LDPs hold, in addition to the  exponential ergodicity.} \fubao{Since it is easy to verify that this example satisfy the conditions in Theorem \ref{thm2-exp-erg}, we can keep it.}
 Let $d=1$ and $\ss=\{1,2\}$. Take the functions $c(x,y,k)$ and $V(x,k)$ in (\ref{XY}) and $q_{kl}(x,y)$ in (\ref{La}) as follows. For $(x,y,k)\in \R ^{2}\times \{1,2\}$, define \begin{align*}    &   c(x,y,k)= c_{k} \quad \text{ and }\quad V (x,k)=  v_{ k}  x^{4}\quad \text{ for } |x| \ge 2,   \\    &   Q(x,y):=(q_{kl}(x,y))=\left(\begin{array}{cc} -2 +\exp (-|x|- y^{2})& 2-\exp (-|x|-y^{2}) \\ \frac{1}{|x|^{2}+|y|+1} &-\frac{1}{|x|^{2}+|y|+1}\end{array} \right),\end{align*} where $c_{1} =2$, $c_{2} =1$, and $v_{1}$, $v_{2}$ are positive constants satisfying  \begin{equation}
\label{eq:v12-relation}
1\le v_{2} < \frac{11}{8} v_{1}.
\end{equation}  In addition, let $\sg(x,y,k)$ in (\ref{XY}) be just as in Assumption~\ref{sgcvq}.

We now verify that all conditions in Theorem \ref{thm2-exp-erg} are satisfied and hence the system  (\ref{XY})--(\ref{La}) is exponentially ergodic. Obviously Assumptions \ref{sgcvq} and \ref{Qirreducible} hold. In particular, we can take $c =1$ in  Assumption \ref{sgcvq} (ii). Next we show that   Assumption \ref{assumption-expo-ergodicity} holds as well.  Indeed, with $U(x) : =  x^{2}$ and $V(x) = x^{4}$, it is immediate to verify  \eqref{eq:gradient_U-c*x}   with $\kappa =1$, $u_{1} = 2, u_{2} =1$, and $\gamma =0$ . Likewise, we can verify  \eqref{eq:x*gradient_V} with $\beta_{1} =1$ and $\beta_{2} =3$.

It remains to verify   \eqref{eq:H->infty}--\eqref{eq2:Q(x)-expo-ergodicity}. Obviously \eqref{eq:H->infty} is satisfied. For any $v_{1}$, $v_{2}$ satisfying \eqref{eq:v12-relation}, we can find a sufficiently small $0< \alpha <  \frac14$ so that $v_{2} < (\frac{11}8-\frac{\alpha}{2} ) v_{1}$.  This leads to  \eqref{eq1:Q(x)-expo-ergodicity} since   $\lim_{|x| + |y|\to \infty} q_{12} (x,y) =2$ and $\lim_{|x| + |y|\to \infty} q_{21} (x,y) =0$. In a similar manner, we can verify \eqref{eq2:Q(x)-expo-ergodicity}. The desired exponential ergodicity follows from Theorem  \ref{thm2-exp-erg}.
 \end{Example}

\begin{Example} \label{example-6.4}
%\chao{The van der Pol system without noise is given by ${\displaystyle {\d^{2}x \over \d t^{2}}-\mu (1-x^{2}){\d x \over \d t}+x=0,}$ where $x$ is the position coordinate, and $\mu$ is a scalar parameter indicating the nonlinearity and the strength of the damping. When $\mu> 0$, the system will enter a limit cycle. Near the origin $x = \d x/\d t = 0$, the system is unstable, and far from the origin, the system is damped. Now we have Brownian noise and regime-switching, any physical interpretations we can make as a result of the noises? Or regime switching makes a difference? } \fubao{We can only say the system is still stable under Brownian noise and regime-switching.}
 Let $d=1$ and $\ss=\{1,2\}$. Take the functions $c(x,y,k)$ and $V(x,k)$
in (\ref{XY}) and $q_{kl}(x,y)$ in (\ref{La}) as follows. For
$(x,y,k)\in \R ^{2}\times \{1,2\}$, define \begin{align*}
    &   c(x,y,k)= \alpha (k) (x^{2}-1), \quad V(x,k)=\frac{1}{2} \beta (k) x^{2},  \\
    &   Q(x,y):=(q_{kl}(x,y))=\left(\begin{array}{cc}
  -\exp (-|x|^3)& \exp (-|x|^3) \\
 \frac{\wdt{H}}{|x|^{2}+|y|^{2}+1} &
-\frac{\wdt{H}}{|x|^{2}+|y|^{2}+1}
\end{array} \right),
\end{align*}
where $\alpha (1)=1$, $\alpha (2)=2$, $\beta (1)=2$ and
$\beta (2)=1$, and $\wdt{H}$ is an arbitrary positive constant.
Moreover, let $\sg(x,y,k)$ in (\ref{XY}) be just as in
Assumption~\ref{sgcvq}. Now equation (\ref{XY}) reads
\begin{equation}\label{(3.7a)}
\begin{aligned}
{\d}X(t)& =Y(t) {\d}t,\\
{\d}Y(t)& =-\bigl(\al(\La(t))Y(t)(X^{2}(t)-1)+\beta
(\La(t))X(t)\bigr){\d}t
+\sg(X(t),Y(t),\La(t)){\d}B(t).
\end{aligned}\end{equation}

\begin{Proposition} \label{prop-van der Pol}
The van der Pol system \eqref{(3.7a)} is exponentially ergodic   and satisfies the large deviation principles of Proposition \ref{prop-LDP}.
% For the system given by (\ref{(3.7a)}),
% Theorem~\ref{6.1} is applicable.\chao{I think we shall state this result as: The van der Pol system \eqref{(3.7a)} is exponentially ergodic.} \fubao{Yes, we can say so.}
\end{Proposition}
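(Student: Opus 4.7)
\para{Proof plan.} The strategy is to verify Assumptions \ref{sgcvq} and \ref{Qirreducible}, establish a Foster--Lyapunov drift condition, and then invoke Theorem \ref{thm-exp-ergodicity} for exponential ergodicity and Proposition \ref{prop-LDP} for the LDPs. The standing assumptions are easily checked: in Assumption \ref{sgcvq}, parts (i) and (iii) are immediate; for (ii), $c(x,y,k)=\alpha(k)(x^{2}-1)$ is continuous and for $|x|>L:=2$ we have $c^{s}(x,y,k)\ge 3>0$ uniformly in $y$; for (iv), $q_{12}\le 1$ and $q_{21}\le\wdt H$, so \eqref{eq-Q-new-cond} holds with $H=1\vee\wdt H$. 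Assumption \ref{Qirreducible} follows from the strict positivity of $q_{12}(x,y)=e^{-|x|^{3}}$ and $q_{21}(x,y)=\wdt H/(|x|^{2}+|y|^{2}+1)$ on $\R^{2}$, which ensures that the set $\{z:q_{kl}(z)>0\}$ has full Lebesgue measure for both off-diagonal entries.

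For the exponential ergodicity I would construct a Lyapunov function inspired by the Li\'enard-type structure of the van der Pol oscillator. A natural candidate is
\[
\wdt V(x,y,k):=V(x,k)+\tfrac12 y^{2}+\theta xy+\theta\alpha(k)\Big(\tfrac{x^{4}}{4}-\tfrac{x^{2}}{2}\Big)+C_{0},
\]
where $\theta,C_{0}>0$ are chosen so that $\wdt V\ge 1$ is norm-like. A direct calculation in which the cubic cross-terms $\alpha(k)(x^{2}-1)xy$ cancel yields
\[
\LL_{k}\wdt V(x,y,k)=\big(\theta-\alpha(k)(x^{2}-1)\big)y^{2}-\theta\beta(k)x^{2}+\tfrac12\|\sg(x,y,k)\|_{\mathrm{H.S.}}^{2},
\]
which is strongly dissipative when $|x|$ is large. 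The switching contribution $Q\wdt V(x,y,k)$, computed via \eqref{Q(x,y)} using $\wdt V(x,y,3-k)-\wdt V(x,y,k)=\pm\big[\tfrac{\theta}{4}x^{4}-\tfrac{1+\theta}{2}x^{2}\big]$, provides additional $x^{4}$-dissipation in state $k=2$ through the decay $q_{21}\sim\wdt H/(|x|^{2}+|y|^{2}+1)$ and is exponentially small in state $k=1$ because of the factor $e^{-|x|^{3}}$. Balancing these terms against $\LL_{k}\wdt V$ through suitable choices of $\theta$, $\wdt H$, and $C_{0}$, one obtains $\A\wdt V\le -\alpha\wdt V+\beta$ outside a large compact set. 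Theorem \ref{thm-exp-ergodicity} then delivers exponential ergodicity with $\Psi=\wdt V+1$.

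For the LDPs I would take $W(x,y,k):=\exp\{\mu\wdt V(x,y,k)\}$ for a sufficiently small $\mu>0$. Then $W\ge 1$ is norm-like; using $\A W=\mu W\,\A\wdt V+\tfrac{\mu^{2}}{2}W(\partial_{y}\wdt V)^{2}\|\sg\|_{\mathrm{H.S.}}^{2}$ and $\partial_{y}\wdt V=y+\theta x$, one gets
\[
\frac{\A W(x,y,k)}{W(x,y,k)}\le -\mu\alpha\wdt V(x,y,k)+\mu\beta+\tfrac{\mu^{2}\hat{\sg}^{2}}{2}(y+\theta x)^{2}.
\]
Since $\wdt V$ grows quartically in $x$ and quadratically in $y$, while $(y+\theta x)^{2}$ is only quadratic, choosing $\mu$ small enough forces $\A W/W\to-\infty$ as $|z|+k\to\infty$. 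This verifies condition \eqref{eq-LDP-condition}, and Proposition \ref{prop-LDP} delivers the LDPs.

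The main obstacle is the construction of $\wdt V$ itself: the van der Pol anti-damping on $\{|x|<1\}$ makes the $y^{2}$-coefficient of $\LL_{k}\wdt V$ positive, so no purely energy-based Lyapunov gives pointwise dissipation in that region. Overcoming this forces one to exploit the specific algebraic form of the switching rates---in particular the $(|x|^{2}+|y|^{2}+1)^{-1}$ decay of $q_{21}$ combined with the quartic-in-$x$ gap between $\wdt V(\cdot,\cdot,1)$ and $\wdt V(\cdot,\cdot,2)$---to recover the drift estimate, and the bulk of the work goes into verifying that the resulting constants can indeed be simultaneously balanced.
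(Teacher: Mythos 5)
Your reduction to Theorem \ref{thm-exp-ergodicity} and Proposition \ref{prop-LDP} and your verification of Assumptions \ref{sgcvq} and \ref{Qirreducible} are fine, but the Lyapunov function you propose does not satisfy the drift condition, and the mechanism you invoke to repair it cannot work. With $\wdt V(x,y,k)=V(x,k)+\tfrac12 y^{2}+\theta xy+\theta\alpha(k)(\tfrac{x^{4}}{4}-\tfrac{x^{2}}{2})+C_{0}$, your own computation gives $\LL_{k}\wdt V=\bigl(\theta-\alpha(k)(x^{2}-1)\bigr)y^{2}-\theta\beta(k)x^{2}+\tfrac12\sg^{2}(x,y,k)$, whose $y^{2}$-coefficient is at least $\theta>0$ throughout the strip $\{|x|\le 1\}$. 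The switching term cannot compensate there: $\wdt V(x,y,2)-\wdt V(x,y,1)=\tfrac{\theta}{4}x^{4}-\tfrac{1+\theta}{2}x^{2}$ is independent of $y$, bounded on $\{|x|\le 1\}$, and identically $0$ at $x=0$, while $q_{12}\le 1$ and $q_{21}\le\wdt H$, so $Q\wdt V$ is bounded on the strip. Hence $\A\wdt V(0,y,k)=(\theta+\alpha(k))y^{2}+\tfrac12\sg^{2}(0,y,k)\to+\infty$ as $|y|\to\infty$, whereas $-\alpha\wdt V(0,y,k)+\beta\sim-\tfrac{\alpha}{2}y^{2}$; no choice of $\theta$, $\wdt H$, $C_{0}$ yields \eqref{eq-drfit-condition}. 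The LDP step inherits this failure and adds two more: the chain rule $\A e^{\mu\wdt V}=\mu e^{\mu\wdt V}\A\wdt V+\tfrac{\mu^{2}}{2}e^{\mu\wdt V}(\partial_{y}\wdt V)^{2}\sg^{2}$ is false for the jump part (one gets $q_{k,3-k}\bigl[e^{\mu(\wdt V(\cdot,3-k)-\wdt V(\cdot,k))}-1\bigr]$, not $\mu\,q_{k,3-k}\bigl[\wdt V(\cdot,3-k)-\wdt V(\cdot,k)\bigr]$), and since your regime gap grows like $\tfrac{\theta}{4}x^{4}$ while $q_{12}=e^{-|x|^{3}}$ decays only cubically in the exponent, $q_{12}(x,y)\bigl[e^{\mu(\wdt V(x,y,2)-\wdt V(x,y,1))}-1\bigr]\to\infty$ for every $\mu>0$.

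The missing idea is the one the paper uses: the $y^{2}$-dissipation on the anti-damping strip must be injected through the transport term $\langle y,\nabla_{x}\cdot\rangle$, not through the switching. The paper takes $F=aH+(bG(x)+W(x))y+bU(x,k)$ with a smooth, compactly supported corrector $W$ that decreases steeply on $[-\sqrt 2,\sqrt 2]$, so that the $y^{2}$-coefficient $bG'(x)+W'(x)-a\alpha(k)(x^{2}-1)$ is negative everywhere (see \eqref{eq1-W-fn-condition}--\eqref{eq1-y^2-estimate}); it then sets $\wdt V=e^{F-\inf F}$, which yields $\A\wdt V/\wdt V\to-\infty$ and hence both \eqref{eq-drfit-condition} and \eqref{eq-LDP-condition} in one stroke. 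Note also that the paper's $U(x,k)=\alpha(k)(|x|^{3}/3-|x|)$ is deliberately cubic with coefficient $b/3<1$, precisely so that the exponentiated regime gap is dominated by the factor $e^{-|x|^{3}}$ in $q_{12}$ (inequality \eqref{eq-q_kl-estimate}); your quartic gap destroys this cancellation. Any repair of your argument will need such a compactly supported corrector in the $y$-linear term and a growth-matched (at most cubic) regime-dependent term before exponentiating.
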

\para{Proof.}
%\chao{This proof essentially follows the proof of Theorem 3.1 of \cite{Wu-01}. Maybe we shall simplify the proof once we can check the   details.}
%\fubao{\bf \blue Yes, we can simplify the proof.}
Note that  Assumptions~\ref{sgcvq}  and \ref{Qirreducible} hold. Thus by Theorem \ref{thm-exp-ergodicity}, the desired exponential ergodicity will follow if we can verify  condition  \eqref{eq-drfit-condition}.  To this end, denote the Hamiltonian $H(x,y,k): = \frac{1}{2} y^{2} + V(x,k)=  \frac{1}{2} y^{2} +\frac{1}{2}\beta(k) x^{2}$ and consider the function \beq{F}F(x,y,k): = a H(x,y,k) + (b G(x) + W(x)) y + b U(x,k),\eeq  where $a,b$ are positive constants to be determined, $W$ is a smooth function with compact support to be specified later,  the function $G(x)$ is infinitely differentiable such that
\begin{equation}\label{eq-fn-G}
G(x)=\frac{x}{|x|} \, \, \hbox{for}  \, \, |x|>1  \, \,
\hbox{and}  \, \, |G(x)|\le 1  \, \, \hbox{for}  \, \, x \in
\R ,
\end{equation} and the function $U(x,k)$ is twice differentiable in $x$ such that
\begin{equation}\label{eq-fn-U}
U(x,k)=\alpha (k) \biggl(\frac{|x|^{3}}{3}-|x|\biggr) \, \,
\hbox{for} \, \, |x|>1 \, \, \hbox{and} \, \, k \in \{1,2\}.
\end{equation} Clearly the function $F$ is bounded below and satisfies $\lim_{|x| \vee |y| \to\infty} F(x,y) = \infty$.
Now, %as in the proof of  \cite[Theorem 3.1]{Wu-01},
set
\begin{equation}\label{eq-V-tilde}
\wdt{V}(x,y,k)=\exp \biggl(F(x,y,k) - \inf_{(x,y,k) \in \R\times \R\times \{1,2\}} F(x,y,k)\biggr). \end{equation} Clearly, $\wdt{V}(x,y,k) \ge 1$ is a norm-like function. Moreover,
for the operator ${\mathcal{A}}$ defined in \eqref{A}, straightforward computations reveal that
\begin{align}\label{eq-AV/V-estimate}
\nonumber &\frac{\A \wdt V(x,y,k)}{ \wdt V(x,y,k)} \\
\nonumber &\   = \LL_{k} F(x,y,k) + \frac12 |\sigma(x,y,k) F_{y}(x,y,k)|^{2} + q_{k,3-k}(x,y)\Biggl[\frac{ \wdt V(x,y,3-k)}{ \wdt V(x,y,k)} -1\Biggr]\\
\nonumber &\  = \big[bG'(x) + W'(x)- a \al(k) (x^{2}-1)\big] y^{2}  + \frac12\sigma^{2}(x,y,k) \big[ a^{2}+ \big(a y+ bG(x) +W(x)\big)^{2} \big]  \\
\nonumber  & \ \quad - \beta(k)\big(bG(x) + W(x)\big) x    + \big[ b U'(x,k)  - (bG(x) + W(x)) \al(k) (x^{2}-1)\big] y \\
  & \ \quad + q_{k,3-k}(x,y) \big[\exp\{a(V(x, 3-k) - V(x, k) )+ b (U(x,3-k) - U(x,k)) \} -1\big].
\end{align}
Let us analyze each term on the right-hand side of \eqref{eq-AV/V-estimate} carefully. First we %fix some $\e > 0$ and then
fix a positive number $a \in (0, \frac{1}{\hat\sigma^{2}})$, where $\hat\sigma$ is the positive constant in Assumption \ref{sgcvq} (iii).  Note that $a - \hat\sigma^{2}a^{2}  > 0$. Now we choose some $b	 \in (0, 1)$ sufficiently small so that
\begin{equation}
\label{eq-b*G'(x)-small}
b \sup_{x\in \R} |G'(x) | < \frac14\bigl(a -  \hat\sigma^{2}a^{2}   \bigr).
\end{equation}
Next we choose a sufficiently smooth function $W$ with compact support satisfying the following conditions:
\begin{align}
\label{eq1-W-fn-condition}
  b G'(x) + W'(x)  & \le -2a -    \hat\sigma^{2}a^{2},  & \forall x\in\big[-\sqrt 2, \sqrt 2\big],  \\
\label{eq2-W-fn-condition}      W'(x) & \le \frac14\bigl(a -  \hat\sigma^{2}a^{2}   \bigr), & \forall x \in \R.
\end{align}
Note that $\al(k) (x^{2}-1) \ge -2$ for all $(x,k)\in \R\times \{1,2\}$ and $\al(k) (x^{2}-1) \ge 1$ for all $(x,k)\in (\R\setminus[-\sqrt 2,\sqrt 2])\times \{1,2\}$. These observations, together with  \eqref{eq-b*G'(x)-small}, \eqref{eq1-W-fn-condition} and \eqref{eq2-W-fn-condition}, lead to
\begin{align}
\label{eq1-y^2-estimate}
\nonumber &[bG'(x) + W'(x)- a \al(k) (x^{2}-1)] y^{2} \\
\nonumber & \ \ \le \one_{\{|x| \le\sqrt 2\}} y^{2} (-2a -a^{2}\hat\sigma^{2} + 2a) +  \one_{\{|x| > \sqrt 2\}} y^{2} (\frac12 (a- a^{2}\hat\sigma^{2} ) -a)\\
& \ \ =- \one_{\{|x| \le\sqrt 2\}} a^{2}\hat\sigma^{2}y^{2} - \one_{\{|x| > \sqrt 2\}} \frac{ a+ a^{2}\hat\sigma^{2} }{2}  y^{2}.
\end{align}
Next we use Assumption \ref{sgcvq} (iii) and the elementary inequality $(x+y)^{2}\le (1+ \delta) x^{2} + (1+ \frac1\delta) y^{2}$  ($\delta >0$) to compute
\begin{align}
\label{eq1-sig*ay}\begin{aligned}
    & \frac12 a^{2}\sigma^{2}(x,y,k)  + \frac12 \sigma^{2}(x,y,k)(a y + bG(x) +W(x))^{2}\\&  \ \  \le   \frac12 a^{2 }\hat\sigma^{2}  + \frac12 \hat\sigma^{2}  \bigg[ a^{2}\bigg(1+\frac{1 }{2}\biggr)y^{2} + \big(1+2\bigr)|bG(x) + W(x)|^{2}\bigg].\end{aligned}  %  \one_{\{|x| \le \sqrt 2 \}}
\end{align}
Note that when $|x| > 1$ and $x\not\in \text{supp}(W)$, we have $W(x) =0$, $G(x) =\text{sgn}(x)$ and hence \begin{displaymath}
b U'(x,k) -(bG(x) + W(x) ) \alpha(k) (x^{2}-1) = \alpha(k) b (x^{2}-1)\text{sgn}(x) - b\al(k) (x^{2}-1) \text{sgn}(x) =0.
\end{displaymath} Therefore it follows that there exists a positive constant $M$ such that
\begin{equation}
\label{eq-y-1st-order-estimate}
[ b U'(x,k)  - (bG(x) + W(x)) \al(k) (x^{2}-1)] y \le M |y|, \quad \forall (x,y) \in \R^{2}.
\end{equation} % \fubao{I have added the following sentence.} Similarly,
Note that $G(x) x = |x|$ when $|x|> 1$ and that $W$ has compact support. Thus  it follows that \begin{equation}\label{bGW}- \beta(k) (bG(x) + W(x)) x \le - b |x| \one_{\{|x| > 1 \}}+  K_{1} < \infty, \end{equation} where $K_{1}$ is a positive constant.
Thanks to the definitions of the functions $Q(x,y)$,  $V$ and $U$, we have when $|x| > \sqrt 2$,
\begin{align*}
    q_{12}(x,y) & [\exp\{a(V(x, 2) - V(x, 1) )+ b (U(x,2) - U(x,1)) \} -1] \\ & =  \exp\bigg\{ -\bigg(1-\frac{b}{3}\bigg) |x|^{3} - \frac{ a}{2} x^{2} - b|x|\bigg\} - \exp\{-|x|^{3}\} , \\
     q_{21}(x,y) & [\exp\{a(V(x, 1) - V(x, 2) )+ b (U(x,1) - U(x,2)) \} -1] \\ &  = \frac{\wdt{H}}{|x|^{2}+|y|^{2}+1} \bigg[ \exp\bigg\{ -\frac{b}{3} |x|^{3} + \frac{a}{2} x^{2} + b |x| \bigg\} -1\bigg].
\end{align*}
Note that the right-hand sides of the above equations are uniformly bounded.  Consequently it follows that for all $ (x,y,k) \in\R\times \R \times \{1,2\}$, we have\begin{equation}
\label{eq-q_kl-estimate}
q_{k,3-k}(x,y) [\exp\{a(V(x, 3-k) - V(x, k) )+ b (U(x,3-k) - U(x,k)) \} -1] \le K_{2} < \infty,
\end{equation} where $K_{2}$ is a positive constant.

Finally we plug \eqref{eq1-y^2-estimate}, \eqref{eq1-sig*ay}, \eqref{eq-y-1st-order-estimate}, \eqref{bGW}, and \eqref{eq-q_kl-estimate} into \eqref{eq-AV/V-estimate} to obtain
\begin{align*}
\frac{\A \wdt V(x,y,k)}{ \wdt V(x,y,k)} & \le   \one_{\{|x| \le\sqrt 2\}} \bigg( \frac34 a^{2}\hat\sigma^{2} - a^{2}\hat\sigma^{2}  \bigg)y^{2}  + \one_{\{|x| > \sqrt 2\}} \bigg( \frac34 a^{2}\hat\sigma^{2}- \frac{ a+ a^{2}\hat\sigma^{2} }{2}    \bigg)  y^{2} \\
     & \ \ \ + M |y| -b |x|\one_{\{ |x| > 1\}}+ K_{3} \\
& =  - \one_{\{|x| \le\sqrt 2\}}\frac 14 a^{2}\hat\sigma^{2} |y|^{2} - \one_{\{|x| > \sqrt 2\}} \frac14(2a - a^{2}\hat\sigma^{2}) |y|^{2}  + M |y| -b |x|\one_{\{ |x| > 1\}}+ K_{3},
\end{align*} where $K_{3}$ is a positive constant.  Recall that $a \in (0, \frac{1}{\hat\sigma^{2}})$ and hence $(2a - a^{2}\hat\sigma^{2}) > 0$. Then it follows that $$\lim_{|x| + |y| \to \infty} \frac{\A \wdt V(x,y,k)}{ \wdt V(x,y,k)} = - \infty$$ for each $k \in \{1,2\}$.  % Therefore there exists a compact set $K \subset \R^{2}$ and positive constants $c,d$ such that \begin{displaymath} \frac{\A \wdt V(x,y,k)}{ \wdt V(x,y,k)}  \le -c \one_{K^{c}}(x,y) + d \one_{K}(x,y), \quad \forall (x,y,k) \in\R\times \R \times \{1,2\}. \end{displaymath}
 This of course implies the drift condition \eqref{eq-drfit-condition} and hence the desired  exponential ergodicity for \eqref{(3.7a)} follows. In addition, in view of  Proposition \ref{prop-LDP}, the large deviation principles are satisfied.  \qed \end{Example}

\begin{Example}\label{new-exm}
% \chao{We can also consider the Hamiltonian system with damping \begin{displaymath}
%\begin{cases}
% \d X(t) = Y(t) \d t,  \\
%  \d Y(t) = -[\frac{\La(t)}{2} Y(t) + \nabla_{x}  V(X(t),  \La(t)) ]\d t + \d W(t).
%\end{cases}\eqno{(*)}
%\end{displaymath} But I don't know how to prove that the system is strong Feller. Without regime switching, this is also called a Langevin equation.}
%\fubao{For the previous footnote, we know that the system
%\begin{displaymath}
%\begin{cases}
% \d x(t) = y(t) \d t,  \\
%  \d y(t) = \d W(t),
%\end{cases}\eqno{(**)}
%\end{displaymath} is strong Feller. Then, we could derive that the system (*) is also strong Feller by the Girsanov formula as in \cite{Wu-01}. Is it right?}
%\chao{I still don't know how to deal with the Hamilton system (*). Also, I don't know whether \eqref{eq:subsystem-2} is exponentially ergodic. I am wondering whether the assertions concerning  \eqref{eq:subsystem-2} are known in the literature: the calculations are elementary.}
%\fubao{Before I thought that we can deal with the Hamilton system (*) by using the strong Feller result for the system of (**) via the Girsanov formula. But I have not written out the details and I do not know what's the problem.} \fubao{I do not know the exponentially ergodicity of \eqref{eq:subsystem-2}. Let us think over it.}
We consider the following overdamped Langevin equation subject to regime switching: %  regime-switching Hamiltonian system:
\begin{equation}
\label{eq:exm2-Langevin}
\d X(t) = - \nabla_{x} V(X(t),  \La(t))  \d t + \d W(t),
\end{equation} in which $W$ is a 1-dimensional standard Brownian motion, the potential is given by $$V(x,1) = \frac{x^{4}}{4}, \quad V(x,2) : =  (x^{2}+1)\one_{\{|x| \le 1 \}} + 2|x|\one_{\{|x| > 1 \}},
   %  \begin{cases}  (x^{2}+1) &\text{ if } |x| \le 1,\\ 2|x| & \text{ if }|x| > 1,\end{cases} % \quad k \in  \{2,3,\dots\},
$$ and $\La \in \ss= \{1,2\} $ is the switching component with formal generator $Q(x) = (q_{kl}(x))$:
%\begin{equation} \label{exm2:La} \P(\La(t+\Delta) =j | \La(t) =k, X(t) =x) = \one_{\{k=j\}} + q_{kj}(x) \Delta + o(\Delta), \end{equation} uniformly in $\R$, provided $\Delta \downarrow 0$, where
\begin{equation}\label{eq-Q(x)-exm2}
Q(x) = \begin{pmatrix}-1 & 1\\ |x| & -|x|\end{pmatrix}.
% q_{kj}(x) = \begin{cases} \frac{k}{3^{j}}|x|      & \text{ if } k\neq j, \\  - \big(\frac{k}{2}-\frac{k}{3^{k}}\big)|x|    & \text{ if } k=j.\end{cases}
\end{equation} % Note that the coefficients of \eqref{eq:exm2-Hamiltonian}--\eqref{eq-Q(x)-exm2} do not satisfy Assumption \ref{sgcvq}.
We can use Theorem 2.5 of \cite{XiYZ-19} to verify that the system \eqref{eq:exm2-Langevin}--\eqref{eq-Q(x)-exm2} has a unique non-explosive strong solution $(X,\La)$.
% \chao{We need to be careful here; the $q$-matrix $Q(x)$ does not satisfy (10) of \cite{XiYZ-19}. We may use the local Lipschitz condition to conclude the existence of a local solution, and then find a Lyapunov function $V$ satisfying $\A V \le cV+d$ for some positive constants $c,d$ to show that the solution is nonexplosive and hence global.}
 Since the diffusion coefficient obviously satisfy the uniform ellipticity condition, the process $(X,\La)$ is strong Feller. In addition, it is easy to see that the process $(X,\La)$ is irreducible.

Next we consider the function $U(x,k):= k x^{2} +1$ for $(x,k) \in \R\times \ss$. Detailed calculations reveal that  \begin{align*}
\frac{\A U(x,1)}{U(x,1)}  = \frac{-2 x^{4} +1 +x^{2}}{x^{2}+1},     \ \ \text{ and }\ \ \
\frac{\A U(x,2)}{U(x,2)}   =\begin{cases}
 \dfrac{-  8 x^{2}  +2   -  |x|^{3}}{2x^{2}+1}& \text{ if  }|x| \le 1, \\
    \dfrac{-8|x| +2  - |x|^{3}}{2x^{2}+1}  & \text{ if  }|x| > 1.
\end{cases}
\end{align*} In particular, we see that $\lim_{|x| + k \to \infty} \frac{\A U(x,k)}{U(x,k)}= -\infty.$ Consequently   we can apply Proposition \ref{prop-LDP}  to conclude that the overdamped Langevin system \eqref{eq:exm2-Langevin}--\eqref{eq-Q(x)-exm2} satisfies the LDPs in Proposition \ref{prop-LDP}.
% \chao{Unfortunately, this  fails when $x=0$: $\lim_{k\to \infty}  -\frac{\A W(0,k)}{W(0,k)} =-\infty$. Otherwise  we can apply Proposition \ref{prop-LDP} or Corollary 2.2 of \cite{Wu-01} to conclude that the overdamped Langevin system \eqref{eq:exm2-Langevin}--\eqref{exm2:La} satisfies the large deviation principle.} \fubao{Nevertheless, for the previous footnote, this maybe avoided by assuming $\ss$ being only a finite set.} \fubao{For the case of $\ss$ being an infinite set, we need to change the structure of the potential function $V(x,k)$.}

It is easy to see that the subsystem  \begin{equation}
\label{eq:subsystem-1}
\d X^{(1)}(t) = - \nabla_{x} V(X^{(1)}(t),  1)  \d t + \d W(t)
\end{equation} satisfies the LDPs by Proposition \ref{prop-LDP}.
Next we verify that the subsystem \begin{equation}
\label{eq:subsystem-2}
\d X^{(2)}(t) = - \nabla_{x} V(X^{(2)}(t),  2)  \d t + \d W(t)
\end{equation} does not satisfy the large deviation principle.

To simplify notation, let us write %  \eqref{eq:subsystem-2} as $\d X(t) = b(X(t)) \d t+ \d W(t),$ where
$b(x) = -\nabla_{x} V(x, 2)= -2x\one_{\{|x| \le 1\}} -2 \sgn(x) \one_{\{ |x| > 1\}}$. Note that $b$ is Lipschitz continuous and hence a unique strong solution $X^{(2)}(\cdot)$ to \eqref{eq:subsystem-2} exists.  In addition, one can verify directly that  $X^{(2)}(\cdot)$ is irreducible and strong Feller. Consequently by Theorem 3.4 of \cite{MeynT-92}, every compact subset of $\R$ is petite for the $\delta$-skeleton chain of $X^{(2)}(\cdot)$. Next let $\Psi \ge 1$ be a smooth function so that $\Psi(x) = e^{|x|}$ for $|x| \ge 1$. Straightforward calculations reveal that  for a sufficiently large $K>0$, we have $\LL \Psi(x) \le - \frac32 \Psi(x) + K$ for all $x\in \R$, 
 % \fubao{Since there is the function $\varphi$ in the previous inequality which is not exactly the drift condition (CD3) in \cite{MeynT-93III}. So I think we cannot apply Theorem 6.1 of \cite{MeynT-93III} here.} 
 where $\LL$ is the infinitesimal generator of $X^{(2)}$. Therefore  we can apply Theorem 6.1 of \cite{MeynT-93III} to conclude that $X^{(2)}(\cdot)$ is $\Psi$-exponentially ergodic. The unique   stationary distribution $\pi$ of $X^{(2)}(\cdot)$ is  given by the {\em speed measure}  (see, for example, Section 5.5 of \cite{Karatzas-S}): $$m(\d x)= C \exp\bigg\{\int_{0}^{x} 2b(y) \d y\bigg \}\d x =
    C [e^{-2x^{2}}\one_{\{|x| \le 1\}} +   e^{-4|x|+2}\one_{\{|x| > 1\}}]\d x, $$ where $C> 0$ is a constant so that $\int_{-\infty}^{\infty} m(\d x)   =1$.
    %Using the function $\Psi(x): = |x|^{p}$ for an arbitrary even integer $p \ge 2$, we can verify directly that $\LL \Psi(x) \le -p \varphi(\Psi(x)) + K$ for all $x\in \R$, where $\LL$ is the infinitesimal generator of $X^{(2)}$, $K $ is a positive constant,  and  $\varphi(x) = x^{\frac{p-1}{p}}$ is strictly concave with $\varphi(0) =0$ and $\lim_{x\to\infty} \varphi(x) = \infty$. Then Theorem 4.1 of \cite{Hairer-16} says that $X^{(2)}$ is sub-exponentially ergodic with
   %   \begin{displaymath} \| P_{t}(x,\cdot) - \pi(\cdot) \|_{\text{TV}} \le C (\Psi(x) t^{-p} + t^{-p +1}), \quad \forall x\in \R \text{ and } t\ge 0. \end{displaymath} for some $C> 0$;    here $P_{t}(x,\cdot)$ is the transition semigroup of $X^{(2)}(\cdot)$.

    Nevertheless, we will demonstrate that $X^{(2)}(\cdot)$  cannot be hyper-exponentially recurrent, i.e., \eqref{eq-LDP} fails. Consequently  $X^{(2)}(\cdot)$  does not satisfy the LDPs of Proposition \ref{prop-LDP}. To see this,  let $\mathbf K \subset \R$ be an arbitrary compact subset. We have either $\min\mathbf K < 0$ or $\max\mathbf K \ge 0$.

    If $k: = \min\mathbf K < 0$, then for any $z < k$ and $T > 0$, we have $\E_{z}[e^{\tau_{\mathbf K}(T)}] \ge \E_{z}[e^{\tau_{k}}]$, where $\tau_{k}:= \inf\{ t\ge 0: X^{(2)}(t) =k\}$ is the first passage time of $k$.  Since $b(x) \le 1$ for all $x\in \R$, we have from the comparison result (see, for example, Theorem VI.1.1  of \cite{IkedaW-89} or Proposition 5.2.18 of \cite{Karatzas-S}) that $\P_{z}\{ X^{(2)}(t) \le \wdt X(t), \forall 0\le t < \infty\} =1,$ where $\wdt X$ is the drifted Brownian motion $\wdt X(t) = z + t + W(t)$. This, in particular, implies that $\tau_{k} \ge \wdt\tau_{k}: = \inf\{ t\ge0: \wdt X(t) =k\}$ $\P_{z}$-a.s. Consequently we have
    \begin{displaymath}
\E_{z}[e^{\tau_{\mathbf K}(T)}] \ge \E_{z}[e^{\tau_{k}}] \ge \E_{z} [e^{\wdt\tau_{k}}] = \E_{0}[e^{\wdt\tau_{k-z}}],
\end{displaymath} where $\wdt\tau_{k-z}$ is the first passage time of $k-z> 0$ for the drifted Brownian motion $\wdh X(t) =  t + W(t)$ starting from $0$. According to Section 3.5.C of \cite{Karatzas-S}, $\wdt\tau_{k-z}$ has density function \begin{displaymath}
f(t) = \frac{k-z}{\sqrt{2\pi t^{3}}} \exp\bigg\{ -\frac{(k-z-t)^{2}}{2 t}\bigg\}, \qquad t > 0.
\end{displaymath} Then we can compute
\begin{align*}
  \E_{0}[e^{\wdt\tau_{k-z}}] & =\int_{0}^{\infty} e^{t} f(t) \d t   = \int_{0}^{\infty} \frac{(k-z) e^{k-z}}{\sqrt{2\pi t^{3}}}e^{\frac{t}{2} - \frac{(k-z)^{2}}{2t}} \d t\\ & > \int_{M_{1}}^{\infty} \frac{(k-z) e^{k-z}}{\sqrt{2\pi t^{3}}} e^{\frac{t}{4} }\d t \ge   \int_{M_{2}}^{\infty} \frac{(k-z) e^{k-z}}{\sqrt{2\pi}} e^{\frac{t}{8} }\d t  =\infty,
\end{align*} where $M_{1} : =\sqrt 2 (k-z) $ and $M_{2} > M_{1}$ is chosen so that  $e^{\frac{t}{8}} \ge t^{\frac32} $ for all $t\ge M_{2}$. This implies that $\E_{z}[e^{\tau_{\mathbf K}(T)}] =\infty$ as desired.
Similar arguments reveal that  if $\max\mathbf K \ge 0$, then $\E_{z}[e^{\tau_{\mathbf K}(T)}] =\infty $ for any $z >\max\mathbf K$.  Hence  we conclude that \eqref{eq:subsystem-2}  does not satisfy the LDPs of Proposition \ref{prop-LDP}. {\blue }
%This example demonstrates that even some subsystems do not satisfy the large deviations principle, the overall system does satisfy the large deviations principle due to regime switching.
\end{Example}

% \section{Two-time scale: fast switching}

% \fubao{Since we will not do the fast switching in this paper, we can summarize the result of this section as a {\bf \blue Proposition} and remove it to the previous section. After doing so, we delete this section.}

% \vskip 0.25 in
%\para{Acknowledgement.}
%We thank the referee and the editors for their comments and
%suggestions that lead to a much improved presentation. Our thanks
%also go to Professors Mu-Fa Chen, Zhen-Qing Chen, Feng-Yu Wang, and
%Liming Wu for various discussions, comments, and suggestions.

%%%%%%%%%%%%%%%%%%%%%%%%%%%%%%%%%%%%%%%%%%%%%%%%%%%%%%%%%%%%%%%%%%%%%%%%%%%%
%%%%The author is greatly indebted to the referee, whose very careful
%%%%comments and helpful suggestions on the earlier version of the paper
%%%%and whose introduction of Ref. \cite{YSZ2005} greatly improved the
%%%%quality of the paper.
%%%%%%%%%%%%%%%%%%%%%%%%%%%%%%%%%%%%%%%%%%%%%%%%%%%%%%%%%%%%%%%%%%%%%%%%%%%%

%\newpage
%\vskip 1.9 true cm

%%%%%%%%%%%%%%%%%%%%%%%%%%%%%
%%%%%%% References   %%%%%%%%
%%%%%%%%%%%%%%%%%%%%%%%%%%%%%

%%%%%%%%%%%%%%%%%%%%%%%%%%%%%%

%  \bibliography{/Users/zhu/GoogleDrive/refs}\end{document}

\end{document}